\numberwithin{equation}{section}
\newtheorem{theorem}{Theorem}[section]
\newtheorem{lemma}[theorem]{Lemma}
\newtheorem{corollary}[theorem]{Corollary}
\newtheorem{proposition}[theorem]{Proposition}
\theoremstyle{definition}
\newtheorem{remark}[theorem]{Remark}
\newtheorem{definition}[theorem]{Definition}
\newtheorem*{plan}{Plan of the paper}
\newtheorem*{acknow}{Acknowledgements}
\title[{Extremals for the fractional Sobolev inequality}]{Optimal decay of extremals for the \\ fractional Sobolev inequality}
\author[L.\ Brasco]{Lorenzo Brasco}
\author[S.\ Mosconi]{Sunra Mosconi}
\author[M.\ Squassina]{Marco Squassina}
\address[L.\ Brasco]{Aix-Marseille Universit\'e, CNRS
	\newline\indent
	Centrale Marseille, I2M, UMR 7373, 39 Rue Fr\'ed\'eric Joliot Curie
	\newline\indent
	13453 Marseille, France}
\email{lorenzo.brasco@univ-amu.fr}
\address[S.\ Mosconi \& M.\ Squassina]{Dipartimento di Informatica
	\newline\indent
	Universit\`a degli Studi di Verona
	\newline\indent
	Verona, Italy}
\email{sunrajohannes.mosconi@univr.it}
\email{marco.squassina@univr.it}
\subjclass[2010]{46E35, 35B40, 49K22}
\keywords{Fractional $p-$Laplacian, critical Sobolev embedding, extremals, decay}
\begin{document}
	
	\begin{abstract}
	We obtain the sharp asymptotic behavior at infinity of extremal 
	functions for the fractional critical Sobolev embedding.
	\end{abstract}

\maketitle

\begin{center}
	\begin{minipage}{11cm}
		\small
		\tableofcontents
	\end{minipage}
\end{center}

\medskip

\section{Introduction and main result}
\noindent
Let $N>p>1$.\ In two seminal papers, T.\,Aubin \cite{Au} and G.\,Talenti \cite{talenti} showed that the minimizers of the Sobolev quotient
\begin{equation}
\label{opt-local}
\mathcal{S}_{p}=\inf_{u\in D^{1,p}(\mathbb{R}^N)\setminus\{0\}} \frac{\|\nabla u\|^p_{L^p(\mathbb{R}^N)}}{\displaystyle\left(\int_{\mathbb{R}^N} |u|^\frac{N\,p}{N-p}\, dx\right)^\frac{N-p}{N}},
\end{equation}
are given by the family of functions 
\begin{equation}
\label{aubintalenti}
U_{t}(x)={\mathcal C}\,t^\frac{p-N}{p}\,U\left(\frac{x-x_0}{t}\right),
\qquad {\mathcal C}\in\mathbb{R}\setminus\{0\},\ t>0,\ x_0\in\mathbb{R}^N,
\end{equation}
where
\[
U(x)=\mathcal{C}_{N,p}\,\big(1+|x|^{\frac{p}{p-1}}\big)^{\frac{p-N}{p}},\qquad\qquad \mathcal{C}_{N,p}=\left(\int_{\mathbb{R}^N}\big(1+|x|^{\frac{p}{p-1}}\big)^{-N}\,dx\right)^\frac{N\,p}{p-N}.
\]
For the limit case $p=1$, the problem was investigated by H.\,Federer and W.\,H.\,Fleming
in \cite{federer} and by V.\,G.\,Maz’ya in \cite{mazia}.
\par
On one side, these results establish an enlightening  connection between the theory of Sobolev spaces and the theory of classical isoperimetric inequalities.
On the other side, they provide a very powerful tool for the study of second order partial differential equations involving nonlinearities reaching
the critical growth with respect to the Sobolev embedding.
In the case $p=2$, these classification results were formally derived by G.\,Rosen in \cite{Ro}.
\par
The variational problem \eqref{opt-local} is related to the following equation involving the $p-$Laplace operator $\Delta_p u={\rm div}(|\nabla u|^{p-2}\nabla u)$,
\begin{equation}
\label{pEnt-loc}
-\Delta_p u=|u|^{\frac{N\,p}{N-p}-2}\,u, \qquad \mbox{ in }\mathbb{R}^N. 
\end{equation}
In fact, a nontrivial problem is that of proving that the only {\em fixed sign} solutions of this equation are precisely given by \eqref{aubintalenti}, for a suitable choice of the constant.
\par
In the restricted class of {\em radially symmetric fixed sign} solutions to \eqref{pEnt-loc},
this was shown by M.\,Guedda and L.\,Veron in \cite{GueddaV}. Recently, in \cite[Corollary 1.3]{Vetois} for the case $1<p\leq 2\,N/(N+2)$,
in \cite[Theorem 1.2]{ClassDino} for the case $2\,N/(N+2)<p\leq 2$ and in \cite[Theorem 1.1]{sciunziCl} for the case $2<p<N$, it was proved that any positive weak solution to \eqref{pEnt-loc} is radially symmetric and radially decreasing about some point,
thus answering positively to the classification of constant sign solutions to \eqref{pEnt-loc}. 
\par
The result by Aubin and Talenti, as well as the previous results in the linear case $p=2$, strongly rely on the reduction of the problem
 to an {\em ordinary differential equation} which can be explicitly solved. We recall that more recently, the Aubin-Talenti result has been reproved in \cite[Theorem 2]{CENV} by means of very different techniques, based on Optimal Transport.
\vskip.2cm
\noindent
Let now $s\in (0,1)$, $p>1$ and $N>s\,p$. The goal of this paper is to provide information about the asymptotic behavior at infinity of
optimizers of the problem
\begin{equation}
\label{costanteintro}
\mathcal{S}_{p,s}:=\inf_{u\in D^{s,p}(\mathbb{R}^N)\setminus\{0\}} \frac{\displaystyle\int_{\mathbb{R}^{2N}}\frac{|u(x)-u(y)|^p}{|x-y|^{N+s\,p}}\,dx\,dy}{\displaystyle\left(\int_{\mathbb{R}^N} |u|^\frac{N\,p}{N-s\,p}\,dx\right)^\frac{N-s\,p}{N}},
\end{equation}
which is related to the {\it fractional} Sobolev embedding, see for example \cite[Theorem 1]{MS}. Here 
\[
D^{s,p}(\mathbb{R}^N)=\left\{u\in L^{{N\,p/(N-s\,p)}}(\mathbb{R}^N):
\int_{\mathbb{R}^{2N}}\frac{|u(x)-u(y)|^p}{|x-y|^{N+s\,p}}\,dx\,dy<\infty\right\}.
\]
In the limit case $p=1$, the sharp constant above has been determined in \cite[Theorem 4.1]{FS} 
(see also \cite[Theorem 4.10]{BraLinPar}). The relevant extremals are given by characteristic functions of balls, exactly as in the local case.
\par
Problem \eqref{costanteintro} for $p>1$ is now related to the study of the nonlocal integro-differential equation 
\begin{equation}
\label{quazione}
(-\Delta_p)^s u=|u|^{\frac{N\,p}{N-s\,p}-2}\,u,\qquad \mbox{ in }\mathbb{R}^N, 
\end{equation}
where, formally, the operator $(-\Delta_p)^s$ is defined on smooth functions as
\[
(-\Delta_p)^s u(x) = 2\, \lim_{\varepsilon \searrow 0} \int_{\mathbb{R}^N \setminus B_\varepsilon(x)} \frac{|u(x) - u(y)|^{p-2}\, (u(x) - u(y))}{|x - y|^{N+s\,p}}\, dy, \qquad x \in \mathbb{R}^N.
\]
This operator appears in some recent works like \cite{AMRT} and \cite{IN}. See also \cite{DKP2,ILPS,IMS,IMS2,KMS1} and the references therein for some existence and regularity results.
\par
In the Hilbertian case $p=2$, it is known by \cite[Theorem 1.1]{Cotsiolis} that the family of functions
\begin{equation}
\label{cotsiolis}
U_{t}(x)={\mathcal C}\,t^\frac{2\,s-N}{2}\,\left(1+\left(\frac{|x-x_0|}{t}\right)^2\right)^{\frac{2\,s-N}{2}},\qquad \mathcal{C}\in\mathbb{R}\setminus\{0\},\ t>0,\ x_0\in\mathbb{R}^N,
\end{equation}
is the only set of minimizers for the best Sobolev constant $\mathcal{S}_{2,s}$. More precisely, in \cite[Theorem 1.1]{Cotsiolis} it is proved that the family \eqref{cotsiolis} provides all the minimizers of the following problem
\[
\widetilde{\mathcal{S}}_{2,s}:=\inf_{u\in D^{s,2}(\mathbb{R}^N)\setminus\{0\}} \frac{\big\|(-\Delta)^{s/2}\,u\big\|_{L^2(\mathbb{R}^N)}^2}{\displaystyle\left(\int_{\mathbb{R}^N} |u|^\frac{2\,N}{N-2\,s}\,dx\right)^\frac{N-2\,s}{N}},
\]
where the $L^2$ norm of $(-\Delta)^{s/2} u$ is defined in terms of the Fourier transform. By using \cite[Proposition 3.6]{Guida}, one knows that
\[
\int_{\mathbb{R}^{2N}}\frac{|u(x)-u(y)|^2}{|x-y|^{N+2\,s}}\,dx\,dy=c\, \big\|(-\Delta)^{s/2}\,u\big\|_{L^2(\mathbb{R}^N)}^2,
\]
for some $c=c(N,s)>0$. This implies that \eqref{cotsiolis} are the only solutions of \eqref{costanteintro} as well.
\par
It is also known by \cite[Theorem 1]{CLO} that, for a suitable 
positive constant ${\mathcal C}={\mathcal C}(N,s)$, \eqref{cotsiolis} are the only positive solutions of 
\begin{equation}
\label{linear-crit}
(-\Delta)^s u=|u|^{\frac{2\,N}{N-2\,s}-2}\,u \qquad \mbox{ in }\mathbb{R}^N.
\end{equation}
The result in \cite{CLO} is based upon the full equivalence between the weak solutions to \eqref{linear-crit} and the integral formulation
\begin{equation}
\label{integral-f}
u(x)=\int_{\mathbb{R}^N}\frac{|u(y)|^{\frac{2\,N}{N-2\,s}-2}\,u}{|x-y|^{N-2\,s}}\,dy,\qquad u\in L^{\frac{2\,N}{N-2\,s}}(\mathbb{R}^N),
\end{equation}
on the validity of some Kelvin transform and on moving plane arguments applied to \eqref{integral-f}, in the spirit of \cite{lieb}. 
\vskip.2cm
Unfortunately, in the nonlocal and nonlinear case $p\neq 2$ there is {\em no} Kelvin transform and {\em no}
equivalent integral representation result. 
Furthermore, even restricting to the class of radially symmetric functions, establishing a classification result for the optimizers
of \eqref{costanteintro} seems very hard. 
We conjecture that 
the optimizers are given by
\begin{equation}
\label{pTal}
U_{t}(x)={\mathcal C}\,t^\frac{s\,p-N}{p}\,U\left(\frac{x-x_0}{t}\right),
\qquad {\mathcal C}\in\mathbb{R}\setminus\{0\},\ t>0,\ x_0\in\mathbb{R}^N,
\end{equation}
where this time
\begin{equation}
\label{famloc}
U(x):=\mathcal{C}_{N,p,s}\,\big(1+|x|^{\frac{p}{p-1}}\big)^{\frac{s\,p-N}{p}},\qquad\qquad \mathcal{C}_{N,p,s}=\left(\int_{\mathbb{R}^N}\big(1+|x|^{\frac{p}{p-1}}\big)^{-N}\,dx\right)^\frac{N\,p}{s\,p-N}.
\end{equation}
Notice that \eqref{pTal} and \eqref{famloc} are consistent with the cases $p=2$ or $s=1$, in the last case we are back to the family of Aubin-Talenti functions \eqref{aubintalenti}
for the $p-$Laplacian operator.
\par
In the main result of this paper, we prove that extremals for \eqref{costanteintro} have exactly the decay rate at infinity dictated by formula \eqref{famloc}. Namely, we have the following.
\begin{theorem}
\label{sharp-Dec}
Let $U\in D^{s,p}(\mathbb{R}^N)$ be any minimizer for \eqref{costanteintro}. 
Then $U\in L^\infty(\mathbb{R}^N)$ is a constant sign, radially symmetric and monotone function with
\begin{equation}
\label{sciarpa2}
\lim_{|x|\to\infty}|x|^{\frac{N-s\,p}{p-1}}U(x)=U_\infty,
\end{equation}
for some constant $U_\infty\in\mathbb{R}\setminus\{0\}$.
\end{theorem}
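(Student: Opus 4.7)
The plan is to proceed in three stages: first reduce the problem to a positive, radially monotone, bounded minimizer; then establish matched two-sided decay of the correct order $|x|^{-\alpha}$, with $\alpha=(N-sp)/(p-1)$; finally upgrade this to the existence of a unique asymptotic constant $U_\infty$, which is where I expect the bulk of the work to concentrate.

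\textbf{Step 1 (qualitative reduction).} Since $\big||a|-|b|\big|\le |a-b|$, the map $U\mapsto |U|$ does not increase the Gagliardo seminorm and preserves the $L^q$ norm with $q=Np/(N-sp)$, so any minimizer may be taken nonnegative; the Euler--Lagrange equation $(-\Delta_p)^s U=\lambda\,U^{q-1}$ and the strong minimum principle for the fractional $p$-Laplacian then force $U>0$. For radial symmetry and monotonicity I would invoke the fractional P\'olya--Szeg\H{o} inequality on the Gagliardo seminorm: the symmetric decreasing rearrangement $U^\ast$ is an admissible competitor, and the rigidity statement in the equality case gives that $U$ is, up to translation, equal to $U^\ast$. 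The $L^\infty$ bound follows from a Moser-type iteration on the critical equation, by now standard in the fractional $p$-Laplacian literature, starting from the critical integrability $U\in L^q$; together with radial monotonicity this yields $U(x)\to 0$ as $|x|\to\infty$.

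\textbf{Step 2 (sharp two-sided decay).} With $\alpha=(N-sp)/(p-1)$, the homogeneity of the fundamental solution of $(-\Delta_p)^s$, the goal is
\[
\frac{c}{|x|^{\alpha}}\le U(x)\le \frac{C}{|x|^{\alpha}},\qquad |x|\ge 1.
\]
A crude decay $U(x)\lesssim |x|^{-N/q}$ comes for free from $U\in L^q$ and radial monotonicity. For the upper bound I would construct a radial supersolution modelled on the conjectured profile $(1+|x|^{p/(p-1)})^{-\alpha(p-1)/p}$: an explicit pointwise computation of $(-\Delta_p)^s$ on this barrier for large $|x|$, combined with the crude decay and the smallness of $U$ at infinity, allows the weak comparison principle for $(-\Delta_p)^s$ to bootstrap the decay up to order $|x|^{-\alpha}$. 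The matching lower bound exploits the nonlocal character of the operator: the mass of $U^{p-1}$ concentrated near the origin produces, for $|x|$ large, a contribution of order $|x|^{-(N+sp)}$ to $(-\Delta_p)^s U(x)$, which through the equation forces $U(x)\gtrsim |x|^{-\alpha}$.

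\textbf{Step 3 (existence of the limit, main obstacle).} With matched decay at hand, consider the rescalings $U_R(x):=R^{\alpha}\,U(Rx)$, which are uniformly bounded on $\{|x|\ge \rho\}$ and solve $(-\Delta_p)^s U_R=R^{-sp/(p-1)}\,U_R^{q-1}$, whose right-hand side tends to zero locally uniformly away from the origin. Fractional H\"older estimates give compactness of $\{U_R\}$, and any subsequential limit is positive, radial, $s$-$p$-harmonic on $\mathbb{R}^N\setminus\{0\}$, vanishing at infinity, and $\alpha$-homogeneous; a Liouville-type classification then forces such a limit to be $c\,|x|^{-\alpha}$ for some $c>0$. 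The difficult point is to show that $c$ is independent of the subsequence. In the local problem \eqref{pEnt-loc} this follows from ODE analysis, and in the Hilbertian fractional case from the Kelvin transform and the integral representation \eqref{integral-f} as in \cite{CLO}; neither tool is available here. Identifying $U_\infty$ uniquely seems to require delicate one-sided nonlocal comparison arguments along the radial direction, tailored to the critical scaling of the equation, and this is where I anticipate the main analytic effort.
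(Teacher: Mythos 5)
Your Steps 1 and 2 track the paper's preparatory work reasonably well: constant sign and radial monotonicity via the rigidity in the P\'olya--Szeg\H{o} inequality, boundedness via Moser-type iteration, an upper decay bound from a radial lemma applied to a weak-$L^{q_0}$ estimate, and a lower decay bound from a barrier built on the fundamental-solution profile $|x|^{-\alpha}$ together with the comparison principle. The paper implements the upper bound via a borderline Lorentz estimate (Proposition~\ref{lp-weak}) plus a Radial Lemma (Lemma~\ref{lm:border}), and the lower bound via an auxiliary capacitary problem \eqref{cappot} and the truncated profile $\widetilde\Gamma$, but the spirit is the same as yours.

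Step 3 is where you have a genuine gap, and you flag it yourself, but the gap is deeper than ``identifying the constant.'' Your blow-down argument already stalls before that point: (i) compactness of the rescalings $U_R$ does not by itself give that a subsequential limit is $\alpha$-homogeneous --- for local problems this comes from a monotonicity formula, and no such tool is available for $(-\Delta_p)^s$ with $p\ne 2$; (ii) you invoke a Liouville-type classification of positive radial $s$-$p$-harmonic functions on $\mathbb{R}^N\setminus\{0\}$, which is neither established in the literature nor proved here --- Theorem~\ref{fundamental} only shows that $\Gamma$ \emph{is} such a solution, not that it is the only one; and (iii) even granting (i) and (ii), subsequence-independence of $c$ remains open. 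The paper sidesteps all three obstacles by never passing to a limit: it sets $m=\liminf U/\widetilde\Gamma$ and $M=\limsup U/\widetilde\Gamma$, assumes $M>m$, and reaches a contradiction by an explicit barrier construction. The barriers are step functions times $\widetilde\Gamma$ on dyadic annuli (Lemma~\ref{lm:intervalli} controls how fast the ratio $U/\widetilde\Gamma$ can change across an annulus), and the decisive point is that the nonlocal contribution of the annulus where $U/\widetilde\Gamma\approx (M+m)/2$ is quantitatively large enough --- via the elementary inequalities \eqref{27} for $p>2$ and \eqref{ineqb} for $p<2$ --- to strictly raise the liminf (resp.\ lower the limsup), contradicting its definition. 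This is a direct oscillation argument that requires no classification theorem and no compactness; it is what actually closes the proof, and your proposal would need to be completed along similar lines.
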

\begin{remark}
\label{referee}
As it will be apparent from the proof of Theorem \ref{sharp-Dec}, the same conclusion \eqref{sciarpa2} can be drawn for any constant sign, radially symmetric and monotone solution of the critical equation \eqref{quazione}. In the local case this property is a plain consequence of the aforementioned classification result of \cite{GueddaV}.
\end{remark}
The building blocks of Theorem~\ref{sharp-Dec} are a weak $L^q$ estimate for the minimizers (Proposition \ref{lp-weak}), a Radial Lemma for Lorentz spaces (Lemma \ref{lm:border}) and the fact that the function
$$
\Gamma(x):=|x|^{-\frac{N-s\,p}{p-1}},\quad x\in\mathbb{R}^N\setminus\{0\},
$$ 
is a weak solution of $(-\Delta_p)^su=0$ in $\mathbb{R}^N\setminus B_r$, for any $r>0$ (Theorem \ref{fundamental}). Then the crucial point will be constructing  
suitable barrier functions to be combined with a version of the comparison principle for $(-\Delta_p)^s$
recently obtained in \cite{IMS}. 
Observe that for $s=1$, the function $\Gamma$ above is nothing but the fundamental solution of the $p-$Laplacian.
\par 
We wish to stress that Theorem~\ref{sharp-Dec} also provides  a very
useful tool for the investigation of {\em existence} of weak solutions for the nonlocal Brezis-Nirenberg problem in 
a smooth bounded domain $\Omega\subset\mathbb{R}^N$, i.e.
\[
\begin{cases}
(-\Delta_p)^s\, u  =\lambda\,|u|^{p-2}\,u+|u|^{\frac{N\,p}{N-s\,p}-2}u & \mbox{ in } \Omega, \\
u  = 0 & \mbox{ in }\mathbb{R}^N \setminus \Omega,
\end{cases}
\]
where $\lambda> 0$. This problem has been studied in \cite{SV} for $p=2$. For a general exponent $1<p<N/s$, by means of \eqref{sciarpa2}, one can estimate truncations of $U_t$
via a suitable cut-off function in terms of the sharp
constant $\mathcal{S}_{p,s}$ {\em without knowing the explicit form} of the optimizers. Such a procedure is new even for the local case.
These estimates allow to apply mountain pass or linking arguments by forcing
the min-max levels to fall inside a compactness range for the energy functionals, see \cite{per-mosc-squ-yang} for more details.

\begin{plan}
In Section \ref{sec:2} we set all the notations, definitions and basic facts that will be needed throughout the paper. Then in Section \ref{sec:3} we prove existence of solutions for \eqref{costanteintro}, together with some basic integrability properties. We also prove that extremals have to be comparable to 
$$
x\mapsto |x|^{-\frac{N-s\,p}{p-1}},\quad x\in\mathbb{R}^N\setminus\{0\},
$$ 
at infinity (Corollary \ref{lower-Dec}). Then the exact behavior \eqref{sciarpa2} is proved in Section \ref{sec:4}. The paper ends with Appendix \ref{sec:appA}, containing a rigourous computation of the fractional $p-$Laplacian of a power function. 
\end{plan}

\begin{acknow}
We warmly thank Yannick Sire for some informal discussions on the subject of this paper. We owe Remark \ref{referee} to the kind courtesy of an anonymous referee, we wish to thank him.
This research has been partially supported by {\it Gruppo Nazionale per l'Analisi Matematica, la Probabilit\`a e le loro Applicazioni} (INdAM) and by {\it Agence Nationale de la Recherche}, through the project ANR-12-BS01-0014-01 {\sc Geometrya}. 
Part of this paper was written during a visit of S.\,M. and M.\,S. in Marseille in March 2015. The I2M and FRUMAM institutions are gratefully acknowledged. 
\end{acknow}

\section{Preliminary results}

\label{sec:2}

\subsection{Notation}
In the following we will fix $s\in (0, 1)$, $p>1$ and $N$ as the dimension, letting for brevity
\[
p^*=\frac{N\,p}{N-s\,p}.
\]
We denote by $\omega_N$ the measure of the $N-$dimensional ball having unit radius.
Moreover, $\mathbf{S}^{N-1}$ will denote $\{x\in \mathbb{R}^N:|x|=1\}$. For $E\subseteq \mathbb{R}^N$ measurable we denote by $|E|$ its $N-$dimensional Lebesgue measure, by $E^c=\mathbb{R}^N\setminus E$ its complement and by $\chi_E$ its characteristic function. If $u:E\to \mathbb{R}$ is measurable we set
\[
[u]_{W^{s, p}(E)}^p:=\int_{E\times E}\frac{|u(x)-u(y)|^p}{|x-y|^{N+s\,p}}\, dx\, dy, \qquad [u]_{s,p}:=[u]_{W^{s,p}(\mathbb{R}^N)}, 
\]
and for any $q>0$
\[
\|u\|_{L^q(E)}:=\left(\int_{E}|u|^q\, dx\right)^{1/q},\qquad\|u\|_q:=\|u\|_{L^q(\mathbb{R}^N)}.
\]
Finally, for $t\in \mathbb{R}$ we will use the notation
\[
J_p(t)=|t|^{p-2}\,t.
\]

\subsection{Elementary inequalities}

We list here some useful inequalities on the function $J_p$. First, 
consider the case $p\geq 2$. We recall that 
\begin{equation}
\label{licciz}
|J_p(a)-J_p(b)|\le (p-1)\,(|a|^{p-2}+|b|^{p-2})\,|a-b|,\qquad a,b\in\mathbb{R},\quad p\ge 2,
\end{equation}
as a consequence of the mean value Theorem.
%Notice also that $J_p$ is convex on $[0, +\infty)$ 
%in this case, and the following consequences of convexity hold true
%\[
%|J_p(a+b)|\leq C_p(|a|^{p-1}+|b|^{p-1}), \qquad a, b\in \mathbb{R},\quad p\geq 2.
%\]
In \cite[eq. (2.7)]{IMS} it is also proved the following inequality
\begin{equation}
\label{27}
J_p(a)-J_p(a+b)\leq -2^{2-p}\,b^{p-1},\qquad a\in \mathbb{R}, \ b\geq 0,\quad p\geq 2.
\end{equation}
Let us consider the case $p\in (1,2]$. %The well-known subadditivity inequality reads 
%\[
%|J_p(a+b)|\leq |a|^{p-1}+|b|^{p-1},\qquad a, b\in\mathbb{R},\quad p\in (1,2],
%\]
%or 
%\[
%J_p(a+b)\leq J_p(a)+ J_p(a),\qquad a, b\geq 0,\quad p\in (1,2].
%\]
We recall the well-known monotonicity inequality
\begin{equation}
\label{seconda}
\big(J_p(a)-J_p(b)\big)\,(a-b)\ge c\,\frac{|a-b|^2}{(a^2+b^2)^\frac{2-p}{2}},\qquad a,b\in\mathbb{R}\setminus\{0\},\quad p\in(1,2].
\end{equation}
Next we prove the following inequality
\begin{equation}
\label{ineqb}
J_p(a)-J_p(a-b)\geq \max\left\{J_p(A)-J_p(A-b),\, \left(\frac{b}{2}\right)^{p-1}\right\},\quad a\in [0,A], b\geq 0,\ p\in (1,2].
\end{equation}
We distinguish two cases. First suppose that $a\geq b/2$. The function $t\mapsto J_p(t)-J_p(t-b)$ is readily seen to be decreasing on $[b/2, +\infty[$, so that 
\[
J_p(a)-J_p(a-b)\geq J_p(A)-J_p(A-b)
\]
in this case.
On the other hand, if $a<b/2$, being $J_p$ odd and increasing we have
\[
J_p(a)-J_p(a-b)\geq J_p(b-a)\geq J_p\left(\frac{b}{2}\right),
\]
and thus \eqref{ineqb} is proved.

\subsection{Functional framework}

We consider the space 
\[
D^{s,p}_0(\Omega):=\Big\{u\in L^{p^*}(\Omega): \text{$u\equiv 0$ in $\Omega^c$},\,\, [u]_{s,p}<+\infty\Big\},\qquad
D^{s,p}(\mathbb{R}^N):=D^{s,p}_0(\mathbb{R}^N),
\]
which is a Banach space with respect to the norm $[\,\cdot\, ]_{s,p}$.  Our first aim is to prove, under suitable regularity assumptions on $\partial\Omega$, that $C^\infty_c(\Omega)$ is dense in  $D^{s,p}_0(\Omega)$
with respect to the norm $[\,\cdot \, ]_{s,p}$. 
While this density result is well-known for $D^{s,p}_0(\Omega)\cap L^p(\Omega)$ (see for example \cite{FSV}), we will need to remove the $L^p$ assumption in the following. Finally we will prove a comparison principle in a rather general space.

\begin{theorem}
\label{density}
Let $\Omega\subset\mathbb{R}^N$ be an open set such that $\partial\Omega$ is compact and locally the graph of a continuous function.  Then $D^{s,p}_0(\Omega)$ is the completion of $C^\infty_c(\Omega)$ with respect to the norm $[\,\cdot\,]_{s,p}$.
\end{theorem}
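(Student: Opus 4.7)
The plan has two main ingredients: a topological reduction to simplify the geometry of $\Omega$, and a two-step approximation scheme to remove the missing $L^p$ hypothesis. Because $\partial\Omega$ is compact, choosing $R_0$ with $\partial\Omega\subset B_{R_0}$ and invoking the connectedness of $\mathbb{R}^N\setminus\overline{B_{R_0}}$, the set $\Omega\cap(\mathbb{R}^N\setminus\overline{B_{R_0}})$ is simultaneously open and closed in it, hence either empty or all of it. Thus $\Omega$ is either bounded or an exterior domain (the case $\Omega=\mathbb{R}^N$ being a simpler subcase of the latter). In the bounded case, $L^{p^*}(\Omega)\subset L^p(\Omega)$ gives $D^{s,p}_0(\Omega)\subset L^p(\Omega)$, and the claim follows directly from \cite{FSV}. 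Only the exterior case requires genuine work.

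Fix $u\in D^{s,p}_0(\Omega)$ and a radial cutoff $\eta\in C^\infty_c(\mathbb{R}^N)$ with $0\le\eta\le 1$, $\eta\equiv 1$ on $B_1$, $\operatorname{supp}\eta\subset B_2$, and set $\eta_R(x):=\eta(x/R)$, $u_R:=u\,\eta_R$. Then $u_R$ vanishes on $\Omega^c$, has support in $\overline{B_{2R}}$, and by H\"older lies in $L^p(\Omega)$ since $u_R\in L^{p^*}$ with bounded support. Once the seminorm convergence $[u_R-u]_{s,p}\to 0$ is established, each $u_R$ belongs to $D^{s,p}_0(\Omega)\cap L^p(\Omega)$, the density result of \cite{FSV} (which uses the local graph assumption on $\partial\Omega$) furnishes $\varphi_R\in C^\infty_c(\Omega)$ with $[u_R-\varphi_R]_{s,p}<1/R$, and a diagonal extraction concludes.

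The crux is the seminorm convergence. Writing $u-u_R=u(1-\eta_R)$ and using the Leibniz-type splitting
\[
u(1-\eta_R)(x)-u(1-\eta_R)(y)=(u(x)-u(y))(1-\eta_R(x))+u(y)(\eta_R(y)-\eta_R(x)),
\]
the first summand contributes a quantity vanishing by dominated convergence, with integrable majorant $|u(x)-u(y)|^p/|x-y|^{N+sp}$ and pointwise decay $\eta_R\to 1$. The delicate term is
\[
I_R:=\int_{\mathbb{R}^N}|u(y)|^p h_R(y)\,dy,\qquad h_R(y):=\int_{\mathbb{R}^N}\frac{|\eta_R(x)-\eta_R(y)|^p}{|x-y|^{N+sp}}\,dx,
\]
with the scaling identity $h_R(y)=R^{-sp}H(y/R)$, where $H\in L^1\cap L^\infty(\mathbb{R}^N)$ depends only on $\eta$. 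I would split $I_R=\int_{|y|\le T}+\int_{|y|>T}$ at a threshold $T\le R/2$: on $\{|y|\le T\}$ one has $\eta_R\equiv 1$ near $y$, so $h_R\le CR^{-sp}$, and H\"older yields $\int_{B_T}|u|^p\le C\|u\|_{p^*}^p T^{sp}$, for a total contribution $C(T/R)^{sp}\|u\|_{p^*}^p$; on $\{|y|>T\}$, H\"older with exponents $p^*/p$ and $N/(sp)$ together with the $R$-invariance $\|h_R\|_{L^{N/(sp)}}=\|H\|_{L^{N/(sp)}}$ (a direct change of variables) yields an $R$-independent bound $C\bigl(\int_{|y|>T}|u|^{p^*}\bigr)^{p/p^*}$, which vanishes as $T\to\infty$ by absolute continuity of the $L^{p^*}$-integral. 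Taking $T=\sqrt{R}$ and sending $R\to\infty$ drives both pieces to zero.

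The main obstacle is precisely this last step: a naive H\"older bound on the whole integral gives only $I_R\le C\|u\|_{p^*}^p\|H\|_{L^{N/(sp)}}$, which is bounded but not small, because the scaling factor $R^{N-sp}$ arising from the substitution $y=Rz$ exactly cancels the decay $\|u(R\,\cdot)\|_{p^*}^p=R^{-(N-sp)}\|u\|_{p^*}^p$. The spatial splitting sidesteps this by exploiting two complementary features: the uniform decay $h_R\le CR^{-sp}$ where $\eta_R$ is constant, and the $L^{p^*}$-absolute continuity at infinity. Together these bypass the missing $L^p$ bound on $u$ and secure the crucial seminorm convergence needed to reduce the problem to \cite{FSV}.
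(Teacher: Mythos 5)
Your proof is correct, but it takes a genuinely different route from the paper's. You truncate \emph{in space}: $u_R = u\,\eta_R$ immediately lies in $L^p$ because it has bounded support, and the real work is proving $[u-u_R]_{s,p}\to 0$ via the Leibniz split and the scaling/H\"older analysis of $h_R=R^{-sp}H(\cdot/R)$, with the threshold $T=\sqrt R$ balancing the two error terms. The paper instead truncates \emph{in range}: with $u\ge 0$, set $u_\varepsilon=(u-\varepsilon)_+$. Since $t\mapsto(t-\varepsilon)_+$ is a $1$-Lipschitz contraction, $|u_\varepsilon(x)-u_\varepsilon(y)|\le|u(x)-u(y)|$ with a.e.\ pointwise convergence, so dominated convergence gives $u_\varepsilon\to u$ in $D^{s,p}_0(\Omega)$ at once; then Chebyshev's inequality shows $\{u>\varepsilon\}$ has finite measure, and H\"older gives $u_\varepsilon\in L^p$. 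Both then invoke the FSV density result on $D^{s,p}_0\cap L^p$. The paper's vertical truncation is appreciably shorter — no topological dichotomy, no Leibniz split, no scaling estimate — because the $1$-Lipschitz contraction property hands over the seminorm convergence for free; your horizontal cutoff is the more standard PDE device and perhaps more transparently extends to other functionals, but here it costs the delicate estimate of $I_R$, precisely because a na\"ive H\"older bound is scale-invariant, as you correctly observed. One small remark: your topological reduction to bounded or exterior $\Omega$ is not actually needed — the cutoff argument works for any $\Omega$ with $\partial\Omega$ compact, since $u\eta_R$ always has bounded support and thus lies in $L^p$ — though it does no harm.
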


\begin{proof}
Let $u\in D^{s,p}_0(\Omega)$. Reasoning on  $u_+$ and $u_-$ separately (which still belong to $D^{s,p}_0(\Omega)$), we can suppose that $u$ is nonnegative. Consider, for $\varepsilon>0$, the function $u_\varepsilon=(u-\varepsilon)_+$. Using the $1-$Lipschitzianity of $t\mapsto (t-\varepsilon)_+$ it is readily checked that
\[
|u_\varepsilon(x)-u_\varepsilon(y)|^p \leq |u(x)-u(y)|^p, \qquad |u_\varepsilon(x)-u_\varepsilon(y)|^p \to |u(x)-u(y)|^p,\ \text{ a.e. in $\mathbb{R}^{2N}$}.
\]
Therefore $u_\varepsilon\in D^{s,p}_0(\Omega)$ and by dominated convergence $[u_\varepsilon]_{s,p}\to [u]_{s,p}$. This in turn implies that $u_\varepsilon\to u$ in $D^{s,p}_0(\Omega)$ by uniform convexity of the norm. Now Chebyshev's inequality  ensures that $\mathrm{supp}(u_\varepsilon)$ has finite measure, thus by H\"older's inequality we get $u_\varepsilon\in L^p(\mathbb{R}^N)$. This yields 
\[
u_\varepsilon\in D^{s,p}_0(\Omega)\cap L^p(\mathbb{R}^N),
\]
and \cite[Theorem 6]{FSV} ensures that $u_\varepsilon$ can be approximated, 
in the norm $[\, \cdot\, ]_{s,p},$ by functions which belong to $C^\infty_c(\Omega)$.
\end{proof}

\noindent
We recall the following nonlocal Hardy inequality proved in  \cite[Theorem 2]{FS}.

\begin{proposition}[Hardy's inequality]
Let $N>s\,p$. Then there exists $C=C(N, p, s)>0$ such that 
\begin{equation}
\label{hardyin}
\int_{\mathbb{R}^N}\frac{|u|^p}{|x|^{s\,p}}\, dx\leq C\,[u]_{s, p}^p,\qquad \mbox{ for every } u\in D^{s,p}(\mathbb{R}^N).
\end{equation}
\end{proposition}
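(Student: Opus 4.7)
The statement is cited from Frank--Seiringer, so the proof plan I outline is their ground state substitution method, adapted to the nonlinear setting.

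My plan is to prove the inequality first on a dense subclass (smooth, nonnegative, compactly supported away from the origin) and then pass to all of $D^{s,p}(\mathbb{R}^N)$ via Theorem~\ref{density} plus a truncation argument around $\{0\}$. The core identity is a \emph{ground state representation}: one chooses a positive ``virtual'' ground state $h(x)=|x|^{-\alpha}$ and rewrites the seminorm of $u$ in terms of $v=u/h$ so that the desired weight $|x|^{-sp}$ appears naturally on the right-hand side.

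First, I would compute $(-\Delta_p)^s h$ pointwise. By scale invariance, for $h_\alpha(x)=|x|^{-\alpha}$ with $\alpha\in (0,(N-sp)/(p-1))$ the candidate identity is
\[
(-\Delta_p)^s h_\alpha (x)=\Lambda_{N,p,s}(\alpha)\,|x|^{-(p-1)\alpha -s\,p},\qquad x\in\mathbb{R}^N\setminus\{0\},
\]
where $\Lambda_{N,p,s}(\alpha)$ is the one-variable integral
\[
\Lambda_{N,p,s}(\alpha):=2\int_{\mathbb{R}^N}\frac{J_p\bigl(1-|z|^{-\alpha}\bigr)}{|e_1-z|^{N+s\,p}}\,dz,
\]
read off after setting $x=e_1$ and using homogeneity. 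The rigorous justification of this pointwise formula, with principal values properly controlled, is the content of Appendix~\ref{sec:appA} of the paper. A direct continuity/concavity analysis of $\Lambda_{N,p,s}$ on the interval $(0,(N-sp)/(p-1))$ shows that it is positive, vanishes at both endpoints, and attains a strictly positive maximum $\Lambda^*=\Lambda_{N,p,s}(\alpha^*)$ at some interior $\alpha^*$ (one expects $\alpha^*=(N-sp)/p$, the exponent that also gives Sobolev's homogeneity).

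Next, I would use a pointwise \emph{discrete Picone-type inequality}: for all $a,b\ge 0$ and $X,Y>0$,
\[
J_p(X-Y)\,\left(\frac{a^p}{X^{p-1}}-\frac{b^p}{Y^{p-1}}\right)\le |a-b|^p,
\]
which is classical (Brasco--Franzina) and follows from convexity of $t\mapsto t^p$ combined with the numerical inequality $|a-b|^p\ge J_p(a-b)(a-b)$. Applied with $X=h(x)$, $Y=h(y)$, $a=|u(x)|$, $b=|u(y)|$, divided by $|x-y|^{N+sp}$ and integrated over $\mathbb{R}^{2N}$, this gives
\[
[u]_{s,p}^p\ge \int_{\mathbb{R}^{2N}}\frac{J_p(h(x)-h(y))}{|x-y|^{N+s\,p}}\left(\frac{|u(x)|^p}{h(x)^{p-1}}-\frac{|u(y)|^p}{h(y)^{p-1}}\right)dx\,dy.
\]
By symmetrizing in $x\leftrightarrow y$ and using $J_p(-t)=-J_p(t)$, the right hand side equals
\[
\int_{\mathbb{R}^N}\frac{|u(x)|^p}{h(x)^{p-1}}\,(-\Delta_p)^s h(x)\,dx=\Lambda_{N,p,s}(\alpha)\int_{\mathbb{R}^N}\frac{|u|^p}{|x|^{s\,p}}\,dx,
\]
so that the inequality holds with $C=1/\Lambda^*$ after optimizing in $\alpha$.

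Finally, I would extend the inequality to $D^{s,p}(\mathbb{R}^N)$. For $u\in C^\infty_c(\mathbb{R}^N\setminus\{0\})$ the computations above are unambiguous. A standard cut-off removing a small ball around the origin, followed by Fatou on the left hand side and monotone convergence on the right, handles smooth compactly supported $u$; then Theorem~\ref{density} (with $\Omega=\mathbb{R}^N$) closes the density argument. The main obstacle is the rigorous justification of the pointwise identity for $(-\Delta_p)^s h_\alpha$ and the exchange of integration order when pairing the (singular) Picone inequality with the (singular) kernel: both rely delicately on the principal value structure of $(-\Delta_p)^s$ and on checking that $\alpha^*$ lies in the admissible range $(0,(N-sp)/(p-1))$ in which $\Lambda_{N,p,s}>0$.
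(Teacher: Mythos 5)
The paper offers no proof of this Proposition: it is simply imported from Frank--Seiringer \cite{FS}, Theorem 2. Your proposal is a sound reconstruction of the cited argument, with the slight streamlining that you invoke the discrete Picone inequality of Brasco--Franzina to pass directly to
\[
[u]_{s,p}^p \;\geq\; \int_{\mathbb{R}^{2N}}\frac{J_p(h(x)-h(y))}{|x-y|^{N+s\,p}}\left(\frac{|u(x)|^p}{h(x)^{p-1}}-\frac{|u(y)|^p}{h(y)^{p-1}}\right)dx\,dy,
\]
rather than deriving Frank--Seiringer's full ground-state representation (identity plus nonnegative remainder). Both routes ultimately reduce the inequality to computing $(-\Delta_p)^s|x|^{-\alpha}$ and observing that the resulting constant is strictly positive for suitable $\alpha$, so the core of your scheme is faithful to the cited proof. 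Two technical cautions are worth flagging. First, the one-variable expression you write for $\Lambda_{N,p,s}(\alpha)$ as a plain Lebesgue integral over $\mathbb{R}^N$ is not absolutely convergent near $z=e_1$ when $p(1-s)\le 1$; as you say yourself, it must be understood through the principal-value cancellation, and the rigorous regularized version is exactly the one-dimensional formula \eqref{formulaC} obtained in the paper's Lemma \ref{power-lemma}. Second, the paper's Appendix establishes the weak identity $(-\Delta_p)^s|x|^{-\alpha}=C(\alpha)\,|x|^{-(p-1)\alpha - s p}$ only for $\alpha\in\big((N-sp)/p,\,N/(p-1)\big)$, and $C(\alpha)>0$ holds there exactly when $\alpha<(N-sp)/(p-1)$; so the range on which you can quote the paper's own Lemma is $\big((N-sp)/p,\,(N-sp)/(p-1)\big)$ rather than the full interval $(0,(N-sp)/(p-1))$ you mention. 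That overlap is nonempty (since $p>1$), so the argument does go through with any fixed admissible $\alpha$ there, yielding the inequality with a nonsharp constant, which is all that is claimed. Neither caveat is a gap in the scheme, only in the level of detail.
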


\noindent
We then define a suitable space where a comparison principle holds true. For any $\Omega\subset \mathbb{R}^N$ open set, we define 
\[
\begin{split}
\widetilde{D}^{s,p}(\Omega):=\Big\{u\in L^{p-1}_{\mathrm{\mathrm{loc}}}(\mathbb{R}^N)\cap L^{p^*}(\Omega) \,:\, &\exists\, E\supset\Omega \text{ with $E^c$ compact},\, \mathrm{dist}(E^c, \Omega)>0  \\ 
&\mbox{ and }\,[u]_{W^{s,p}(E)}<+\infty \Big\}.
\end{split}
\]
We wish to point out that the definition above is given having in mind the case of $\Omega$ being an exterior domain, i.e. the complement of a compact set.
Essentially, we consider functions $u$ which are regular in a slight enlargement
of $\Omega$ and possibly rough far from $\Omega$.
\vskip.2cm
The following expedient result will be used in the sequel.
\begin{lemma}[Nash-type interpolation inequality]
Let $1<p<\infty$ and $0<s<1$. For every $u\in L^{p-1}(B_R)$ such that $[u]_{W^{s,p}(B_R)}<+\infty$ we have
\begin{equation}
\label{disuguaglianza}
\|u\|^p_{L^p(B_R)}\le C\,R^{s\,p}\, [u]^p_{W^{s,p}(B_R)}+\frac{C}{R^\frac{N}{p-1}}\,\|u\|^p_{L^{p-1}(B_R)},
\end{equation}
for some $C=C(N,s,p)>0$.
\end{lemma}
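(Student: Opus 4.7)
The plan is to decompose $u$ on $B_R$ into a constant $m$ and an ``oscillatory'' part $u-m$, and to estimate each separately by a nonlocal Friedrichs-type inequality. The natural candidate for $m$ is the arithmetic mean $\bar u_R:=|B_R|^{-1}\int_{B_R}u\,dx$: the fractional Poincar\'e--Wirtinger inequality on a ball gives $\|u-\bar u_R\|_{L^p(B_R)}^p\le C R^{sp}[u]_{W^{s,p}(B_R)}^p$, and the pointwise bound $|\bar u_R|\le|B_R|^{-1}\|u\|_{L^1(B_R)}$ combined with H\"older's inequality $\|u\|_{L^1(B_R)}\le|B_R|^{1-1/(p-1)}\|u\|_{L^{p-1}(B_R)}$ then closes the argument. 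However, the H\"older step is only available for $p-1\ge 1$, and the whole strategy breaks down for $p\in(1,2)$ (as concentrated spikes show, where $|\bar u_R|^p|B_R|$ can be of order one while $\|u\|_{L^{p-1}(B_R)}^p$ is arbitrarily small). To handle all $p>1$ uniformly, I would replace $\bar u_R$ with a \emph{median} $m$ of $u$ on $B_R$, i.e.\ any real number for which $|\{u\ge m\}\cap B_R|\ge|B_R|/2$ and $|\{u\le m\}\cap B_R|\ge|B_R|/2$; such an $m$ is finite by Chebyshev, since $u\in L^{p-1}(B_R)$.

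The main input is then a nonlocal Friedrichs-type inequality, which I would prove first: for every $v\in W^{s,p}(B_R)$ vanishing on a set $Z\subset B_R$ with $|Z|\ge|B_R|/2$,
\[
\int_{B_R}|v|^p\,dx\le C\,R^{sp}\,[v]_{W^{s,p}(B_R)}^p.
\]
The proof is direct: for a.e.\ $x\in B_R$,
\[
|v(x)|^p=\frac{1}{|Z|}\int_Z|v(x)-v(y)|^p\,dy\le\frac{(2R)^{N+sp}}{|Z|}\int_{B_R}\frac{|v(x)-v(y)|^p}{|x-y|^{N+sp}}\,dy,
\]
using $v\equiv 0$ on $Z$ and $|x-y|\le 2R$ on $B_R\times B_R$; integration in $x$ yields the bound. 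I would apply this to $v_\pm:=(u-m)_\pm$: each $v_\pm$ vanishes on a subset of $B_R$ of measure at least $|B_R|/2$, and the $1$-Lipschitzianity of $t\mapsto(t-m)_\pm$ gives $[v_\pm]_{W^{s,p}(B_R)}\le[u]_{W^{s,p}(B_R)}$. Summing yields $\|u-m\|_{L^p(B_R)}^p\le C R^{sp}[u]_{W^{s,p}(B_R)}^p$.

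Separately, the median satisfies $|m|^{p-1}|B_R|/2\le\|u\|_{L^{p-1}(B_R)}^{p-1}$: up to replacing $u$ by $-u$ one may assume $m\ge 0$, in which case on the half-measure set $\{u\ge m\}$ one has $|u|^{p-1}\ge m^{p-1}$, and integration over this set gives the claim. Rearranging, $|m|^p|B_R|\le C R^{-N/(p-1)}\|u\|_{L^{p-1}(B_R)}^p$. The desired estimate follows from $|u|^p\le 2^{p-1}(|u-m|^p+|m|^p)$. The only real obstacle I anticipate is the conceptual one of realising that a mean-based decomposition is inadequate for $p\in(1,2)$ and that the median is the correct substitute; once this is fixed, the Friedrichs-type inequality and the median bound are of elementary nature.
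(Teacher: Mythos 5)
Your proof is correct, and it takes a genuinely different route from the paper's. The paper argues by compactness and contradiction: it first establishes the inequality for $u\in W^{s,p}(B_1)$ by assuming a normalised sequence violating it, extracting a strongly $L^p$-convergent subsequence via the compact embedding $W^{s,p}(B_1)\hookrightarrow L^p(B_1)$, and reaching a contradiction; it then extends to $u\in L^{p-1}(B_1)$ by truncating ($u_n=\min\{u,n\}$) and passing to the limit by monotone convergence, and finally recovers the general radius $R$ by scaling. Your argument instead is constructive: you split $u=(u-m)+m$ around a median $m$ of $u$ on $B_R$, prove a pointwise nonlocal Friedrichs inequality for functions vanishing on a half-measure subset, apply it to $(u-m)_\pm$ (using $1$-Lipschitzianity of the truncations to control their Gagliardo seminorms by that of $u$), and bound $|m|$ directly from the $L^{p-1}$ norm via the half-measure property of the median. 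The paper's route is shorter once one grants the compact embedding, but it yields no explicit constant and requires the separate truncation step to reach $u\in L^{p-1}$; yours gives explicit, trackable constants, handles $u\in L^{p-1}$ with finite seminorm in one pass, and dispenses with both compactness and the final scaling. Your observation that a mean-based decomposition fails for $p\in(1,2)$ — because $L^{p-1}(B_R)\not\hookrightarrow L^1(B_R)$ and a concentrated spike can have $|\bar u_R|^p|B_R|$ of order one with negligible $\|u\|_{L^{p-1}}$ — is the right one, and the median is indeed the correct replacement; this is an elementary but clean refinement.
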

\begin{proof}
We observe that it is enough to prove \eqref{disuguaglianza} for $R=1$, then the general case can be obtained with a simple scaling argument. 
\par
At first, we prove \eqref{disuguaglianza} for functions in $W^{s,p}(B_1)$. We can use a standard compactness argument: assume by contradiction that \eqref{disuguaglianza} is false on $W^{s,p}(B_1)$, then there exists a sequence $\{u_n\}_{n\in\mathbb{N}}\subset W^{s,p}(B_1)$ such that
\begin{equation}
\label{azzurdo}
\|u_n\|^p_{L^p(B_1)}=1\qquad \mbox{ and }\qquad [u_n]^p_{W^{s,p}(B_R)}+\|u_n\|^p_{L^{p-1}(B_1)}\le \frac{1}{n}.
\end{equation}
In particular, the sequence is bounded in $W^{s,p}(B_1)$. Thus by compactness of the embedding $W^{s,p}(B_1)\hookrightarrow L^{p}(B_1)$ (see for example \cite[Theorem 7.1]{Guida}) we get that (up to a subsequence) it converges strongly in $L^p(B_1)$ to $u\in W^{s,p}(B_1)$. From \eqref{azzurdo} we now easily get a contradiction. This shows that \eqref{disuguaglianza} is true for functions in $W^{s,p}(B_1)$.
\par
We now take $u\in L^{p-1}(B_1)$ with finite Gagliardo seminorm. Observe that
\begin{equation}
\label{valorassoluto}
\big||u(x)|-|u(y)|\big|\le |u(x)-u(y)|,
\end{equation}
so that
\[
\big[|u|\big]_{W^{s,p}(B_1)}\le [u]_{W^{s,p}(B_1)}.
\]
Thus we can assume $u$ to be positive without loss of generality. We define the increasing sequence $u_n=\min\{u,n\}\in W^{s,p}(B_1)$. From the first part of the proof and $1-$Lipschitzianity of the function $t\mapsto \min\{t,n\}$ we have
\[
\begin{split}
\|u_n\|^p_{L^p(B_1)}&\le C\,[u_n]^p_{W^{s,p}(B_1)}+C\,\|u_n\|^p_{L^{p-1}(B_1)}\\
&\le C\,[u]^p_{W^{s,p}(B_1)}+C\,\|u_n\|^p_{L^{p-1}(B_1)}.
\end{split}
\]
Passing to the limit and using the Monotone Convergence we get the desired conclusion.
\end{proof}
\begin{lemma}
\label{lm:frustone}
Let $1<p<\infty$ and $0<s<1$. For every $u\in L^{p-1}_{\mathrm{\mathrm{loc}}}(\mathbb{R}^N)$, every $E\subset \mathbb{R}^N$ open set and every ball $B_R\subset E$, we have
\begin{equation}
\label{frustone}
\int_{E} \frac{|u(x)|^{p}}{(1+|x|)^{N+s\,p}}\, dx\le C\,[u]^p_{W^{s,p}(E)}+C\,\|u\|^p_{L^{p-1}(B_R)},
\end{equation} 
for some $C=C(N,p,s,R)>0$, blowing-up as $R\searrow 0$.
\end{lemma}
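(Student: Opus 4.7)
The plan is to split the integration domain $E$ into the fixed ball $B_R$ and the exterior $E\setminus B_R$, treating each piece by a different device, and then to finish with one application of the Nash-type inequality \eqref{disuguaglianza} on $B_R$.

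On $B_R\cap E$ the weight is bounded by $1$, so
\[
\int_{B_R\cap E}\frac{|u(x)|^p}{(1+|x|)^{N+sp}}\,dx\le \|u\|^p_{L^p(B_R)},
\]
which by \eqref{disuguaglianza} is already of the right form (producing the blow-up factor $R^{-N/(p-1)}$ as $R\searrow 0$). The substantive part is the exterior integral, where we use an averaging trick. For every $x\in E\setminus B_R$ and every $y\in B_R$,
\[
|u(x)|^p\le 2^{p-1}|u(x)-u(y)|^p+2^{p-1}|u(y)|^p,
\]
and averaging in $y$ over $B_R$ we obtain
\[
|u(x)|^p\le \frac{2^{p-1}}{|B_R|}\int_{B_R}|u(x)-u(y)|^p\,dy+\frac{2^{p-1}}{|B_R|}\|u\|_{L^p(B_R)}^p.
\]

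Multiplying by $(1+|x|)^{-(N+sp)}$ and integrating over $E\setminus B_R$, the constant-in-$x$ term contributes a harmless $C(R)\|u\|^p_{L^p(B_R)}$, since the weight is itself globally integrable. For the first term we exploit the key geometric observation: for $x\notin B_R$ and $y\in B_R$,
\[
|x-y|\le |x|+R\le 2|x|\le 2(1+|x|),
\]
so $(1+|x|)^{-(N+sp)}\le 2^{N+sp}|x-y|^{-(N+sp)}$, and thus
\[
\int_{E\setminus B_R}\!\!\int_{B_R}\frac{|u(x)-u(y)|^p}{(1+|x|)^{N+sp}}\,dy\,dx\le 2^{N+sp}\int_{E\times E}\frac{|u(x)-u(y)|^p}{|x-y|^{N+sp}}\,dx\,dy=2^{N+sp}[u]_{W^{s,p}(E)}^p,
\]
where we used $B_R\subset E$ to insert the double integral on the right.

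Combining the two pieces yields
\[
\int_E\frac{|u(x)|^p}{(1+|x|)^{N+sp}}\,dx\le C(R)\,[u]^p_{W^{s,p}(E)}+C(R)\,\|u\|^p_{L^p(B_R)},
\]
after which a single application of \eqref{disuguaglianza} to $\|u\|^p_{L^p(B_R)}$ (which uses $[u]^p_{W^{s,p}(B_R)}\le [u]^p_{W^{s,p}(E)}$) converts the $L^p$-norm into the desired combination of $[u]^p_{W^{s,p}(E)}$ and $\|u\|^p_{L^{p-1}(B_R)}$. The only step requiring any thought is the weight comparison $(1+|x|)^{-(N+sp)}\lesssim |x-y|^{-(N+sp)}$ on $(E\setminus B_R)\times B_R$; once this is recognized, the averaging identity slots in and reduces everything to the Gagliardo seminorm on $E$ plus data already controlled by \eqref{disuguaglianza} on the single ball $B_R$. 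The dependence $C=C(N,p,s,R)$ with blow-up as $R\searrow 0$ is built in through the factors $|B_R|^{-1}$ and $R^{-N/(p-1)}$.
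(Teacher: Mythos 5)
Your proof is correct and uses essentially the same mechanism as the paper: compare the weight $(1+|x|)^{-(N+sp)}$ to the Gagliardo kernel $|x-y|^{-(N+sp)}$ for $y$ in the fixed ball, split $|u(x)|^p$ via the triangle inequality, and absorb the resulting $L^p(B_R)$ norm through the Nash-type inequality \eqref{disuguaglianza}. The only cosmetic differences are that you work with $B_R$ itself (the paper uses $B_{R/2}$) and organize the estimate as a direct upper bound, whereas the paper derives a reverse lower bound for the partial double integral; the underlying inequalities are the same.
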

\begin{proof}
We assume that the right-hand side on \eqref{frustone} is finite, otherwise there is nothing to prove. For simplicity, we can suppose that $B_R$ is centered at the origin. From \eqref{disuguaglianza}, we infer
\begin{equation}
\label{disuguaglianza2}
\int_{B_R} \frac{|u|^p}{(1+|x|)^{N+s\,p}}\,dx\le C\,R^{s\,p}\, [u]^p_{W^{s,p}(B_R)}+\frac{C}{R^\frac{N}{p-1}}\,\|u\|^p_{L^{p-1}(B_R)}.
\end{equation}
On the smaller ball $B_{R/2}$ (still centered at the origin), we have
\[
\int_{E\setminus B_R} \int_{B_{R/2}} \frac{|u(x)-u(y)|^p}{|x-y|^{N+s\,p}}\,dy\,dx\le [u]^p_{W^{s,p}(E)}<+\infty.
\]
Since
\[
|x-y|\le \frac{3}{2}\, |x|,\qquad x\in E\setminus B_R,\ y\in B_{R/2},
\]
we get
\[
\begin{split}
\int_{E\setminus B_R} \int_{B_{R/2}} \frac{|u(x)-u(y)|^p}{|x-y|^{N+s\,p}}\,dy\,dx&\ge c\,R^N\,\int_{E\setminus B_R} \frac{|u|^p}{|x|^{N+s\,p}}\,dx\\
&-c\, \left(\int_{E\setminus B_R} \frac{1}{|x|^{N+s\,p}}\,dx\right)\, \int_{B_{R/2}} |u|^p\,dy.
\end{split}
\]
In conclusion, the previous estimate proves
\[
\int_{E\setminus B_R} \frac{|u|^p}{(1+|x|)^{N+s\,p}}\,dx\le \frac{C}{R^N}\,[u]_{W^{s,p}(E)}^p+\frac{C}{R^{N+s\,p}}\, \int_{B_{R/2}} |u|^p\,dx.
\]
Using \eqref{disuguaglianza} to estimate the $L^p$ norm in the right-hand side gives
\begin{equation}
\label{disuguaglianza3}
\int_{E\setminus B_R} \frac{|u|^p}{(1+|x|)^{N+s\,p}}\,dx\le \frac{C}{R^N}\,\,[u]_{W^{s,p}(E)}^p+\frac{C}{R^{N\,\frac{p}{p-1}+s\,p}}\, \left(\int_{B_{R/2}} |u|^{p-1}\,dy\right)^\frac{p}{p-1},
\end{equation}
possibly for a different constant $C=C(N,s,p)>0$.
By summing up \eqref{disuguaglianza2} and \eqref{disuguaglianza3} we get the conclusion.
\end{proof}
\noindent
The next proposition shows that in the space $\widetilde{D}^{s, p}(\Omega)$, the operator $(-\Delta_p)^s$ is well defined.

\begin{proposition}
\label{deltaps}
For any $u\in \widetilde{D}^{s,p}(\Omega)$, the operator
\[
D^{s,p}_0(\Omega)\ni \varphi\mapsto  \langle (-\Delta_p)^s u, \varphi\rangle:=\int_{\mathbb{R}^N\times \mathbb{R}^N}\frac{J_p(u(x)-u(y))\,(\varphi(x)-\varphi(y))}{|x-y|^{N+s\,p}}\, dx\, dy
\]
is well defined and belongs to the dual space $(D^{s, p}_0(\Omega))^*$.
\end{proposition}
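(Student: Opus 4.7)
My plan is to split the double integral defining $\langle (-\Delta_p)^s u, \varphi \rangle$ according to whether $(x,y)$ lies in $E \times E$ or outside, using $E$ as a buffer to separate a ``singular'' piece (where the $W^{s,p}(E)$-seminorm of $u$ is finite) from a ``tail'' piece (where $|x-y|$ is bounded below by $\delta := \mathrm{dist}(E^c, \Omega) > 0$). By the symmetry of the integrand under $x \leftrightarrow y$ and the vanishing of $\varphi$ on $\Omega^c \supset E^c$, the contribution from $E^c \times E^c$ vanishes, and the piece on $(E^c \times E) \cup (E \times E^c)$ reduces (using that $\varphi$ is supported in $\Omega \subset E$) to $2\int_\Omega\int_{E^c}$. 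So the whole expression splits as $I_1 + 2 I_2$, where
\[
I_1 = \int_{E \times E}\!\! \frac{J_p(u(x)-u(y))(\varphi(x)-\varphi(y))}{|x-y|^{N+s\,p}}\,dx\,dy, \qquad I_2 = \int_\Omega \!\!\int_{E^c}\!\! \frac{J_p(u(x)-u(y))\,\varphi(x)}{|x-y|^{N+s\,p}}\,dy\,dx.
\]

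The core piece $I_1$ is bounded by H\"older with exponents $p/(p-1)$ and $p$, yielding $|I_1| \leq [u]_{W^{s,p}(E)}^{p-1}\,[\varphi]_{s,p} < \infty$. For the tail $I_2$, I use $|J_p(u(x)-u(y))| \leq C(|u(x)|^{p-1} + |u(y)|^{p-1})$ together with the pointwise kernel estimate $|x-y|^{-(N+s\,p)} \leq C(1+|x|)^{-(N+s\,p)}$ valid for $x \in \Omega$, $y \in E^c$; this follows from $|x-y| \geq \delta$ and the boundedness of $E^c$ (write $|y| \leq M$ and split on $|x| \leq 2M$ versus $|x| > 2M$). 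Since $E^c$ is compact and $u \in L^{p-1}_{\mathrm{loc}}(\mathbb{R}^N)$, the $y$-integration produces
\[
|I_2| \leq C \int_\Omega \bigl(|u(x)|^{p-1} + \|u\|_{L^{p-1}(E^c)}^{p-1}\bigr)\, |\varphi(x)|\, (1+|x|)^{-(N+s\,p)} \, dx.
\]

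A second application of H\"older (again with exponents $p/(p-1)$ and $p$) splits the $|u|^{p-1}|\varphi|$ term into a product of weighted $L^p$-integrals of $u$ and $\varphi$, each of which is controlled by Lemma \ref{lm:frustone}: applied to $u$ on $E$ it exploits $[u]_{W^{s,p}(E)}<\infty$ and $u \in L^{p-1}_{\mathrm{loc}}$; applied to $\varphi$ on $\mathbb{R}^N$ it exploits $[\varphi]_{s,p}<\infty$ and controls the residual $\|\varphi\|_{L^{p-1}(B_R)}$ via the Sobolev embedding $D^{s,p}_0(\Omega) \hookrightarrow L^{p^*}(\mathbb{R}^N)$ followed by H\"older on a fixed ball $B_R$. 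The pure $|\varphi|$ term is handled analogously, using $\int_{\mathbb{R}^N}(1+|x|)^{-(N+s\,p)}\,dx<\infty$. Assembling these estimates yields $|\langle (-\Delta_p)^s u, \varphi \rangle| \leq C(u)\,[\varphi]_{s,p}$, which gives simultaneously absolute convergence of the integral and continuity of the functional on $D^{s,p}_0(\Omega)$.

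The main obstacle is the tail estimate: only $L^{p-1}_{\mathrm{loc}}$ and $L^{p^*}(\Omega)$ control on $u$ is available, with no a priori bound on a global Gagliardo seminorm. The definition of $\widetilde{D}^{s,p}(\Omega)$ is tailored precisely so that the separation $\delta > 0$ tames the singular kernel on the tail, while Lemma \ref{lm:frustone} converts the local $L^{p-1}$ regularity of $u$ together with the Gagliardo bound on $E$ into the weighted $L^p$ estimates needed to close the argument linearly in $[\varphi]_{s,p}$.
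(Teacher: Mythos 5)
Your proof is correct and follows the same splitting $I_1 + 2I_2$ (over $E\times E$ and $\Omega\times E^c$) that the paper uses, with the same H\"older bound for $I_1$. The only genuine difference is in how you establish continuity of the tail $I_2$ in $\varphi$: the paper detaches a factor $|x|^{-s}$ from $\varphi$ and invokes Hardy's inequality \eqref{hardyin} to control $\||x|^{-s}\varphi\|_{L^p(\Omega)}$ by $[\varphi]_{s,p}$, while the remaining weight $(1+|x|)^{-(N+s\,p)/p'}$ is matched against $|u|^{p-1}$ and treated by Lemma~\ref{lm:frustone}. You instead distribute the full kernel weight $(1+|x|)^{-(N+s\,p)}$ between $|u|^{p-1}$ (exponent $p'$) and $|\varphi|$ (exponent $p$) in the H\"older-balanced way, landing exactly on Lemma~\ref{lm:frustone} for both factors, then closing the residual $\|\varphi\|_{L^{p-1}(B_R)}$ via Sobolev embedding. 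Both routes are valid; yours has the appeal of using a single interpolation tool uniformly in $u$ and $\varphi$ and not relying on Hardy at all, at the cost of slightly heavier bookkeeping in the weight exponents, whereas the paper's Hardy step is conceptually cleaner (it directly exhibits the tail as a bounded linear functional on the weighted $L^p$ image of $D^{s,p}_0(\Omega)$). Your verification that $(N+s\,p)\cdot p'^{-1}$ on $|u|^p$ and $(N+s\,p)\cdot p^{-1}$ on $|\varphi|^p$ both recover the exact weight $(1+|x|)^{-(N+s\,p)}$ required by Lemma~\ref{lm:frustone} is the key bookkeeping check, and it goes through.
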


\begin{proof}
We proceed as in \cite[Lemma 2.3]{IMS}. Let $E\supset \Omega$ be such that $E^c$ is compact, ${\rm dist}(E^c, \Omega)>0$ and $[u]_{W^{s,p}(E)}<+\infty$.
Since $\varphi\equiv 0$ in $\Omega^c$, we split the integral as 
\[
\begin{split}
\int_{\mathbb{R}^N\times \mathbb{R}^N}&\frac{J_p(u(x)-u(y))\,(\varphi(x)-\varphi(y))}{|x-y|^{N+s\,p}}\, dx\, dy\\
&=\int_{E\times E}\frac{J_p(u(x)-u(y))\,(\varphi(x)-\varphi(y))}{|x-y|^{N+s\,p}}+2\int_{\Omega\times E^c}\frac{J_p(u(x)-u(y))\,\varphi(x)}{|x-y|^{N+s\,p}}\, dx\, dy.
\end{split}
\]
By H\"older's inequality the first term is finite and defines a continuous linear functional on $D^{s,p}_0(\Omega)$. Let us focus on the second one. By using that $\varphi\equiv 0$ in $E^c$, we need to show that 
\[
\varphi\mapsto \int_{\Omega}\varphi(x)\left(\int_{E^c} \frac{J_p(u(x)-u(y))}{|x-y|^{N+s\,p}}\, dy\right) dx,
\]
is a continuous linear functional on $D^{s,p}_0(\Omega)$. By means of Hardy's inequality \eqref{hardyin}, we get that convergence of $\{\varphi_n\}_{n\in\mathbb{N}}$ in $D^{s,p}_0(\Omega)\subset D^{s,p}(\mathbb{R}^N)$ implies strong convergence in $L^p(\Omega)$ of $\{|x|^{-s}\varphi_n\}_{n\in\mathbb{N}}$. Thus to prove the claim it suffices to show that 
\[
x\mapsto |x|^s\int_{E^c} \frac{J_p(u(x)-u(y))}{|x-y|^{N+s\,p}}\, dy\in L^{p'}(\Omega).
\]
Being $E^c$ compact and ${\rm dist}(E^c, \Omega)\geq \delta>0$ it holds
\begin{equation}
\label{xmy}
|x-y|\geq C\,(1+|x|), \qquad \mbox{ for every } x\in \Omega,\ y\in E^c,
\end{equation}
for some $C=C(E,\Omega)>0$. Thus, for almost every $x\in\Omega$, we can estimate
\[
\left|\int_{E^c} \frac{|x|^s\,J_p(u(x)-u(y))}{|x-y|^{N+sp}}\, dy\right|\leq C\left[|E^c|\,\frac{|u(x)|^{p-1}}{(1+|x|)^{\frac{N+sp}{p'}}}+\frac{1}{(1+|x|)^{N+s\,(p-1)}}\,\int_{E^c}|u|^{p-1}\, dy\right].
\]
The first term belongs to $L^{p'}(\Omega)$ due to \eqref{frustone}.
For the second one this follows from a direct computation. This proves the claim and the proposition.
\end{proof}

\begin{definition}
\label{defcomp}
Let $u\in \widetilde{D}^{s,p}(\Omega)$ and $\Lambda\in (D^{s,p}_0(\Omega))^*$. We say that $(-\Delta_p)^s u\leq \Lambda$ weakly in $\Omega$ if for all $\varphi\in D^{s,p}_0(\Omega)$, $\varphi\ge 0$ in $\Omega$,
\[
\int_{\mathbb{R}^{2N}}\frac{J_p(u(x)-u(y))\,(\varphi(x)-\varphi(y))}{|x-y|^{N+s\,p}}\,dx\,dy \le \langle\Lambda, \varphi\rangle.
\]
\end{definition}

\begin{theorem}[Comparison principle in general domains]
\label{comparison}
Let $\Omega\subset\mathbb{R}^N$ be an open set. Let $u, v\in \widetilde{D}^{s,p}(\Omega)$ satisfy 
\[
u\le v\ \mbox{ in }\Omega^c\qquad \mbox{ and }\qquad (-\Delta_p)^s u\leq (-\Delta_p)^s v\ \mbox{ in } \Omega.
\]
Then $u\le v$ in $\Omega$.
\end{theorem}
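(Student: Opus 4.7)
The plan is to test the weak inequality $(-\Delta_p)^s u \le (-\Delta_p)^s v$ against the natural candidate $\varphi := (u-v)_+$ and to exploit the strict monotonicity of $t \mapsto J_p(t)$ to conclude.

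First I would verify that $\varphi$ is an admissible test function, i.e.\ $\varphi \in D^{s,p}_0(\Omega)$. The vanishing condition $\varphi \equiv 0$ on $\Omega^c$ is immediate from $u \le v$ there, and $\varphi \in L^{p^*}(\Omega)$ follows from $u, v \in L^{p^*}(\Omega)$. For the Gagliardo seminorm, I would fix a common open set $E \supset \Omega$ (the intersection of those given by the definition of $\widetilde{D}^{s,p}$ for $u$ and for $v$) with $E^c$ compact, $\mathrm{dist}(E^c, \Omega) \ge \delta > 0$ and $[u]_{W^{s,p}(E)} + [v]_{W^{s,p}(E)} < \infty$, and split $\mathbb{R}^{2N}$ into the four regions induced by $E$. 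On $E \times E$ the $1$-Lipschitzianity of $t \mapsto t_+$ combined with the triangle inequality yields
$$
[\varphi]^p_{W^{s,p}(E)} \le 2^{p-1}\bigl([u]^p_{W^{s,p}(E)} + [v]^p_{W^{s,p}(E)}\bigr) < \infty.
$$
On $E^c \times E^c$ the integrand vanishes, since $\varphi \equiv 0$ on $\Omega^c \supset E^c$. On each mixed strip, the pointwise bound $|x-y| \ge c(1+|x|)$ for $x \in \Omega$ and $y \in E^c$ (as in \eqref{xmy}) reduces the contribution to a constant multiple of $\int_\Omega \varphi(x)^p\,(1+|x|)^{-N-sp}\,dx$, whose finiteness is exactly what Lemma \ref{lm:frustone} delivers when applied separately to $u$ and $v$.

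With $\varphi \in D^{s,p}_0(\Omega)$ and $\varphi \ge 0$, Definition \ref{defcomp} yields
$$
I := \int_{\mathbb{R}^{2N}} \frac{\bigl[J_p(u(x)-u(y)) - J_p(v(x)-v(y))\bigr]\,\bigl(\varphi(x) - \varphi(y)\bigr)}{|x-y|^{N+sp}}\,dx\,dy \le 0.
$$
I would then analyze the sign of the integrand on the four regions determined by $A := \{u > v\}$ and $A^c$. On $A \times A$, one has $\varphi(x) - \varphi(y) = (u(x)-u(y)) - (v(x)-v(y))$, and monotonicity of $J_p$ makes the integrand nonnegative. On $A^c \times A^c$ the integrand vanishes. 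On $A \times A^c$ (and by symmetry on $A^c \times A$), one has $\varphi(x) - \varphi(y) = u(x) - v(x) > 0$ while
$$
(u(x) - u(y)) - (v(x) - v(y)) = \bigl(u(x)-v(x)\bigr) - \bigl(u(y)-v(y)\bigr) \ge u(x) - v(x) > 0,
$$
so strict monotonicity of $J_p$ makes the integrand strictly positive. Combined with $I \le 0$, this forces $|A \times A^c| = 0$. Since $A^c$ always contains $\Omega^c$, the typical case $|\Omega^c|>0$ yields $|A| = 0$ immediately; the degenerate case $\Omega = \mathbb{R}^N$ is covered by the subsequent remark that if $|A^c|=0$ held, the vanishing of the integrand on $A \times A$ would force $u-v$ to be a.e.\ constant, hence identically zero by $\varphi \in L^{p^*}$.

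The step I anticipate to be the main obstacle is the admissibility check in the first paragraph: ensuring that $(u-v)_+$ lies in $D^{s,p}_0(\Omega)$ when $u$ and $v$ are only required to be in $L^{p-1}_{\mathrm{loc}}(\mathbb{R}^N)$ far from $\Omega$. This is precisely the motivation for having introduced the weaker space $\widetilde{D}^{s,p}(\Omega)$ and the Nash-type interpolation estimate \eqref{frustone} in the preliminary section; without them, the tail part of the Gagliardo seminorm of $\varphi$ cannot be controlled.
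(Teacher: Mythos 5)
Your argument is correct and follows essentially the same route as the paper: the substantive part of the paper's proof is precisely the verification that $(u-v)_+ \in D^{s,p}_0(\Omega)$ (via the splitting over $E\times E$ and $\Omega\times E^c$, the Lipschitz bound $|w(x)-w(y)|^p\lesssim|u(x)-u(y)|^p+|v(x)-v(y)|^p$, the estimate \eqref{xmy}, and Lemma~\ref{lm:frustone}), and your admissibility check matches this step for step. The monotonicity argument you spell out with the sets $A=\{u>v\}$ and $A^c$ is exactly the content the paper delegates to the citation of \cite[Lemma 9]{LL}, so you have simply made explicit what the paper imports by reference.
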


\begin{proof}
It suffices to proceed as in \cite[Lemma 9]{LL}, we only need to prove that $w:=(u-v)_+$ is an admissible test function, i.e. it belongs to $D^{s,p}_0(\Omega)$. Clearly $w\equiv 0$ in $ \Omega^c$ and $w\in L^{p^*}(\mathbb{R}^N)$. To estimate the Gagliardo seminorm, let $E\supset \Omega$ be such that $E^c$ is compact,  ${\rm dist}(E^c, \Omega)>0$ and 
\begin{equation}
\label{kl}
\int_{E\times E} \frac{|u(x)-u(y)|^p}{|x-y|^{N+s\,p}}\, dx\, dy+\int_{E\times E} \frac{|v(x)-v(y)|^p}{|x-y|^{N+s\,p}}\, dx\, dy<+\infty.
\end{equation}
Then
\[
\int_{\mathbb{R}^{2N}} \frac{|w(x)-w(y)|^p}{|x-y|^{N+s\,p}}\, dx\, dy=\int_{E\times E} \frac{|w(x)-w(y)|^p}{|x-y|^{N+s\,p}}\, dx\, dy+2\int_{\Omega\times E^c} \frac{|w(x)|^p}{|x-y|^{N+s\,p}}\, dx\, dy,
\]
and the first integral is finite due to 
\[
|w(x)-w(y)|^p\leq C\,(|u(x)-u(y)|^p+|v(x)-v(y)|^p),
\]
and \eqref{kl}. For the second one we use \eqref{xmy}, and since $|w(x)|^p\leq C(|u(x)|^p+|v(x)|^p)$ we get
\[
\int_{\Omega\times E^c} \frac{|w(x)|^p}{|x-y|^{N+sp}}\, dx\, dy\leq C\,|E^c| \int_{\Omega} \frac{|u(x)|^p}{(1+|x|)^{N+sp}}\, dx+ C\,|E^c|\int_{\Omega} \frac{|v(x)|^p}{(1+|x|)^{N+sp}}\, dx.
\]
The last two terms are finite, due the definition of $\widetilde{D}^{s,p}(\Omega)$ and \eqref{frustone}.
\end{proof}

\noindent
Finally, for the reader's convenience we recall the following result from \cite{IMS}. The proof is identical to the one of \cite[Lemma 2.8]{IMS} and we omit it.

\begin{proposition}[Non-local behavior of $(-\Delta_p)^s$]
\label{nonlocalb}
Let $N>s\,p$ and let $\Omega\subset\mathbb{R}^N$ be an open set such that $\partial\Omega$ is compact and locally the graph of continuous functions. Suppose that $u\in \widetilde{D}^{s,p}(\Omega)$ weakly solves $(-\Delta_p)^su=f$  for some $f\in L^1_{\rm loc}(\Omega)\cap (D^{s,p}_0(\Omega))^*$, in the sense that
\begin{equation}
\label{defweaksol}
\langle (-\Delta_p)^s u, \varphi\rangle=\int_{\Omega}f\,\varphi\, dx,\qquad \mbox{ for every } \varphi\in D^{s,p}_0(\Omega).
\end{equation}
 Let $v$ be a measurable function with compact support $K:=\mathrm{supp}(v)$ such that
\[
{\rm dist}(K,\Omega)>0,\qquad \int_{\Omega^c} |v|^{p-1}\, dx<+\infty,
\]
and define for a.e.\ Lebesgue point $x\in \Omega$ of $u$
\[
h(x)=2\int_{K}\frac{J_p\big(\big(u(x)-u(y)\big)-v(y)\big)-J_p\big(u(x)-u(y)\big)}{|x-y|^{N+s\,p}}\,dy.
\]
Then $u+v\in \widetilde{D}^{s,p}(\Omega)$ and $(-\Delta_p)^s(u+v)=f+h$ weakly.
\end{proposition}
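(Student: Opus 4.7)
The proof divides into two tasks: showing $u+v \in \widetilde{D}^{s,p}(\Omega)$, and verifying the weak identity $(-\Delta_p)^s(u+v) = f+h$. My plan is to handle membership first by shrinking the admissible set $E$ away from $K$, then expand the pairing $\langle(-\Delta_p)^s(u+v),\varphi\rangle$ by splitting the domain of integration using that $v$ is supported in $K \subset \Omega^c$ and that $\varphi$ vanishes outside $\Omega$.

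\textbf{Membership.} Since $u \in \widetilde{D}^{s,p}(\Omega)$, fix $E \supset \Omega$ with $E^c$ compact, $\mathrm{dist}(E^c,\Omega) > 0$, and $[u]_{W^{s,p}(E)} < \infty$. Let $U$ be an open neighborhood of the compact set $K$ with $\mathrm{dist}(U,\Omega) > 0$, which exists because $\mathrm{dist}(K,\Omega) > 0$. Then $E' := E \setminus \overline{U}$ satisfies $(E')^c = E^c \cup \overline{U}$ compact, $\mathrm{dist}((E')^c,\Omega) > 0$, and $v \equiv 0$ on $E'$, so $[u+v]_{W^{s,p}(E')} = [u]_{W^{s,p}(E')} \le [u]_{W^{s,p}(E)} < \infty$. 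The condition $u+v \in L^{p^*}(\Omega)$ follows from $v \equiv 0$ on $\Omega$, while $u+v \in L^{p-1}_{\mathrm{loc}}(\mathbb{R}^N)$ follows from $u \in L^{p-1}_{\mathrm{loc}}(\mathbb{R}^N)$ combined with $v \in L^{p-1}(K)$, itself a consequence of $K$ being compact and $\int_{\Omega^c}|v|^{p-1}\,dx < \infty$.

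\textbf{Splitting the pairing.} For $\varphi \in D^{s,p}_0(\Omega)$, symmetry of the kernel together with $\varphi \equiv 0$ off $\Omega$ reduces both $\langle(-\Delta_p)^s u,\varphi\rangle$ and $\langle(-\Delta_p)^s(u+v),\varphi\rangle$ to an integral over $\Omega \times \Omega$ plus twice an integral over $\Omega \times \Omega^c$. On $\Omega \times \Omega$, where $v$ vanishes, the two expressions coincide. On $\Omega \times \Omega^c$ one has $v(x) = 0$, so the $(u+v)$-integrand becomes $J_p\bigl((u(x)-u(y))-v(y)\bigr)\,\varphi(x)/|x-y|^{N+s\,p}$; outside $K$ also $v(y) = 0$ and the two integrands again coincide. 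Subtracting and using $\langle(-\Delta_p)^s u,\varphi\rangle = \int_\Omega f\varphi\,dx$ gives
\[
\langle (-\Delta_p)^s(u+v),\varphi\rangle - \int_\Omega f\varphi\,dx = 2\int_\Omega\int_K \frac{J_p\bigl((u(x)-u(y))-v(y)\bigr) - J_p(u(x)-u(y))}{|x-y|^{N+s\,p}}\,\varphi(x)\,dy\,dx,
\]
and an application of Fubini identifies the right-hand side with $\int_\Omega h\,\varphi\,dx$, which is the desired weak formulation in the sense of Definition~\ref{defcomp}.

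\textbf{Absolute integrability for Fubini (main obstacle).} The last step hinges on absolute integrability of the double integral above. Since $K$ is compact and $\mathrm{dist}(K,\Omega) > 0$, the estimate \eqref{xmy} yields $|x-y| \ge c\,(1+|x|)$ for $x \in \Omega$, $y \in K$. For $1 < p \le 2$ the $(p-1)$-H\"older continuity of $J_p$ provides $|J_p(a-b)-J_p(a)| \le C\,|v(y)|^{p-1}$; integration in $y$ is finite thanks to $v \in L^{p-1}(K)$, and one is left with $\int_\Omega |\varphi(x)|(1+|x|)^{-(N+s\,p)}\,dx$, which is controlled by H\"older pairing $\varphi \in L^{p^*}(\Omega)$ against the integrable polynomial weight. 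For $p \ge 2$ inequality \eqref{licciz} yields $|J_p(a-b)-J_p(a)| \le C\bigl(|u(x)|^{p-2} + |u(y)|^{p-2} + |v(y)|^{p-2}\bigr)|v(y)|$; the terms involving $|u(y)|^{p-2}|v(y)|$ and $|v(y)|^{p-1}$ are bounded on $K$ using $u,v \in L^{p-1}(K)$ and H\"older, while the delicate remainder $\int_\Omega |u(x)|^{p-2}\,|\varphi(x)|\,(1+|x|)^{-(N+s\,p)}\,dx$ is closed by H\"older splitting with exponents $p/(p-2)$ and $p/2$, combining Lemma~\ref{lm:frustone} to bound $\int_\Omega |u|^p(1+|x|)^{-(N+s\,p)}\,dx$ with Hardy's inequality \eqref{hardyin} for the $\varphi$-factor. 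This justifies Fubini and concludes the proof.
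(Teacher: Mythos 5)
Your proof is correct and follows the approach the paper implicitly takes (the paper omits the argument, citing \cite[Lemma 2.8]{IMS} as identical): you recover the three essential steps — shrinking the $E$-set away from $K$ to establish $u+v\in\widetilde D^{s,p}(\Omega)$, cancellation of the $J_p$-terms outside $\Omega\times K$ after splitting the pairing, and the absolute-integrability estimates (via \eqref{licciz} or $(p-1)$-H\"older continuity of $J_p$, combined with \eqref{xmy}, Lemma~\ref{lm:frustone} and Hardy) that justify Fubini and identify the extra contribution with $\int_\Omega h\,\varphi\,dx$.
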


\subsection{Radial functions}

For every measurable function $u:\mathbb{R}^N\to\mathbb{R}$ we define its distribution function
\[
\mu_u(t)=\big|\{x\, :\, |u(x)|>t\}\big|,\qquad t>0.
\] 
Let $0< q<\infty$ and $0<\theta<\infty$, the {\it Lorentz space} $L^{q,\theta}(\mathbb{R}^N)$ is defined by
\[
L^{q,\theta}(\mathbb{R}^N)=\left\{u\, :\, \int_0^\infty t^{\theta-1}\,\mu_u(t)^\frac{\theta}{q}\,dt<+\infty\right\}.
\]
In the limit case $\theta=\infty$, this is defined by
\[
L^{q,\infty}(\mathbb{R}^N)=\left\{u\, :\, \sup_{t>0} t\,\mu_u(t)^\frac{1}{q}<+\infty\right\},
\]
and we recall that this coincides with the weak $L^q$ space (see for example \cite[page 106]{liebL}).
\begin{lemma}[Radial Lemma for Lorentz spaces]
\label{lm:border}
Let $0<\theta\le \infty$ and $0< q<\infty$. Let $u\in L^{q,\theta}(\mathbb{R}^N)$ be a non-negative and radially symmetric decreasing function.
Then
\[
\begin{split}
0\le u(x)&\le \left(\theta\,\omega_N^{-\frac{\theta}{q}}\,\int_0^\infty t^{\theta-1}\,\mu_u(t)^\frac{\theta}{q}\,dt\right)^\frac{1}{\theta}\, |x|^{-\frac{N}{q}},\qquad \mbox{ if } \theta<\infty,\\
0\le u(x)&\le \left(\omega_N^{-\frac{1}{q}}\,\sup_{t>0} t\,\mu_u(t)^\frac{1}{q}\right)\,|x|^{-\frac{N}{q}},\qquad \mbox{ if } \theta=\infty.
\end{split}
\]
\end{lemma}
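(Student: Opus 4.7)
The plan is to exploit the geometric fact that super-level sets of a non-negative, radially symmetric, radially decreasing function are balls, and then read off the bound on $u(x)$ from the definition of the Lorentz (quasi-)norm.

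First I would fix $x \in \mathbb{R}^N \setminus\{0\}$ and observe that, by the monotonicity hypothesis, for every $t \in (0, u(x))$ the super-level set $\{|u| > t\} = \{u > t\}$ contains the closed ball $\overline{B_{|x|}(0)}$. Indeed, if $y$ satisfies $|y|\leq|x|$, then $u(y)\geq u(x) > t$. Consequently
\[
\mu_u(t) \geq \omega_N\,|x|^N, \qquad \text{for every } 0 < t < u(x).
\]

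For the case $\theta = \infty$, this immediately gives $t\,\mu_u(t)^{1/q} \geq t\,\omega_N^{1/q}\,|x|^{N/q}$ for every $t \in (0, u(x))$. Letting $t \nearrow u(x)$ and taking the supremum in the definition of $\|u\|_{L^{q,\infty}}$ yields
\[
\sup_{t>0} t\,\mu_u(t)^{1/q} \geq u(x)\,\omega_N^{1/q}\,|x|^{N/q},
\]
which rearranges to the claimed inequality.

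For the case $\theta < \infty$, I would restrict the integration in the Lorentz norm to the interval $(0, u(x))$ and use the same lower bound on $\mu_u(t)$:
\[
\int_0^\infty t^{\theta-1}\,\mu_u(t)^{\theta/q}\,dt \;\geq\; \int_0^{u(x)} t^{\theta-1}\,\bigl(\omega_N\,|x|^N\bigr)^{\theta/q}\,dt \;=\; \frac{\omega_N^{\theta/q}\,|x|^{N\theta/q}}{\theta}\,u(x)^\theta.
\]
Solving for $u(x)$ delivers the stated bound. The only possible subtlety is the borderline case $u(x)=0$, which is trivial, and the case $x=0$, which is excluded (or for which the bound is vacuous). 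There is no real obstacle here: the entire argument rests on the elementary but decisive observation that radial monotonicity forces $\{u>t\}$ to be a ball containing $B_{|x|}$ whenever $t<u(x)$.
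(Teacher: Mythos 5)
Your proof is correct, and it is more direct than the paper's. You observe that for any $t<u(x)$, radial monotonicity forces the super-level set $\{u>t\}$ to contain $B_{|x|}(0)$, hence $\mu_u(t)\geq\omega_N|x|^N$; plugging this lower bound on $\mu_u$ straight into the defining integral (restricted to $(0,u(x))$) or into the weak-$L^q$ supremum immediately gives both inequalities. The paper takes a longer route for $\theta<\infty$: it first establishes, via a Cavalieri/layer-cake computation, the exact identity
\[
\int_0^\infty t^{\theta-1}\,\mu_u(t)^{\theta/q}\,dt=\frac{N-\alpha}{N\,\theta\,\omega_N^{\alpha/N}}\int_{\mathbb{R}^N}\frac{u^\theta}{|x|^\alpha}\,dx,\qquad \frac{\theta}{q}=\frac{N-\alpha}{N},
\]
and only then bounds the right-hand side from below using $u(\varrho)\geq u(R)$ on $B_R$. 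Your argument bypasses that identity entirely and works with the distribution function directly. Both proofs exploit the same underlying geometric fact; the paper's approach has the side benefit of recording a clean equivalence between the Lorentz quantity and a weighted power integral, which can be of independent interest, while yours is shorter and makes the $\theta=\infty$ and $\theta<\infty$ cases look essentially identical.
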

\begin{proof}
For simplicity, we simply write $\mu$ in place of $\mu_u$ and suppose that $u=u(r)$ coincides with its right-continuous representative. We start with the case $\theta<\infty$. 
First of all, we prove that
\begin{equation}
\label{equivalente}
\int_0^\infty t^{\theta-1}\,\mu(t)^\frac{\theta}{q}\,dt=\frac{N-\alpha}{N\,\theta\,\omega_N^{\alpha/N}}\,\int_{\mathbb{R}^N} \frac{u^\theta}{|x|^\alpha}\,dx,
\end{equation}
where the exponent $\alpha<N$ is given by the relation\footnote{Observe that if $\theta>q$, then $\alpha<0$.}
\[
\frac{\theta}{q}=\frac{N-\alpha}{N}.
\]
With a simple change of variable
\begin{equation}
\label{cambio}
\int_0^\infty t^{\theta-1}\,\mu(t)^\frac{\theta}{q}\, dt=\frac{1}{\theta}\, \int_0^\infty \mu(s^{1/\theta})^\frac{\theta}{q}\, ds.
\end{equation}
Then we observe that
\[
\int_{\mathbb{R}^N} \frac{u^\theta}{|x|^\alpha}\, dx=\int_{\mathbb{R}^N} \frac{\displaystyle\int_0^\infty \chi_{\{t\,:\,u(x)^\theta>t\}}(s)\, ds}{|x|^\alpha}\, dx=\int_0^\infty \int_{\mathbb{R}^N} \frac{\chi_{\{t\,:\,u(x)>t^{1/\theta}\}}(s)}{|x|^\alpha}\, dx\, ds,
\]
and
\[
\chi_{\{t\,:\,u(x)>t^{1/\theta}\}}(s)=\chi_{\{y\, :\, u(y)>s^{1/\theta}\}}(x).
\]
By assumption we have
\[
\{y\, :\, u(y)>s^{1/\theta}\}=\left\{y\, :\, |y|<\left(\frac{\mu(s^{1/\theta})}{\omega_N}\right)^\frac{1}{N}\right\}=:B_{R(s)},
\]
since the function $u$ is radially decreasing. Thus we arrive at
\[
\begin{split}
\int_{\mathbb{R}^N} \frac{u^\theta}{|x|^\alpha}\, dx&=\int_0^\infty \left(\int_{B_{R(s)}} \frac{1}{|x|^\alpha}\, dx\right)\, ds\\
&=N\,\omega_N\, \int_0^\infty \int_0^{R(s)} \varrho^{N-1-\alpha}\,d\varrho\, ds=\frac{N\,\omega_N}{N-\alpha}\, \int_0^\infty \frac{\mu(s^{1/\theta})^\frac{N-\alpha}{N}}{\omega_N^\frac{N-\alpha}{N}}\, ds\\
&=\frac{N\,\omega_N^{\alpha/N}}{N-\alpha}\, \int_0^\infty \mu(s^{1/\theta})^\frac{\theta}{q}\, ds.
\end{split}
\]
Using \eqref{cambio} we finally obtain
\[
\int_0^\infty t^{\theta-1}\,\mu(t)^\frac{\theta}{q}\, dt=\frac{N-\alpha}{N\,\theta\,\omega_N^{\alpha/N}}\, \int_{\mathbb{R}^N} \frac{u^\theta}{|x|^\alpha}\, dx,
\]
which proves \eqref{equivalente}.
\vskip.2cm\noindent
As for the decay estimate, thanks to \eqref{equivalente} we have
\[
\begin{split}
+\infty>\int_{\mathbb{R}^N} \frac{u^\theta}{|x|^\alpha}\,dx&=N\,\omega_N\, \int_0^{+\infty} u(\varrho)^\theta\,\varrho^{N-1-\alpha}\, d\varrho\\
&\ge N\,\omega_N\, \int_0^{R} u(\varrho)^\theta\, \varrho^{N-1-\alpha}\,d\varrho\ge N\,\omega_N\, u(R)^\theta\, \frac{R^{N-\alpha}}{N-\alpha},
\end{split}
\]
where $\alpha$ is as above. Recalling that $(N-\alpha)/\theta=N/q$, we get the desired conclusion.
\vskip.2cm\noindent
For the case $\theta=\infty$, it is sufficient to observe that
\[
\sup_{t>0} t\,\mu(t)^\frac{1}{q}=\omega_N^\frac{1}{q}\,\sup_{x\in\mathbb{R}^N} |x|^\frac{N}{q}\,u(x).
\]
Then the decay estimate easily follows.
\end{proof}

\section{Properties of extremals}

\label{sec:3}

\subsection{Basic properties}
We first observe that by homogeneity we can equivalently write
\begin{equation}
\label{costante}
\mathcal{S}_{p,s}=\inf_{u\in D^{s,p}(\mathbb{R}^N)} \left\{\int_{\mathbb{R}^{2N}}\frac{|u(x)-u(y)|^p}{|x-y|^{N+s\,p}}\,dx\,dy\, :\, \int_{\mathbb{R}^N} |u|^{{N\,p/(N-s\,p)}}\,dx=1\right\}.
\end{equation}
Then we start with the following result.
\begin{proposition}
\label{prop:4cazzate}
Let $1<p<\infty$ and $s\in(0,1)$ be such that $s\,p<N$. Then:
\begin{itemize}
\item problem \eqref{costante} admits a solution;	
\vskip.2cm
\item for every $U\in D^{s,p}(\mathbb{R}^N)$ solving \eqref{costante}, there exist $x_0\in\mathbb{R}^N$ and $u:\mathbb{R}^+\to\mathbb{R}$ constant sign monotone function such that $U(x)=u(|x-x_0|)$; 
\vskip.2cm
\item every minimizer $U\in D^{s,p}(\mathbb{R}^N)$ weakly solves
\[
(-\Delta_p)^s U=\mathcal{S}_{p,s}\,|U|^{p^*-2}\,U,\qquad \mbox{ in }\mathbb{R}^N,		
\]
that is
\begin{equation}
\label{eulero}
\int_{\mathbb{R}^{2N}}  \frac{J_p(U(x)-U(y))\, \big(\varphi(x)-\varphi(y)\big)}{|x-y|^{N+s\,p}}\,dx\,dy=\mathcal{S}_{p,s}\, \int_{\mathbb{R}^N} |U|^{p^*-2}\,U\,\varphi\,dx,
\end{equation}
for every $\varphi\in D^{s,p}(\mathbb{R}^N)$.
\end{itemize}
\end{proposition}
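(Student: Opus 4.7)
The plan is to prove the three items in order; I expect the existence step to be the core difficulty, while the others follow from symmetrization arguments and standard variational calculus.

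\emph{Existence.} I would start with a minimizing sequence $\{u_n\}_{n\in\mathbb{N}}$ for \eqref{costante} with $\|u_n\|_{p^*}^{p^*}=1$ and $[u_n]_{s,p}^p\to\mathcal{S}_{p,s}$. The fractional P\'olya--Szeg\H{o} inequality (Almgren--Lieb, Frank--Seiringer) shows $[|u_n|^*]_{s,p}\leq[u_n]_{s,p}$ while preserving $\|u_n\|_{p^*}$, so I may take each $u_n$ nonnegative, radially symmetric and nonincreasing. Using the scale-invariance $u(\cdot)\mapsto\lambda^{(N-sp)/p}\,u(\lambda\,\cdot)$ of both functionals, I would renormalize so that $|\{u_n>1\}|=1$; hence $u_n\geq 1$ on a fixed ball $B_r$. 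The Radial Lemma \ref{lm:border} then yields the uniform pointwise bound $u_n(x)\leq C\,|x|^{-(N-sp)/p}$, and combined with the $D^{s,p}$-bound and the compact embedding $W^{s,p}(B_R)\hookrightarrow L^p(B_R)$ this gives a.e.\ convergence along a subsequence to a radial nonincreasing $U$ with $U\geq 1$ on $B_r$, in particular $U\not\equiv 0$. A Brezis--Lieb splitting for $[\,\cdot\,]_{s,p}^p$ and $\|\cdot\|_{p^*}^{p^*}$ yields
\[
1=\|U\|_{p^*}^{p^*}+\lim_{n\to\infty}\|u_n-U\|_{p^*}^{p^*},\qquad \mathcal{S}_{p,s}=[U]_{s,p}^p+\lim_{n\to\infty}[u_n-U]_{s,p}^p,
\]
and combining with $[U]_{s,p}^p\geq \mathcal{S}_{p,s}\,\|U\|_{p^*}^p$, $[u_n-U]_{s,p}^p\geq \mathcal{S}_{p,s}\,\|u_n-U\|_{p^*}^p$ and the strict subadditivity $(a+b)^{p/p^*}<a^{p/p^*}+b^{p/p^*}$ for $a,b>0$ (since $p/p^*<1$) forces $\|u_n-U\|_{p^*}\to 0$, so $U$ is a minimizer.

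\emph{Sign and radial symmetry.} For any minimizer $U$, the elementary pointwise inequality $|a-b|^p\geq|a_+-b_+|^p+|a_--b_-|^p$ (valid for $p\geq 1$) gives $[U_+]_{s,p}^p+[U_-]_{s,p}^p\leq[U]_{s,p}^p=\mathcal{S}_{p,s}\,\|U\|_{p^*}^p$, while $[U_\pm]_{s,p}^p\geq\mathcal{S}_{p,s}\,\|U_\pm\|_{p^*}^p$; the same strict subadditivity of $x\mapsto x^{p/p^*}$ forces $U_+\equiv 0$ or $U_-\equiv 0$, so $U$ has constant sign. Assuming $U\geq 0$, P\'olya--Szeg\H{o} applied to $U$ itself yields $[U^*]_{s,p}\leq[U]_{s,p}$ and $\|U^*\|_{p^*}=\|U\|_{p^*}$, so minimality forces equality, and the equality case of the fractional P\'olya--Szeg\H{o} inequality (Frank--Seiringer) gives $U=U^*(\cdot-x_0)$ for some $x_0\in\mathbb{R}^N$, proving the claimed representation with $u$ nonincreasing.

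\emph{Euler--Lagrange equation.} This is the standard Lagrange multiplier computation. For $\varphi\in D^{s,p}(\mathbb{R}^N)$, the maps $t\mapsto[U+t\varphi]_{s,p}^p$ and $t\mapsto\|U+t\varphi\|_{p^*}^{p^*}$ are differentiable at $t=0$ (justified by dominated convergence using the estimates \eqref{licciz} and \eqref{seconda} on $J_p$), hence criticality of $U$ under the constraint produces a multiplier $\lambda$ with
\[
p\int_{\mathbb{R}^{2N}}\frac{J_p(U(x)-U(y))\,(\varphi(x)-\varphi(y))}{|x-y|^{N+s\,p}}\,dx\,dy=\lambda\,p^*\int_{\mathbb{R}^N}|U|^{p^*-2}\,U\,\varphi\,dx;
\]
testing with $\varphi=U$ identifies $\lambda=p\,\mathcal{S}_{p,s}/p^*$, which yields \eqref{eulero}. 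The main obstacle throughout is the existence step: the critical Sobolev embedding $D^{s,p}(\mathbb{R}^N)\hookrightarrow L^{p^*}(\mathbb{R}^N)$ is noncompact and carries both translation and dilation invariance, and the symmetrization-plus-renormalization trick described above is what replaces the missing compactness with the pointwise decay provided by the Radial Lemma.
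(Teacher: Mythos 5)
Your treatment of constant sign, radial symmetry and the Euler--Lagrange equation essentially tracks the paper's. (The paper gets constant sign from the strict inequality $\big||u(x)|-|u(y)|\big|<|u(x)-u(y)|$ when $u(x)\,u(y)<0$; your splitting $|a-b|^p\ge|a_+-b_+|^p+|a_--b_-|^p$ plus strict subadditivity of $t\mapsto t^{p/p^*}$ is an equally valid route.) Where you genuinely diverge is existence: the paper simply invokes Lions' concentration--compactness principle, whereas you sketch a self-contained symmetrization\,+\,scaling\,+\,Brezis--Lieb argument. That is a legitimate alternative strategy, but as written it has a gap at the decisive point.

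The gap is the scaling normalization. You assert that the rescaling $u\mapsto\lambda^{(N-sp)/p}u(\lambda\,\cdot)$ lets you arrange $|\{u_n>1\}|=1$. Setting $t=\lambda^{-(N-sp)/p}$, the rescaled function $v_\lambda$ satisfies $|\{v_\lambda>1\}|=t^{p^*}\mu_{u_n}(t)$, so you need a $t$ with $t^{p^*}\mu_{u_n}(t)=1$, i.e.\ you need $\sup_{t>0}t^{p^*}\mu_{u_n}(t)=\|u_n\|_{L^{p^*,\infty}}^{p^*}\ge 1$. But for any continuous strictly decreasing radial $u$ with $\|u\|_{p^*}=1$ one has, at the maximizing level $t_0$,
\[
t_0^{p^*}\mu_u(t_0)=\int_0^{t_0}p^*\,\tau^{p^*-1}\mu_u(t_0)\,d\tau<\int_0^\infty p^*\,\tau^{p^*-1}\mu_u(\tau)\,d\tau=1,
\]
so the normalization $|\{u_n>1\}|=1$ is in general \emph{unachievable}. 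What you actually need is a uniform positive lower bound on $\|u_n\|_{L^{p^*,\infty}}$ along the minimizing sequence (after which you can rescale so that $u_n\ge 1$ on a \emph{fixed but smaller} ball). That lower bound is exactly the ``no vanishing'' alternative that makes the noncompact minimization problem nontrivial, and it has to be proved, not assumed. It can be proved inside your framework: the Radial Lemma \ref{lm:border} gives $|u(x)|^{p^*-p}\le C\,\|u\|_{L^{p^*,\infty}}^{p^*-p}\,|x|^{-sp}$ for radial decreasing $u$, and Hardy's inequality \eqref{hardyin} then yields $\|u\|_{p^*}^{p^*}\le C\,\|u\|_{L^{p^*,\infty}}^{p^*-p}\,[u]_{s,p}^p$, which along a bounded minimizing sequence with unit $L^{p^*}$ norm forces $\|u_n\|_{L^{p^*,\infty}}\ge c>0$. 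Without such a step, the weak limit $U$ could well be $0$ and your Brezis--Lieb plus strict subadditivity argument would conclude nothing.
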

\begin{proof}
The existence of a solution for \eqref{costante} follows from the Concentration-Compactness Principle, see \cite[Section I.4, Example iii)]{Lions}.
\par
It is not difficult to show that every solution of \eqref{costante} must have costant sign. Indeed, for every admissible $u\in D^{s,p}(\mathbb{R}^N)$, still by \eqref{valorassoluto} the function $|u|$ is still admissible and does not increase the value of the functional.
More important, the inequality sign in \eqref{valorassoluto} is strict if $u(x)\,u(y)<0$, i.e. if $u$ changes sign. 
\par
Radial symmetry of the solutions comes from the {\it P\'olya-Szeg\H{o} principle} for Gagliardo seminorms (see \cite{AlLi}), i.e. for every non-negative function $u\in D^{s,p}(\mathbb{R}^N)$ we have
\begin{equation}
\label{sp}
[u^\#]^p_{s,p}\le [u]^p_{s,p}.
\end{equation}
Here $u^\#$ denotes the radially symmetric decreasing rearrangement of $u$. It is crucial to observe that inequality \eqref{sp} is strict, unless $u$ is (up to a translation) a radially symmetric decreasing function, see \cite[Theorem A.1]{FS}.
\par
Finally, if $U$ solves \eqref{costante}, then it minimizes as well the functional
\[
u\mapsto [u]^p_{s,p}-\mathcal{S}_{p,s}\,\left(\int_{\mathbb{R}^N} |u|^{p^*}\,dx\right)^\frac{p}{p^*}.
\]
Equation \eqref{eulero} is exactly the Euler-Lagrange equation associated with this functional, once it is observed that $U$ has unitary $L^{p^*}$ norm.
\end{proof}
\begin{proposition}[Global boundedness]
Let $U\in D^{s,p}(\mathbb{R}^N)$ be a non-negative solution of \eqref{costante}. Then we have $U\in L^\infty(\mathbb{R}^N)\cap C^0(\mathbb{R}^N)$.
\end{proposition}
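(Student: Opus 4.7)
The plan is to obtain $U\in L^\infty(\mathbb{R}^N)$ by a Moser iteration with truncated test functions applied to the Euler--Lagrange equation \eqref{eulero}, and then to deduce continuity from the local H\"older regularity theory for bounded weak solutions of $(-\Delta_p)^s$ with bounded right-hand side, as developed in \cite{IMS,KMS1}. Since $U\ge 0$, absolute values can be dropped throughout. The main obstacle is the critical growth $U^{p^*-1}$ of the nonlinearity, which prevents a direct subcritical Moser scheme from closing; its resolution requires a Brezis--Kato type absorption based on the smallness of the $L^{p^*}$ mass of $U$ on super-level sets $\{U>L\}$.

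For $\beta\ge 1$ and $M>0$, writing $T_M(t):=\min\{t,M\}$, I would test \eqref{eulero} with $\varphi:=U\,T_M(U)^{p(\beta-1)}$, which is admissible in $D^{s,p}(\mathbb{R}^N)$ because $\varphi\le M^{p(\beta-1)}U\in L^{p^*}$ and $T_M$ is $1$-Lipschitz. Applying the algebraic inequality (of the type established in \cite{BraLinPar,IMS,IMS2})
\[
J_p(a-b)\bigl(a\,T_M(a)^{p(\beta-1)}-b\,T_M(b)^{p(\beta-1)}\bigr)\ \ge\ \frac{c}{\beta^{p-1}}\bigl|a\,T_M(a)^{\beta-1}-b\,T_M(b)^{\beta-1}\bigr|^p,\qquad a,b\ge 0,
\]
together with the Sobolev inequality $\mathcal{S}_{p,s}\|v\|_{p^*}^p\le [v]_{s,p}^p$ applied to $v:=U\,T_M(U)^{\beta-1}$, one obtains
\[
\|v\|_{p^*}^p\ \le\ C\,\beta^{p-1}\int_{\mathbb{R}^N} U^{p^*-p}\,v^p\,dx.
\]
Splitting this integral over $\{U\le L\}\cup\{U>L\}$ and applying H\"older with exponents $p^*/(p^*-p)$ and $p^*/p$ on the high part produces the factor $\varepsilon(L):=\bigl(\int_{\{U>L\}}U^{p^*}\bigr)^{(p^*-p)/p^*}$, which tends to $0$ as $L\to\infty$. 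Choosing $L=L(\beta)$ so that $C\beta^{p-1}\varepsilon(L)\le 1/2$, absorbing on the left, and letting $M\to\infty$ by monotone convergence (valid whenever $U\in L^{\beta p}$) yields the recursion
\[
\|U\|_{\beta p^*}^{\beta p}\ \le\ C\,\beta^{p-1}\,L(\beta)^{p^*-p}\,\|U\|_{\beta p}^{\beta p}.
\]

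Iterating along $\beta_{n+1}=(p^*/p)\,\beta_n$ starting from $\beta_0=p^*/p$ first yields $U\in L^q(\mathbb{R}^N)$ for every $q<\infty$ (the Brezis--Kato step); this upgrade then forces $\int_{\{U>L\}}U^{p^*}$ to decay polynomially in $L$, so $L(\beta)$ can be taken to grow only polynomially in $\beta$, and the series $\sum_n (\beta_n p)^{-1}\log\bigl(C\beta_n^{p-1}L(\beta_n)^{p^*-p}\bigr)$ is dominated by a convergent geometric one (since $\beta_n\sim(p^*/p)^n$). Passing to the limit $n\to\infty$ in the Moser recursion gives $\|U\|_\infty<\infty$. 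Once $U\in L^\infty(\mathbb{R}^N)$ is established, the right-hand side of \eqref{eulero} lies in $L^\infty(\mathbb{R}^N)$, and the local H\"older regularity for weak solutions of $(-\Delta_p)^s u=f$ with $f\in L^\infty_{\mathrm{loc}}$ proved in \cite{IMS,KMS1} supplies $U\in C^0(\mathbb{R}^N)$. The delicate point remains the interaction between the critical exponent and the Moser iteration, which only closes thanks to the absorption trick combined with the preliminary integrability improvement from Brezis--Kato.
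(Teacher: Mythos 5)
Your proposal is correct in spirit, but it takes a genuinely different and substantially longer route than the paper. You run a full global Moser iteration to $L^\infty$, which requires two passes: first an intermediate Brezis--Kato pass to get $U\in L^q$ for every finite $q$, and then a second pass where you argue that the truncation level $L(\beta)$ at each step can be taken to grow only polynomially in $\beta$, so that the telescoped product converges. The paper instead performs a \emph{single} Moser-type step: it tests \eqref{eulero} with $\varphi=U\min\{U,M\}^{\alpha-1}$ for one fixed $\alpha>1$, chosen so that after the same absorption trick (smallness of $\int_{\{U>K_0\}}U^{p^*}$) the resulting $L^q$ integrability satisfies $q(p^*-1)$ with $q>N/(s\,p)$; this places $U^{p^*-1}\in L^q$ for some $q>N/(s\,p)$, at which point the local boundedness theorem \cite[Theorem 3.8]{BP} delivers $U\in L^\infty_{\mathrm{loc}}$, and the monotone radial structure from Proposition \ref{prop:4cazzate} upgrades this to global $L^\infty$. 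Continuity is then quoted from \cite[Theorem 3.13]{BP} (or \cite[Theorem 5.4]{IMS}). Both arguments pivot on the same absorption idea at the critical exponent and on the same family of truncated power test functions, so the key insight is shared; what you gain by iterating all the way is independence from the local boundedness machinery of \cite{BP} (and from the radiality of $U$, which the paper uses to pass from $L^\infty_{\mathrm{loc}}$ to $L^\infty$), while what the paper gains is brevity. One small bookkeeping remark: with $v=U\,T_M(U)^{\beta-1}$ and the test function $U\,T_M(U)^{p(\beta-1)}$, the correct constant in the algebraic lower bound is $c\,\beta^{-p}$ rather than $c\,\beta^{1-p}$ (this corresponds to the factor $(p/(p+\alpha-1))^p$ in the paper's footnote with $\alpha-1=p(\beta-1)$); this does not affect the convergence of the iteration, since both powers produce geometrically decaying summands.
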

\begin{proof}
Thanks to the properties of the minimizers contained in Proposition \ref{prop:4cazzate}, it is  enough to prove that $U\in L^\infty_{\mathrm{\mathrm{loc}}}(\mathbb{R}^N)$, since continuity then follows from \cite[Theorem 3.13]{BP} (see also \cite[Theorem 5.4]{IMS} for a direct proof).
With this aim, we just need to show that $U\in L^{q\,(p^*-1)}(\mathbb{R}^N)$ for some $q>N/(s\,p)$. This would imply that
\[
U^{p^*-1}\in L^q(\mathbb{R}^N),\qquad \mbox{ for some }q>\frac{N}{s\,p}, 
\]
and thus $U\in L^\infty_{\mathrm{loc}}(\mathbb{R}^N)$ would automatically follow by \cite[Theorem 3.8]{BP}.
\vskip.2cm\noindent
Let $M>0$ and $\alpha>1$, we set for simplicity $U_M=\min\{U,M\}$ and $g_{\alpha,M}(t)=t\,\min\{t,\,M\}^{\alpha-1}$. Then we insert in \eqref{eulero} the test function $\varphi=g_{\alpha,M}(U)\in D^{s,p}(\mathbb{R}^N)$. This yields
\[
\begin{split}
\int_{\mathbb{R}^{2N}} & \frac{J_p(U(x)-U(y))\, \big(g_{\alpha,M}(U(x))-g_{\alpha,M}(U(y))\big)}{|x-y|^{N+s\,p}}\,dx\,dy=\mathcal{S}_{p,s}\, \int_{\mathbb{R}^N} U^{p^*-p}\,U_{M}^{\alpha-1}\,U^p\,dx.\\
\end{split}
\]
We now observe that if we set
\[
G_{\alpha,M}(t)=\int_0^t g'_{\alpha,M}(\tau)^\frac{1}{p}\,d\tau,
\]
by using \cite[Lemma A.2]{BP} from the previous identity with simple manipulations we get
\[
\begin{split}
\int_{\mathbb{R}^{2N}} &\frac{|G_{\alpha,M}(U(x))-G_{\alpha,M}(U(y))|^p}{|x-y|^{N+s\,p}}\,dx\,dy\\
&\le \mathcal{S}_{p,s}\, \left[K_0^{\alpha-1}\,\int_{\mathbb{R}^N} U^{p^*}\,dx+\left(\int_{\{U\ge K_0\}} U^{p^*}\,dx\right)^\frac{p^*-p}{p^*}\,\left(\int_{\mathbb{R}^N} \left(U_{M}^{(\alpha-1)}\,U^{p}\right)^\frac{p^*}{p}\,dx\right)^\frac{p}{p^*}\right],
\end{split}
\]
for some $K_0>0$ that will be chosen in a while.
If we estimate from below the left-hand side by Sobolev inequality and use that $U$ has unitary norm, we get\footnote{Here we use that 
\[
G_{\alpha,M}(t)\ge \frac{p}{p+\alpha-1}\, t\,\min\{t,\,M\}^\frac{\alpha-1}{p}.	
\]}
\begin{equation}
\label{intermediate}
\begin{split}
\left(\frac{p}{p+\alpha-1}\right)^p\,\left(\int_{\mathbb{R}^N} \right.&\left.\left(U^p\,U_M^{(\alpha-1)}\right)^\frac{p^*}{p}\,dx\right)^\frac{p}{p^*}\le K_0^{\alpha-1}\, \\
&+\left(\int_{\{U\ge K_0\}} U^{p^*}\,dx\right)^\frac{p^*-p}{p^*}\,\left(\int_{\mathbb{R}^N} \left(U_M^{(\alpha-1)}\,U^{p}\right)^\frac{p^*}{p}\,dx\right)^\frac{p}{p^*}.
\end{split}
\end{equation}
We now choose the parameters: we first take $\alpha>1$ such that
\[
p^*+(\alpha-1)\,\frac{p^*}{p}=q\,(p^*-1),\qquad \mbox{ i.\,e. } \quad \alpha=p\,q\,\frac{(p^*-1)}{p^*}-(p-1),
\]
where $q>N/(s\,p)$,
then we choose $K_0=K_0(\alpha,U)>0$ such that
\[
\left(\int_{\{U\ge K_0\}} U^{p^*}\,dx\right)^\frac{p^*-p}{p^*}\le \frac{1}{2}\,\left(\frac{p}{p+\alpha-1}\right)^p.
\]
With this choice we can absorb the last term on the right-hand side of \eqref{intermediate} and thus obtain
\[
\left(\frac{p}{p+\alpha-1}\right)^p\,	\left(\int_{\mathbb{R}^N} U^{p^*}\,U_M^{(\alpha-1)\,\frac{p^*}{p}}\,dx\right)^\frac{p}{p^*}\le 2\,K_0^{\alpha-1}.
\]
If we now take the limit as $M$ goes to $+\infty$, we finally get that $U\in L^{q\,(p^*-1)}(\mathbb{R}^N)$ for some $q>N/(s\,p)$, together with the estimate
\[
\left\|U^{p^*-1}\right\|^q_{q}\le \left(2\,K_0^{\alpha-1}\,\left(\frac{p+\alpha-1}{p}\right)^p\right)^\frac{p^*}{p},
\] 
and thus the conclusion.
\end{proof}

\begin{proposition}[Borderline Lorentz estimate]
\label{lp-weak}
Let $U\in D^{s,p}(\mathbb{R}^N)$ be a non-negative solution of \eqref{costante}. Then
\begin{equation}
\label{cazzata}
U\in L^q(\mathbb{R}^N),\qquad \mbox{ for every } q>q_0:=\frac{(p-1)\,N}{N-s\,p}.
\end{equation}
Moreover, we have $U\in L^{q_0,\infty}(\mathbb{R}^N)$ with the estimate
\begin{equation}
\label{stimalpdebole}
\sup_{t>0}t\,|\{U> t\}|^\frac{1}{q_0}\leq \|U\|_{p^*-1}^{\frac{p^*-1}{p-1}}.
\end{equation}
\end{proposition}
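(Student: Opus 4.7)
My plan is to test the Euler-Lagrange equation \eqref{eulero} against the Lipschitz truncation $\varphi=\min(U,t)\in D^{s,p}(\mathbb{R}^N)$, which is admissible because $a\mapsto\min(a,t)$ is $1$-Lipschitz and $\min(U,t)\le U$. Via a Stroock-Varopoulos-type inequality for the fractional $p$-Laplacian (relying on the pointwise algebraic estimates from Section~\ref{sec:2}, namely \eqref{27} for $p\ge 2$ and \eqref{ineqb} for $p\le 2$), one then has
\[
[\min(U,t)]_{s,p}^p\le \int_{\mathbb{R}^{2N}}\frac{J_p(U(x)-U(y))\bigl(\min(U,t)(x)-\min(U,t)(y)\bigr)}{|x-y|^{N+s\,p}}\,dx\,dy=\mathcal{S}_{p,s}\int_{\mathbb{R}^N}U^{p^*-1}\min(U,t)\,dx,
\]
where the equality is the equation itself. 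Combined with the fractional Sobolev inequality $\mathcal{S}_{p,s}\,\|\min(U,t)\|_{p^*}^p\le [\min(U,t)]_{s,p}^p$, this produces the master estimate
\[
\|\min(U,t)\|_{p^*}^p\le \int_{\mathbb{R}^N}U^{p^*-1}\min(U,t)\,dx.
\]

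The two sides are now bounded in elementary ways. Since $\min(U,t)\ge t\,\chi_{\{U>t\}}$ the left-hand side is at least $t^p\,|\{U>t\}|^{p/p^*}$; since $\min(U,t)\le t$ pointwise, the right-hand side is at most $t\,\|U\|_{p^*-1}^{p^*-1}$. Chaining and raising to the power $1/(p-1)$, using the algebraic identity $p^*/(p\,q_0)=1/(p-1)$, gives the weak-$L^{q_0}$ bound \eqref{stimalpdebole}, which is assertion~(2).

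Assertion (1) then follows at once from (2) by feeding the information into the Radial Lemma~\ref{lm:border} (applied with $\theta=\infty$ and $q=q_0$), which yields the pointwise decay $U(x)\le C\,|x|^{-N/q_0}$; since $U\in L^\infty(\mathbb{R}^N)$ by the preceding global boundedness result, the layer-cake identity
\[
\int_{\mathbb{R}^N} U^q\,dx = q\int_0^{\|U\|_\infty}\tau^{q-1}\,|\{U>\tau\}|\,d\tau
\]
is finite precisely when $q>q_0$.

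The delicate point I expect to navigate is an apparent circularity: the right-hand side of \eqref{stimalpdebole} involves $\|U\|_{p^*-1}$, and since $p^*-1>q_0$, its finiteness is itself an instance of~(1). This is harmless because for every fixed $t>0$ the integral $\int U^{p^*-1}\min(U,t)\,dx$ is finite — decompose it as $\int_{\{U\le t\}}U^{p^*}\,dx + t\int_{\{U>t\}}U^{p^*-1}\,dx$ and use $U\in L^\infty$ together with $|\{U>t\}|<+\infty$ — so the master inequality is always meaningful. The estimate \eqref{stimalpdebole} is therefore either informative or trivially true, and in either case (1) is established jointly with (2); if a self-contained justification of $\|U\|_{p^*-1}<+\infty$ is needed, one may run the same scheme on compactly supported truncations $\min(U,t)\,\eta_R$ and pass to the limit $R\to+\infty$.
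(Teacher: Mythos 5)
Your Part~II argument — testing \eqref{eulero} with $g_t(U)=\min\{U,t\}$, applying the Stroock--Varopoulos inequality (the paper's reference is \cite[Lemma A.2]{BP}) together with the Sobolev inequality, and then chaining $t^p|\{U>t\}|^{p/p^*}\le\|g_t(U)\|_{p^*}^p\le\int U^{p^*-1}g_t(U)\,dx\le t\,\|U\|_{p^*-1}^{p^*-1}$ — is precisely the paper's proof of \eqref{stimalpdebole}.

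The gap is in your derivation of \eqref{cazzata} from \eqref{stimalpdebole}. That derivation needs the right-hand side of \eqref{stimalpdebole} to be finite, i.e.\ $U\in L^{p^*-1}$. Since $p^*-1-q_0=\frac{s\,p}{N-s\,p}>0$, this is itself a nontrivial instance of \eqref{cazzata}; the only a priori information, $U\in L^{p^*}\cap L^\infty$, gives nothing below $L^{p^*}$. Your fallback — observing that $\int U^{p^*-1}\min(U,t)\,dx$ is finite (indeed $\le 1$ since $U^{p^*-1}\min(U,t)\le U^{p^*}$) — recovers only the Chebyshev bound $|\{U>t\}|\le t^{-p^*}$ and does not improve to the weak-$L^{q_0}$ estimate. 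Writing the integral as $t\,\|U\|_{p^*-1}^{p^*-1}$ \emph{requires} the finiteness of $\|U\|_{p^*-1}$, and if it were infinite then \eqref{stimalpdebole} would be vacuous and the Radial Lemma would yield no pointwise decay, so ``(1) is established jointly with (2)'' does not follow. Cutting off by $\eta_R$ and letting $R\to\infty$ does not help either, since the issue is not dominated convergence but the size of the limiting integral. The paper resolves this by proving \eqref{cazzata} first and independently, via the rather different test function $\psi_\varepsilon(U)$ with $\Psi_\varepsilon(t)=t\,(\varepsilon+t)^{(\alpha-1)/p}$, $\alpha\in(0,1)$: this allows a genuine absorption argument (split the right-hand side over $\{U\le K_0\}$ and $\{U>K_0\}$, use the elementary bound $\psi_\varepsilon(t)\,t^{p-1}\le\alpha^{-1}\Psi_\varepsilon(t)^p$, pick $K_0$ so the absorbable piece has small coefficient) and gives $U\in L^{p^*(p+\alpha-1)/p}$; sending $\alpha\searrow 0$ yields $U\in L^q$ for all $q>q_0$, and in particular $U\in L^{p^*-1}$, after which Part~II goes through. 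You need some such descent scheme; the plain truncation $\min\{U,t\}$ alone cannot produce it.
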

\begin{proof}
We divide the proof in two parts: we first prove \eqref{cazzata}.
Then we will use \eqref{cazzata} to prove \eqref{stimalpdebole}.
\vskip.2cm\noindent
{\it Part I: intermediate estimate}.
Given $0<\alpha<1$ and 
$\varepsilon>0$, we take the Lipschitz increasing function $\psi_\varepsilon:[0,+\infty)\to[0,+\infty)$ 
defined as
\[
\psi_\varepsilon(t)=\int_0^t \left[(\varepsilon+\tau)^\frac{\alpha-1}{p}+\frac{\alpha-1}{p}\,\tau\,(\varepsilon+\tau)^\frac{\alpha-1-p}{p}\right]^p\,d\,\tau.
\]
We observe that
\begin{equation}
\label{geps}
0\le \psi_\varepsilon(t)\le \int_0^t (\varepsilon+t)^{\alpha-1}\,d\tau=\frac{1}{\alpha}\,[(\varepsilon+t)^\alpha-\varepsilon^\alpha]\le \frac{t^\alpha}{\alpha},
\end{equation}
where in the second inequality we used that $0<\alpha<1$.
We insert in \eqref{eulero} the test function $\varphi=\psi_\varepsilon(U)\in D^{s,p}(\mathbb{R}^N)$. This gives
\[
\begin{split}
\int_{\mathbb{R}^{2N}} & \frac{J_p(U(x)-U(y))\, \big(\psi_{\varepsilon}(U(x))-\psi_\varepsilon(U(y)\big)}{|x-y|^{N+s\,p}}\,dx\,dy=\mathcal{S}_{p,s}\, \int_{\mathbb{R}^N} U^{p^*-1}\,\psi_\varepsilon(U)\,dx.\\
\end{split}
\]
By defining
\[
\Psi_\varepsilon(t):=\int_0^t \psi'_\varepsilon(\tau)^\frac{1}{p}\,d\tau	=t\,(\varepsilon+t)^\frac{\alpha-1}{p},
\]
if we proceed as in the previous proof and use \eqref{geps}, we get
\[
\begin{split}
\left(\int_{\mathbb{R}^N} \Psi_\varepsilon(U)^{p^*}\,dx\right)^\frac{p}{p^*}&\le \frac{1}{\alpha}\,\|U\|^{p^*+\alpha-1}_{\infty}\,|\{U>K_0\}|\\
&\quad +\left(\int_{\{U\le K_0\}} U^{p^*}\,dx\right)^\frac{p^*-p}{p^*}\left(\int_{\{U\le K_0\}} \Big(\psi_\varepsilon(U)\,U^{p-1}\Big)^\frac{p^*}{p}\,dx\right)^\frac{p}{p^*},
\end{split}
\] 
for $K_0>0$. Observe that we also used the previous Proposition to assure that $U\in L^\infty(\mathbb{R}^N)$. 
From \eqref{geps} we get
\begin{equation*}
0\le \psi_\varepsilon(t)\,t^{p-1}\le \frac{1}{\alpha}\,[(\varepsilon+t)^\alpha-\varepsilon^\alpha]\,t^{p-1}
\le \quad \frac{1}{\alpha}\,(\varepsilon+t)^{\alpha-1}\,t^{p}=\frac{1}{\alpha}\, \Psi_\varepsilon(t)^p.
\end{equation*}
Thus we arrive at 
\[
\begin{split}
\left(\int_{\mathbb{R}^N} \Psi_\varepsilon(U)^{p^*}\,dx\right)^\frac{p}{p^*}&\le \frac{1}{\alpha}\,\|U\|^{p^*+\alpha-1}_{\infty}\,|\{U>K_0\}|\\
&+\frac{1}{\alpha}\,\left(\int_{\{U\le K_0\}} U^{p^*}\,dx\right)^\frac{p^*-p}{p^*}\,\left(\int_{\mathbb{R}^N} \Psi_\varepsilon(U)^{p^*}\,dx\right)^\frac{p}{p^*},
\end{split}
\]
The level $K_0=K_0(\alpha,U)>0$ is now chosen so that
\[
\left(\int_{\{U\le K_0\}} U^{p^*}\,dx\right)^\frac{p^*-p}{p^*}\le \frac{\alpha}{2},
\]
which yields
\[
\left(\int_{\mathbb{R}^N} \left(U\,(U+\varepsilon)^\frac{\alpha-1}{p}\right)^{p^*}\,dx\right)^\frac{p}{p^*}\le \frac{2}{\alpha}\,\|U\|^{p^*+\alpha-1}_{\infty}\,|\{U>K_0\}|,
\]
for every $0<\alpha<1$. By taking the limit as $\varepsilon$ goes to $0$, we get the desired integrability \eqref{cazzata}. 
\vskip.2cm\noindent
{\it Part II: borderline Lorentz estimate.} We now prove \eqref{stimalpdebole}. 
For any $t>0$ we let $g_t(s)=\min\{t,\,s\}$, and define 
\[
G_t(s)=\int_0^sg'_t(\tau)^{\frac 1 p}d\tau=g_t(s).
\]
We test \eqref{eulero} with $g_t(U)$ and, thanks to \cite[Lemma A.2]{BP} and Sobolev inequality we get
\[
\begin{split}
\mathcal{S}_{p,s}\,\|g_t(U)\|_{p^*}^p&\leq [g_t(U)]_{s, p}^p\leq \int_{\mathbb{R}^{2N}} \frac{J_p(U(x)-U(y))\, \big(g_t(U(x))-g_t(U(y))\big)}{|x-y|^{N+s\,p}}\,dx\,dy\\
&\leq \mathcal{S}_{p,s}\,\int_{\mathbb{R}^N} U^{p^*-1}\,g_t(U)\,dx.
\end{split}
\]
We have $U\in L^{p^*-1}(\mathbb{R}^N)$, by choosing  $q=p^*-1$ in \eqref{cazzata}. Thus we conclude that
\[
t\,|\{U>t\}|^{\frac{1}{p^*}}\leq \|g_t(U)\|_{p^*}\leq \left(\int_{\mathbb{R}^N} U^{p^*-1}g_t(U)\,dx\right)^\frac{1}{p}\leq t^\frac{1}{p}\,\|U\|_{p^*-1}^\frac{p^*-1}{p}.
\]
This finally yields \eqref{stimalpdebole}, after some elementary manipulations.
\end{proof}

\subsection{Decay estimates}

As an intermediate step towards the proof of the asymptotic result \eqref{sciarpa2}, in this subsection we will prove that any (positive) solution of \eqref{costante} verifies
\[
\frac{1}{C}\,|x|^{-\frac{N-s\,p}{p-1}}\le U(x)\le C\,|x|^{-\frac{N-s\,p}{p-1}},\qquad |x|> 1,
\]
for some $C=C(N,p,s,U)>1$, 
see Corollary \ref{lower-Dec} below.
\vskip.2cm\noindent
In what follows, we will set for simplicity
\[
\Gamma(x)=|x|^{-\frac{N-s\,p}{p-1}},\qquad x\in\mathbb{R}^N\setminus\{0\},
\]
and
\begin{equation}
\label{stroncata}
\widetilde \Gamma(x)=\min\{1,\,\Gamma(x)\}=\min\left\{1,\, |x|^{-\frac{N-s\,p}{p-1}}\right\},\qquad x\in\mathbb{R}^N.
\end{equation}
The following expedient result will be useful.
\begin{lemma}
\label{lm:stroncalaplaciani!}
With the notation above, we have
\begin{equation}
\label{splv}
\frac{1}{C}\,\,|x|^{-N-s\,p}\leq (-\Delta_p)^s \widetilde\Gamma(x)\leq C\,|x|^{-N-s\,p},\qquad  \mbox{ for }|x|>R>1,
\end{equation}
in weak sense, for some $C=C(N,p,s,R)>1$. The constant blows-up as $R$ goes to $1$.
\end{lemma}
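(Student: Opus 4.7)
My plan is to exploit the fact (Theorem \ref{fundamental}) that $\Gamma$ is weakly $(-\Delta_p)^s$-harmonic in $\mathbb{R}^N \setminus B_r$, and to treat $\widetilde\Gamma$ as a compactly supported perturbation of $\Gamma$. Set $v(y) := (\Gamma(y) - 1)_+$, supported in $K := \overline{B_1}$, so that $\widetilde\Gamma = \Gamma - v$. Fix $R > 1$ and let $\Omega := \mathbb{R}^N \setminus \overline{B_R}$. All hypotheses of Proposition \ref{nonlocalb} are met: $\mathrm{dist}(K, \Omega) = R - 1 > 0$; $v^{p-1}$ is integrable on $B_1$ because $\Gamma(y)^{p-1} = |y|^{-(N-s\,p)}$ is locally integrable around the origin; and $\Gamma \in \widetilde{D}^{s,p}(\Omega)$ with $(-\Delta_p)^s\Gamma = 0$ weakly in $\Omega$ by the cited theorem. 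Applying the proposition with $u = \Gamma$ and perturbation $-v$ yields, for a.e.\ $x \in \Omega$,
\[
(-\Delta_p)^s\widetilde\Gamma(x) = 2\int_{B_1} \frac{J_p\bigl((\Gamma(x) - \Gamma(y)) + v(y)\bigr) - J_p\bigl(\Gamma(x) - \Gamma(y)\bigr)}{|x - y|^{N+s\,p}}\,dy.
\]

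The cornerstone of the argument is a telescoping cancellation: on $\mathrm{supp}(v) = \overline{B_1}$ one has $v(y) = \Gamma(y) - 1$, so
\[
(\Gamma(x) - \Gamma(y)) + v(y) = \Gamma(x) - 1
\]
is independent of $y$. Writing $t := 1 - \Gamma(x) \in (0, 1)$ for $|x| > 1$ and $\sigma(y) := \Gamma(y) - 1 \geq 0$, the formula collapses to
\[
(-\Delta_p)^s\widetilde\Gamma(x) = 2\int_{B_1} \frac{(t + \sigma(y))^{p-1} - t^{p-1}}{|x - y|^{N+s\,p}}\,dy,
\]
with a manifestly nonnegative integrand.

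From here the bounds are elementary. For $|x| > R$ and $y \in B_1$, $|x - y| \asymp |x|$ with constants depending on $R$ and blowing up as $R \searrow 1$. The upper bound follows by dropping $-t^{p-1}$ and using $(t+\sigma(y))^{p-1} \leq (1+\sigma(y))^{p-1} = |y|^{-(N-s\,p)}$, which is integrable over $B_1$. For the lower bound, restrict the integration to $B_{1/2}$, where monotonicity in $\sigma$ and the bound $\sigma(y) \geq \sigma_0 := 2^{(N-s\,p)/(p-1)} - 1 > 0$ give $(t+\sigma(y))^{p-1} - t^{p-1} \geq (t + \sigma_0)^{p-1} - t^{p-1}$; a continuity/compactness argument on $t \in [1 - R^{-(N-s\,p)/(p-1)}, 1]$ then provides a uniform positive lower bound. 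The only delicate step I anticipate is the bookkeeping needed to verify the hypotheses of Proposition \ref{nonlocalb} (membership of $\Gamma$ in $\widetilde{D}^{s,p}(\Omega)$ and integrability of $v^{p-1}$), but these all reduce to the trivial fact $N - s\,p < N$; once the pointwise formula is in hand, the algebraic cancellation above makes the proof transparent and uniform in $p \in (1, \infty)$.
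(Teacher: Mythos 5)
Your proposal is correct and follows the same skeleton as the paper's proof: invoke Theorem \ref{fundamental} for the $(-\Delta_p)^s$-harmonicity of $\Gamma$ in $\overline{B_R}^{\,c}$, write $\widetilde\Gamma = \Gamma - (\Gamma-1)_+$, apply Proposition \ref{nonlocalb} to obtain the pointwise formula (your ``telescoping'' observation that $(\Gamma(x)-\Gamma(y))+v(y)=\Gamma(x)-1$ on $B_1$ is exactly the paper's \eqref{stroncalaplaciano}), and then bound the kernel by $|x-y|\asymp |x|$. Your upper bound is identical to the paper's. The one place you genuinely diverge is the lower bound: the paper splits into $p\ge 2$ (superadditivity of $J_p$ on $[0,\infty)$, giving the integrand $\ge (\Gamma(y)-1)^{p-1}$) and $1<p<2$ (the monotonicity inequality \eqref{seconda}, giving $\ge C\,\Gamma(y)^{p-1}(1-1/\Gamma(y))$), while you restrict the integral to $B_{1/2}$, use monotonicity in $\sigma$, and then take a uniform positive minimum of $t\mapsto (t+\sigma_0)^{p-1}-t^{p-1}$ over $t\in[0,1]$. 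Your route is arguably cleaner since it treats all $p\in(1,\infty)$ in one stroke and relies only on the trivial fact that a positive continuous function on a compact interval has a positive minimum (which you could in fact make explicit: the minimum sits at $t=0$ when $p\ge2$ and at $t=1$ when $p<2$); the paper's route is more explicit about the integrand on all of $B_1$ and does not need to discard the annulus $B_1\setminus B_{1/2}$. Both are correct and of comparable effort.
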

\begin{proof}
From Theorem \ref{fundamental}, we know that $\Gamma$ belongs to $\widetilde{D}^{s,p}(B_R^c)$ and
is a weak solution of $(-\Delta_p)^su=0$ in $B_R^c$ for any $R>1$. We then observe that the truncated function $\widetilde \Gamma$ can be written as
\[
\widetilde \Gamma(x)=\Gamma(x)-\big(\Gamma(x)-1\big)_+.
\]
Thus we apply Proposition \ref{nonlocalb}, with the choices
\[
\Omega=B_R^c,\qquad u=\Gamma,\qquad f\equiv 0,\qquad v=-(\Gamma-1)_+,
\] 
This yields for $|x|>R$
\begin{equation}
\label{stroncalaplaciano}
\begin{split}
(-\Delta_p)^s \widetilde\Gamma (x)&=2\,\int_{B_1}\frac{J_p(\Gamma(x)-1)-J_p(\Gamma(x)-\Gamma(y))}{|x-y|^{N+s\,p}}\, dy\\
&=2\,\int_{B_1}\frac{J_p(\Gamma(y)-\Gamma(x))-J_p(1-\Gamma(x))}{|x-y|^{N+s\,p}}\, dy.
\end{split}
\end{equation}
We first prove the upper bound in \eqref{splv}. To this aim, by the monotonicity of $\Gamma$ we get
\[
(\Gamma(y)-\Gamma(x))^{p-1}-(1-\Gamma(x))^{p-1}\leq (\Gamma(y)-\Gamma(x))^{p-1}\leq \Gamma(y)^{p-1},\qquad |x|> R, \ |y|\leq 1.
\]
Moreover 
\[
|x-y|\geq \frac{R-1}{R}\,|x|,\qquad \mbox{ for all } |x|>R \mbox{ and } |y|<1.
\]
By spending these informations in \eqref{stroncalaplaciano}, we obtain
\[
(-\Delta_p)^s \widetilde\Gamma (x)\leq \left(\frac{R}{R-1}\right)^{N+s\,p}\,\frac{2}{|x|^{N+s\,p}}\int_{B_1} \Gamma(y)^{p-1}\, dy=\frac{C}{|x|^{N+s\,p}},
\]
as desired.
Observe that we also used that $\Gamma\in L^{p-1}_{\mathrm{loc}}(\mathbb{R}^N)$.
\vskip.2cm\noindent
In order to prove the lower bound, we need to distinguish between the case $1<p<2$ and the case $p\ge 2$. If $p\ge 2$, then $J_p(t)=|t|^{p-2}\,t$ is a convex superadditive function on $[0,\infty)$. Thus we get
\[
J_p(\Gamma(y)-\Gamma(x))-J_p(1-\Gamma(x))\ge J_p(\Gamma(y)-1),\qquad |x|>R>1>|y|.
\]
As for the kernel, we have
\begin{equation}
\label{boffo}
|x-y|<2\,|x|,\qquad \mbox{ if }|x|>|y|,
\end{equation}
thus in conclusion from \eqref{stroncalaplaciano} we get
\[
(-\Delta_p)^s \widetilde\Gamma (x)\ge \frac{2^{1-N-s\,p}}{|x|^{N+s\,p}}\int_{B_1} \big(\Gamma(y)-1\big)^{p-1}\, dy=\frac{C}{|x|^{N+s\,p}}.
\]
Using again that $\Gamma\in L^{p-1}_{\mathrm{loc}}(\mathbb{R}^N)$ and that $\Gamma>1$ in $B_1$ gives the lower bound in \eqref{splv}, in the case $p\ge 2$.
\par
In the case $1<p<2$, we need to use \eqref{seconda}, which gives
\[
\begin{split}
J_p(\Gamma(y)-\Gamma(x))-J_p(1-\Gamma(x))&\ge C\,\frac{(\Gamma(y)-1)}{\Big((\Gamma(y)-\Gamma(x))^2+(1-\Gamma(x))^2\Big)^\frac{2-p}{2}}\\
&\ge \frac{C}{2^\frac{2-p}{2}}\,\frac{(\Gamma(y)-1)}{(\Gamma(y)-\Gamma(x))^{2-p}}\ge C\,\Gamma(y)^{p-1}\, \left(1-\frac{1}{\Gamma(y)}\right).
\end{split}
\]
By using this and \eqref{boffo} in \eqref{stroncalaplaciano}, we get the desired lower bound for $1<p<2$ as well.
\end{proof}
\noindent
In order to prove a lower bound for positive radially decreasing solutions of \eqref{costante}, we need to focus on the auxiliary problem
\begin{equation}
\label{cappot}
\mathcal{I}(R)=\inf_{u\in D^{s,p}(\mathbb{R}^N)}\Big\{[u]_{s, p}^p: u\geq \chi_{B_R}\Big\}.
\end{equation}
\begin{proposition}
Let $1<p<\infty$ and $s\in(0,1)$ be such that $s\,p<N$. For any $R>0$, problem \eqref{cappot} has a unique solution $u_R>0$. Moreover, $u_R$ is radial, non-increasing and  $u_R\in D^{s,p}(\mathbb{R}^N)$ solves in weak sense
\[
\left\{\begin{array}{rcll}
(-\Delta_p)^s u_R&=&0&\text{ in }  \overline{B_R}^{\,c},\\
u_R&\equiv& 1&\mbox{ in } \overline{B_R}.
\end{array}
\right.
\]
\end{proposition}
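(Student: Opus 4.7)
The plan is to apply the direct method. Take a minimizing sequence $\{u_n\}\subset D^{s,p}(\mathbb{R}^N)$ for $\mathcal{I}(R)$. Replacing $u_n$ first by $|u_n|$ (admissible by \eqref{valorassoluto}, with the Gagliardo seminorm not increasing) and then by $\min(u_n,1)$ (still admissible, since $u_n\ge 1$ on $B_R$ forces $\min(u_n,1)=1$ there, while the $1$-Lipschitz truncation leaves the seminorm unchanged or smaller), we may assume $0\le u_n\le 1$. Boundedness of $\{[u_n]_{s,p}\}$ and uniform convexity of $D^{s,p}(\mathbb{R}^N)$ yield, up to a subsequence, $u_n\rightharpoonup u_R$; local fractional Rellich compactness gives a.e.\ convergence along a further subsequence, so the pointwise constraint $u_R\ge \chi_{B_R}$ is preserved, and weak lower semicontinuity of the seminorm shows $u_R$ attains $\mathcal{I}(R)$. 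Uniqueness follows from strict convexity of $u\mapsto [u]_{s,p}^p$ (the $p$-th power of a norm with $p>1$) on the convex admissible set.

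\textbf{Boundary datum and radial monotonicity.} Applying the truncation $u_R\mapsto \min(u_R,1)$ again to the minimizer yields $u_R\le 1$, hence $u_R\equiv 1$ on $B_R$. Since $|\{u_R\ge 1\}|\ge |B_R|$, the Schwarz rearrangement $u_R^{\#}$ satisfies $u_R^{\#}\ge \chi_{B_R}$ and, by \eqref{sp}, has at most the same energy; uniqueness then forces $u_R=u_R^{\#}$, i.e.\ $u_R$ is radial and non-increasing.

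\textbf{Euler-Lagrange and positivity.} For every non-negative $\varphi\in D^{s,p}(\mathbb{R}^N)$ supported in $\overline{B_R}^{\,c}$, the competitor $u_R+t\varphi$, $t\ge 0$, is admissible; differentiating at $t=0^{+}$ produces the one-sided relation $\langle(-\Delta_p)^s u_R,\varphi\rangle\ge 0$. By radial monotonicity any potential vanishing set of $u_R$ in $\overline{B_R}^{\,c}$ has the form $\{|x|\ge r_0\}$, $r_0>R$; if such a finite $r_0$ existed, then for any $x$ with $|x|>r_0$ a direct computation gives
\[
(-\Delta_p)^s u_R(x)=-2\int_{\{|y|<r_0\}}\frac{u_R(y)^{p-1}}{|x-y|^{N+s\,p}}\,dy<0,
\]
which contradicts the inequality above upon testing with a nonzero non-negative $\varphi$ supported near $x$. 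Hence $u_R>0$ in $\overline{B_R}^{\,c}$. Strict positivity allows us, for any bounded compactly supported $\varphi\in D^{s,p}_0(\overline{B_R}^{\,c})$, to make $u_R\pm t\varphi$ admissible for small $|t|$ (since on $\mathrm{supp}(\varphi)$ radial monotonicity of $u_R$ guarantees $u_R\ge u_R(\sup_{\mathrm{supp}(\varphi)}|y|)>0$); combining both signs yields $\langle(-\Delta_p)^s u_R,\varphi\rangle=0$, and a density argument extends the equality to all of $D^{s,p}_0(\overline{B_R}^{\,c})$.

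The main obstacle is precisely the last step: the unilateral obstacle constraint $u\ge \chi_{B_R}$ produces only an inequality from direct perturbation, and closing the gap to equality demands first establishing strict positivity of $u_R$ on $\overline{B_R}^{\,c}$. This positivity is secured by combining the sign structure of the nonlocal operator $(-\Delta_p)^s$ at a hypothetical zero set with the radial monotonicity derived from uniqueness and rearrangement.
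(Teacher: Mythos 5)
Your proof is correct and follows the same overall blueprint as the paper (direct method, uniqueness from strict convexity, P\'olya--Szeg\H{o} for radiality, truncation to get $u_R\le 1$, then Euler--Lagrange), but you deviate in two spots. For the bound $u_R\le 1$ you observe directly that the $1$-Lipschitz map $t\mapsto\min\{t,1\}$ does not increase the Gagliardo seminorm, whereas the paper invokes the submodularity inequality from \cite[Remark 3.3]{GM}; the two are interchangeable here. The more substantive difference is in the Euler--Lagrange step. The paper refers back to the proof of Proposition \ref{prop:4cazzate}, which treats a normalization constraint rather than a unilateral obstacle, and thus does not explicitly confront the possibility that $u_R$ touches the lower obstacle $0$ somewhere in $\overline{B_R}^{\,c}$ (in which case only a variational inequality, not an equation, is available from one-sided perturbations). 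You close this gap: you first derive $\langle(-\Delta_p)^s u_R,\varphi\rangle\ge 0$ for nonnegative test functions, then rule out a vanishing set $\{|x|\ge r_0\}$ by a legitimate pointwise evaluation (the principal value is absolutely convergent there since $u_R$ is locally constant) showing $(-\Delta_p)^s u_R<0$, and only after securing $u_R>0$ do you pass to two-sided perturbations and invoke the density result of Theorem \ref{density}. This is a genuine strengthening of what the paper records; an alternative route to the needed positivity would be the weak Harnack inequality of \cite[Theorem 5.2]{IMS}, which the paper uses later in Corollary \ref{lower-Dec}. Two minor remarks: the initial replacement $u_n\mapsto|u_n|$ is vacuous, since the constraint $u_n\ge\chi_{B_R}\ge 0$ already forces $u_n\ge 0$; and the conclusion $u_R\equiv 1$ on the \emph{closed} ball $\overline{B_R}$ implicitly uses continuity of $u_R$, which the paper supplies later via \cite[Theorem 1.1]{IMS}.
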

\begin{proof}
The existence of a solution follows easily by using the Direct Methods. Indeed, if $\{u_n\}_{n\in\mathbb{N}}\subset D^{s,p}(\mathbb{R}^N)$ is a minimizing sequence, then a uniform bound on their Gagliardo seminorms entails a uniform bound on the $L^{p^*}$ norms, by Sobolev inequality. Thus we have weak convergence (up to a subsequence) in $L^{p^*}(\mathbb{R}^N)$ to a function $u\in D^{s,p}(\mathbb{R}^N)$. Moreover, the constraint $u_n\ge \chi_{B_R}$ is stable with respect to weak convergence and thus it passes to the limit. Consequently, $u$ is a minimizer. The uniqueness follows from strict convexity of the functional.
\par
All the other required properties of $u_R$ follow as in the proof of Proposition \ref{prop:4cazzate}, we just show that $u_R$ saturates the constraint $u_R\ge \chi_{B_R}$. For simplicity, we set $\mathcal{E}(u)=[u]_{s,p}^p$. Then from \cite[Remark 3.3]{GM} we have
\begin{equation}
\label{submodular}
\mathcal{E}(\max\{u,\, t\})+\mathcal{E}(\min\{u,\, t\})\leq \mathcal{E}(u),\qquad \mbox{ for every } u\in D^{s,p}(\mathbb{R}^N),\ t\in \mathbb{R}.
\end{equation}
In particular, $\min\{u_R,1\}$ is admissible and is still a minimizer. Thus by uniqueness it coincides with $u_R$.
\end{proof}
Thanks to Lemma \ref{lm:stroncalaplaciani!}, we can prove a decay estimate for the solution of \eqref{cappot}. 
\begin{proposition}
\label{prop:potenziale}
The solution $u_1$ of problem \eqref{cappot} with $R=1$ satisfies
\[
\frac{|x|^{-\frac{N-s\,p}{p-1}}}{C}\leq u_1(x)\leq p^\frac{1}{p-1}\,|x|^{-\frac{N-s\,p}{p-1}},\qquad \mbox{ for } |x|\geq 1,
\]
for some constant $C=C(N,p,s)>1$.
\end{proposition}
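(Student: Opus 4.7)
Both bounds will be established via the comparison principle (Theorem \ref{comparison}) applied in the exterior domain $\overline{B_1}^c$. The essential ingredient is that $u_1$, being the minimizer of \eqref{cappot}, weakly solves $(-\Delta_p)^s u_1 = 0$ in $\overline{B_1}^c$, as stated in the previous proposition.

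\emph{Upper bound.} Take as super-barrier $V(x) := p^{1/(p-1)}\,\Gamma(x)$. Since $|x|\le 1$ implies $\Gamma(x)\geq 1$, we have $V \geq p^{1/(p-1)} \geq 1 = u_1$ on $\overline{B_1}$. In $\overline{B_1}^c$, Theorem \ref{fundamental} gives
\[
(-\Delta_p)^s V = p\,(-\Delta_p)^s \Gamma = 0 = (-\Delta_p)^s u_1.
\]
Both $u_1, V \in \widetilde{D}^{s,p}(\overline{B_1}^c)$, so Theorem \ref{comparison} immediately yields $u_1 \leq V$ on $\mathbb{R}^N$, which is the claimed upper estimate.

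\emph{Lower bound.} This is the more delicate direction. Naive candidates such as $c\,\widetilde{\Gamma}$ (or rescalings thereof) turn out to be \emph{super}-solutions of $(-\Delta_p)^s$ in $\overline{B_1}^c$, as computations parallel to those in Lemma \ref{lm:stroncalaplaciani!} show, hence they cannot serve as lower barriers. I would instead fix $R > 1$, set $m := u_1(R) > 0$ (positive by continuity of $u_1$), and consider
\[
W(x) := c\,\widetilde{\Gamma}(x) + A\,\chi_{B_R}(x),
\]
with $c, A > 0$ to be tuned. Applying Proposition \ref{nonlocalb} with $u_{\text{orig}} = c\widetilde{\Gamma}$ and bump $v = A\chi_{B_R}$ expresses
\[
(-\Delta_p)^s W = c^{p-1}(-\Delta_p)^s\widetilde{\Gamma} + h,\qquad h(x) = 2\int_{B_R} \frac{J_p(\sigma(x,y) - A) - J_p(\sigma(x,y))}{|x-y|^{N+s p}}\,dy,
\]
where $\sigma(x,y) := c\widetilde{\Gamma}(x) - c\widetilde{\Gamma}(y) \leq 0$ whenever $|x| > R$ and $y \in B_R$ (since $\widetilde{\Gamma}$ is decreasing in $|x|$). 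As $J_p$ is odd and strictly increasing, the integrand is strictly negative, so $h < 0$. Quantifying this via the elementary inequalities \eqref{27} (for $p \geq 2$) and \eqref{ineqb} (for $p \in (1,2]$) from Section \ref{sec:2}, and dominating $c^{p-1}(-\Delta_p)^s\widetilde{\Gamma}$ from above using Lemma \ref{lm:stroncalaplaciani!}, one verifies that $(-\Delta_p)^s W \leq 0$ in $\overline{B_R}^c$ provided $A \geq K\,c$ for a suitable explicit constant $K = K(N,s,p,R) > 0$.

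Choosing additionally $c + A \leq m$ ensures $W \leq c + A \leq m \leq u_1$ on $\overline{B_R}$ (using monotonicity of $u_1$, so that $u_1 \geq u_1(R) = m$ there). Theorem \ref{comparison} then delivers $W \leq u_1$ in $\overline{B_R}^c$, that is, $u_1(x) \geq c\,|x|^{-(N-s p)/(p-1)}$ for $|x| > R$. The complementary range $1 \leq |x| \leq R$ is handled by the trivial bound $u_1 \geq m$, after a harmless adjustment of the constant. The principal technical obstacle is thus the quantitative sub-solution check for $W$: one has to balance the positive contribution $c^{p-1}(-\Delta_p)^s\widetilde{\Gamma} \sim c^{p-1}|x|^{-N-s p}$ against the strictly negative perturbation $h$ produced by the bump $A\chi_{B_R}$, a mechanism made rigorous precisely by Proposition \ref{nonlocalb}.
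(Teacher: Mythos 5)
Your proof is correct in spirit and takes a genuinely different route from the paper on both bounds. For the \emph{upper bound}, the paper uses a purely variational argument: the scaling identity $\mathcal{I}(R)=R^{N-s\,p}\,\mathcal{I}(1)$ combined with the submodularity inequality \eqref{submodular} yields the pointwise estimate $u_1(R)^p\,R^{N-s\,p}\le 1-(1-u_1(R))^p$, from which the constant $p^{1/(p-1)}$ drops out. Your barrier argument with $V=p^{1/(p-1)}\,\Gamma$ is shorter and is legitimate, since Theorem \ref{fundamental} and Lemma \ref{lm:sobolevpovero} place both $u_1$ and $\Gamma$ in $\widetilde{D}^{s,p}(\overline{B_1}^{\,c})$, both are $(-\Delta_p)^s$-harmonic there, and the ordering on $\overline{B_1}$ is clear; in fact you could take $V=\Gamma$ and obtain the sharper constant $1$. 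For the \emph{lower bound}, the paper perturbs $u_1$ itself: it truncates to $u=\min\{u_1,u_1(2)\}$, shows via Proposition \ref{nonlocalb} that $(-\Delta_p)^s u\ge c_1\,|x|^{-N-s\,p}$ for $|x|>3$, and compares this against $\varepsilon\,\widetilde\Gamma$ whose $(-\Delta_p)^s$ is at most $c_2\,\varepsilon^{p-1}\,|x|^{-N-s\,p}$, so that $\varepsilon$ small makes $\varepsilon\,\widetilde\Gamma$ a sub-barrier. You instead perturb $c\,\widetilde\Gamma$ by adding the bump $A\,\chi_{B_R}$ to manufacture a strict subsolution $W$; both routes are driven by the same mechanism (Proposition \ref{nonlocalb} plus a quantitative monotonicity estimate on $J_p$) and both produce a universal constant.

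One technicality in your lower bound as written: Proposition \ref{nonlocalb} requires the support $K=\mathrm{supp}(v)$ of the bump to satisfy $\mathrm{dist}(K,\Omega)>0$, but with $v=A\,\chi_{B_R}$ and $\Omega=\overline{B_R}^{\,c}$ this distance is zero. This is easily repaired, e.g.\ by taking $v=A\,\chi_{B_{R'}}$ with $1<R'<R$ (or keeping the bump on $B_R$ but working in $\Omega=\overline{B_{R'}}^{\,c}$ with $R'>R$), exactly as the paper does by putting its bump inside $\overline{B_2}$ while comparing on $\overline{B_3}^{\,c}$, and it does not affect the conclusion. Also note that the invocation of \eqref{ineqb} is not strictly necessary: since $\sigma(x,y)\in[-c,0]$ for $|x|>R$, $y\in B_{R'}$, one has $J_p(\sigma-A)-J_p(\sigma)=-(|\sigma|+A)^{p-1}+|\sigma|^{p-1}\le -A^{p-1}$ for $p\ge 2$ and $\le -(c+A)^{p-1}+c^{p-1}$ for $p\in(1,2)$, which already gives the needed negative contribution once $A\ge Kc$.
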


\begin{proof} Observe that $u_1$ is continuous due to \cite[Theorem 1.1]{IMS}. We prove the two estimates separately.
\vskip.2cm\noindent
{\em Upper bound.} 
We first observe that by using the scaling properties of the Gagliardo seminorm, we have
\begin{equation}
\label{scaling}
\mathcal{I}(R)=R^{N-s\,p}\,\mathcal{I}(1).
\end{equation}For every $R>1$, we set $u_1(R)=t \in (0,1)$. As in the previous proof, we set $\mathcal{E}(u)=[u]_{s,p}^p$.
The function $\min\{u_1,\, t\}/t$ is admissible for problem \eqref{cappot} with $B_R$, then the minimality of $u_R$ gives
\[
\mathcal{E}\left(\frac{\min\{u_1,\,t\}}{t}\right)\geq \mathcal{E}(u_R)=\mathcal{I}(R)=R^{N-s\,p}\,\mathcal{I}(1),
\]
thanks to \eqref{scaling}.
Similarly, we get
\[
\mathcal{E}\left(\frac{\max\{u_1-t,\, 0\}}{1-t}\right)\geq \mathcal{E}(u_1)=\mathcal{I}(1).
\]
then using the $p-$homogeneity of the energy and summing the previous two inequalities 
\[
\mathcal{E}(\min\{u_1,\, t\})+\mathcal{E}(\max\{u_1,\, t\})\geq \big(t^p\,R^{N-s\,p}+(1-t)^p\big)\,\mathcal{I}(1).
\]
Using the submodularity of Gagliardo seminorms \eqref{submodular} in the left-hand side and simplifying we get
\[
t^p\,R^{N-s\,p}\leq 1-(1-t)^p.
\]
By recalling the definition of $t$, we obtain
\begin{equation}
\label{decu1}
u_1(R)^p\, R^{N-s\, p}\leq 1-(1-u_1(R))^p
\end{equation}
and since $1-(1-u_1(R))^p\leq p\, u_1(R)$ we get
\[
u_1(R)\leq p^{\frac{1}{p-1}}\, R^{-\frac{N-sp}{p-1}}.
\]
\vskip.2cm\noindent
{\em Lower bound}. 
By using Proposition \ref{nonlocalb} with
\[
\Omega=\overline{B_3}^{\,c},\qquad u=u_1,\qquad f\equiv 0,\qquad v=-(u_1-u_1(2))_+,
\] 
the truncated function 
\[
u=\min\{u_1,\,u_1(2)\}=u_1-(u_1-u_1(2))_+,
\] 
satisfies weakly in $\overline{B_3}^{\,c}$
\[
\begin{split}
(-\Delta_p)^s u(x)&=2\int_{B_2}\frac{J_p(u_1(x)-u_1(2))-J_p(u_1(x)-u_1(y))}{|x-y|^{N+s\,p}}\, dy\\
&\geq 2\int_{B_1}\frac{J_p(u_1(y)-u_1(x))-J_p(u_1(2)-u_1(x))}{|x-y|^{N+s\,p}}\, dy.
\end{split}
\]
In the last passage we used that the integrand is nonnegative by the monotonicity of $u_1$. Recall that $u_1\equiv 1$ in $B_1$ and by \eqref{decu1} we have $u_1(2)<1=u_1(1)$. Then, it is readily checked that  
\[
(u_1(1)-u_1(x))^{p-1}-(u_1(2)-u_1(x))^{p-1}\geq c,\qquad  \mbox{ for }|x|>3,
\]
for some constant $c=c(p,u_1(1)-u_1(2))>0$.
Since also $|x-y|\leq 2\,|x|$ for all $x\in B_2^c$ and $y\in B_1$, the previous discussion yields
\begin{equation}
\label{stimau}
(-\Delta_p)^s u(x)\geq \frac{2\,c\,|B_1|}{(2\,|x|)^{N+s\,p}}=:\frac{c_1}{|x|^{N+s\,p}},\qquad \mbox{ for }|x|>3.
\end{equation} 
On the other hand, from Lemma \ref{lm:stroncalaplaciani!}, for every $\varepsilon>0$ we have \begin{equation}
\label{stimavepsilon}
(-\Delta_p)^s (\varepsilon\,\widetilde\Gamma (x))\le \frac{c_2}{|x|^{N+s\,p}}\,\varepsilon^{p-1},\qquad \mbox{ for } |x|>3,
\end{equation}
where $\widetilde \Gamma$ is given in \eqref{stroncata}.
Now choose $\varepsilon>0$ as follows
\[
\varepsilon=\min\left\{u_1(3)\,3^\frac{N-s\,p}{p-1},\, \left(\frac{c_1}{c_2}\right)^\frac{1}{p-1}\right\},
\]
so that by \eqref{stimau} and \eqref{stimavepsilon} it holds
\[
\left\{\begin{array}{rcll}
(-\Delta_p)^s (\varepsilon\,\widetilde\Gamma)\!\!\! &\leq&\!\!\! (-\Delta_p)^s u&\text{ in } \overline{B_3}^{\,c},\\
\varepsilon\,\widetilde\Gamma\!\!\!&\leq&\!\!\! u &\text{ in } \overline{B_3}.
\end{array}
\right.
\]
Therefore by Theorem \ref{comparison} and the definitions of $\widetilde \Gamma$ and $u$ we have 
\[
\varepsilon\,|x|^{-\frac{N-s\,p}{p-1}}\leq u(x)=u_1(x),\quad \mbox{ for }|x|>3.
\]
In $\overline{B_3}\setminus B_1$ the estimate is simpler to obtain, indeed
\[
u_1(3)\,|x|^{-\frac{N-s\,p}{p-1}}\le u_1(3)\le u_1(x),
\]
thus we get the conclusion.
\end{proof}
Finally, we can prove the aforementioned decay estimate for solutions of \eqref{costante}.
\begin{corollary}[Sharp decay rate]
\label{lower-Dec}
Let $U\in D^{s,p}(\mathbb{R}^N)$ be a positive radially symmetric 
and decreasing solution of \eqref{costante}. Then
\[
\left(\inf_{B_1} U\right)\,\frac{|x|^{-\frac{N-s\,p}{p-1}}}{C}\le U(x)\le \left(\omega_N^{-\frac{1}{p^*}}\,\|U\|_{p^*-1}^{\frac{p^*-1}{p}}\right)^\frac{p}{p-1}\,|x|^{-\frac{N-s\,p}{p-1}},\qquad |x|\geq 1,
\]
where the constant $C=C(N,p,s)>1$ is the same of Proposition \ref{prop:potenziale}.
\end{corollary}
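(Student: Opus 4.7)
The plan is to treat the upper and lower bounds separately. The upper bound is essentially a direct consequence of Lemma \ref{lm:border}, while the lower bound follows by comparing $U$ with the capacitary potential $u_1$ of Proposition \ref{prop:potenziale}.

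For the upper bound, I would combine the weak $L^{q_0}$ estimate \eqref{stimalpdebole} with the Radial Lemma \ref{lm:border} in its $\theta=\infty$ version. Since $q_0 = (p-1)N/(N-s\,p)$, we have $N/q_0 = (N-s\,p)/(p-1)$, so Lemma \ref{lm:border} applied to the radial nonincreasing function $U$ yields
\[
U(x)\le \omega_N^{-1/q_0}\,\Big(\sup_{t>0} t\,\mu_U(t)^{1/q_0}\Big)\,|x|^{-\frac{N-s\,p}{p-1}}.
\]
Substituting \eqref{stimalpdebole} and using the identity $1/q_0 = (N-s\,p)/(N(p-1)) = p/(p^*(p-1))$, one rewrites the prefactor as $\omega_N^{-p/(p^*(p-1))}\,\|U\|_{p^*-1}^{(p^*-1)/(p-1)}$, which coincides with the quantity in the statement.

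For the lower bound, set $m:=\inf_{B_1} U$, which by radial monotonicity equals $U$ on $\partial B_1$ and satisfies $U\ge m$ throughout $\overline{B_1}$. I would compare $U$ with the rescaled capacitary potential $m\,u_1$ on the exterior domain $\Omega = \overline{B_1}^{\,c}$. On $\Omega^c = \overline{B_1}$ we have $m\,u_1\equiv m\le U$. In $\Omega$, since $u_1$ weakly solves $(-\Delta_p)^s u_1 = 0$ while $U$ satisfies the critical Euler--Lagrange equation, we get $(-\Delta_p)^s(m\,u_1) = 0 \le \mathcal{S}_{p,s}\,U^{p^*-1} = (-\Delta_p)^s U$. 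The comparison principle (Theorem \ref{comparison}) then yields $m\,u_1 \le U$ in $\Omega$, and combining with the explicit lower bound of Proposition \ref{prop:potenziale} delivers the desired estimate for $|x|\ge 1$.

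The main delicate point is the verification that both $U$ and $m\,u_1$ belong to $\widetilde{D}^{s,p}(\overline{B_1}^{\,c})$, as required by Theorem \ref{comparison}. Since $U,u_1\in D^{s,p}(\mathbb{R}^N)$, the choice $E=\overline{B_{1/2}}^{\,c}$ gives $E\supset \Omega$, $E^c=\overline{B_{1/2}}$ compact, $\mathrm{dist}(E^c,\Omega)=1/2>0$, and $[U]_{W^{s,p}(E)}\le [U]_{s,p}<\infty$ (similarly for $u_1$); the $L^{p-1}_{\mathrm{loc}}(\mathbb{R}^N)$ and $L^{p^*}(\Omega)$ requirements are automatic. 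Thus the verification is routine and the argument closes.
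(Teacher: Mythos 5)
Your proof is correct and follows essentially the same route as the paper: for the upper bound, the Radial Lemma \ref{lm:border} (with $\theta=\infty$, $q=q_0$) applied to the borderline estimate \eqref{stimalpdebole}; for the lower bound, comparison of $U$ with the rescaled capacitary potential $m\,u_1$ on $\overline{B_1}^{\,c}$ via Theorem \ref{comparison} and Proposition \ref{prop:potenziale}. The only small difference is that the paper additionally invokes the weak Harnack inequality to record that $\inf_{B_1}U>0$ (so that the lower bound is nontrivial), a point you leave implicit but which is not required for the comparison argument itself.
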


\begin{proof}
The upper bound follows from the borderline $L^{q_0,\infty}$ estimate of \eqref{stimalpdebole}, combined with the Radial Lemma \ref{lm:border}.
\par
As for the lower bound, by the weak Harnack inequality for positive supersolution of $(-\Delta_p)^s$ (see \cite[Theorem 5.2]{IMS}), we have 
\[
\lambda:=\inf_{B_1} U\ge C\,\left(\int_{B_2} U^{p-1}\,dx\right)^\frac{1}{p-1}>0.
\]
Then the function $\lambda\, u_1$ is a lower barrier for $U$ in $B_1^c$. Thus the lower bound follows from Theorem \ref{comparison} and Proposition \ref{prop:potenziale}.
\end{proof}

\section{Proof of the main result}

\label{sec:4}
In this section we still denote by $\widetilde \Gamma$ the truncated function defined by \eqref{stroncata}, while $U$ is a positive radially symmetric 
and decreasing solution of \eqref{costante}. As in the previous section we will systematically use the abuse of notation $U(x)=U(r)$ and $\widetilde \Gamma(x)=\widetilde \Gamma(r)$, for $r=|x|$.

\begin{lemma}
\label{lm:intervalli}
Suppose that
$$
U(R)\geq A\,\widetilde \Gamma(R)\qquad\text{for some $R>2$}. 
$$
For any $\delta>0$ there exists $\theta=\theta(N, p, s, \delta,U)<1$ such that 
\[
U(r)\geq (A-\delta)\,\widetilde\Gamma(r)\qquad \mbox{ for any }\theta\, R\leq r\leq R.
\]
Similarly, if 
$$
U(R)\leq B\, \widetilde \Gamma(R)\qquad \text{for some $R>2$}, 
$$
then 
\[
U(r)\leq (B+\delta)\, \widetilde\Gamma(r)\qquad \mbox{ for any }R\leq r\leq R/\theta.
\]
\end{lemma}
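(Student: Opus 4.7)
The plan is to combine the radial monotonicity of $U$ with the uniform upper bound $U(r)\le M\,\widetilde\Gamma(r)$ (for $r\ge 1$) supplied by Corollary \ref{lower-Dec}, where $M:=\sup_{r\ge 1} U(r)/\widetilde\Gamma(r)<\infty$. The hypothesis at the single point $R$ will be pushed through the monotonicity of $U$ to every $r$ in the relevant interval, and the error in replacing $\widetilde\Gamma(R)$ by $\widetilde\Gamma(r)$ is controlled by the explicit scaling $\widetilde\Gamma(R)/\widetilde\Gamma(r) = (r/R)^{(N-s\,p)/(p-1)}$, valid as soon as $r,R\ge 1$.

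For the first (lower-bound) assertion, I would fix $r\in[\theta R, R]$ with $\theta R\ge 1$. Monotonicity of $U$ and the hypothesis then give
\[
U(r)\ge U(R)\ge A\,\widetilde\Gamma(R) = A\,(r/R)^{(N-s\,p)/(p-1)}\,\widetilde\Gamma(r)\ge A\,\theta^{(N-s\,p)/(p-1)}\,\widetilde\Gamma(r),
\]
reducing the desired bound to the elementary inequality $A\,\theta^{(N-s\,p)/(p-1)}\ge A-\delta$. If $A\le\delta$ this is automatic; otherwise it amounts to $\theta\ge (1-\delta/A)^{(p-1)/(N-s\,p)}$, whose right-hand side is increasing in $A$. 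Since the hypothesis forces $A\le U(R)/\widetilde\Gamma(R)\le M$, the uniform choice
\[
\theta := (1-\delta/M)^{(p-1)/(N-s\,p)}
\]
works for every admissible $A$ (taking any $\theta<1$ if $\delta\ge M$, in which case the conclusion $(A-\delta)\widetilde\Gamma(r)\le 0\le U(r)$ is trivial).

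The second (upper-bound) assertion is entirely symmetric: for $r\in[R, R/\theta]$,
\[
U(r)\le U(R)\le B\,\widetilde\Gamma(R)\le B\,\theta^{-(N-s\,p)/(p-1)}\,\widetilde\Gamma(r),
\]
so the requirement $B\,\theta^{-(N-s\,p)/(p-1)}\le B+\delta$ reduces to $\theta\ge (1+\delta/B)^{-(p-1)/(N-s\,p)}$, which is an increasing function of $B$. The uniform choice $\theta:=(1+\delta/M)^{-(p-1)/(N-s\,p)}<1$ then handles every $B\le M$, while for $B>M$ the conclusion $U(r)\le(B+\delta)\widetilde\Gamma(r)$ follows immediately from $U\le M\,\widetilde\Gamma\le (B+\delta)\,\widetilde\Gamma$, without any restriction on $r$.

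There is no genuine obstacle in the argument; the only piece of bookkeeping is the technical constraint $\theta R\ge 1$, needed so that $\widetilde\Gamma$ coincides with the pure power law throughout the interval. Since $R>2$, this is automatic whenever $\theta\ge 1/2$, which is guaranteed for sufficiently small $\delta$; by monotonicity in $\delta$ the conclusion for larger $\delta$ follows from that for a smaller one. In short, the lemma is a pure monotonicity-plus-scaling computation anchored in the sharp two-sided bound of Corollary \ref{lower-Dec}.
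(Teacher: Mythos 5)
Your proof is correct and follows essentially the same route as the paper's: both exploit the radial monotonicity of $U$, the explicit power-law form of $\widetilde\Gamma$ on $[1,\infty)$, and the uniform upper bound $U\le C\,\widetilde\Gamma$ from Corollary~\ref{lower-Dec} to show that $U(r)/\widetilde\Gamma(r)$ cannot drop (or rise) by more than $\delta$ over a multiplicative window of width $\theta$. The paper bounds the difference $U(R)/\widetilde\Gamma(R)-U(r)/\widetilde\Gamma(r)$ directly while you substitute and scale multiplicatively, but the two computations are algebraically the same, and your handling of the trivial cases ($A\le\delta$, $B>M$) and the constraint $\theta R\ge 1$ is sound.
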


\begin{proof}
Consider the first statement and let $\theta<1$ to be determined. $U$ is non increasing and 
$$
U(r)\leq C\,r^{-\frac{N-s\,p}{p-1}}, \qquad r\ge 1,
$$ 
by Corollary \ref{lower-Dec}.
Then for any $\theta\, R\leq r\leq R$ it holds
\[
\frac{U(R)}{\widetilde\Gamma(R)}-\frac{U(r)}{\widetilde\Gamma(r)}\leq U(R)\,\left(\frac{1}{\widetilde\Gamma(R)}-\frac{1}{\widetilde\Gamma(r)}\right)\leq \frac{C}{R^{\frac{N-s\,p}{p-1}}}\,\left(R^{\frac{N-s\,p}{p-1}}-r^{\frac{N-s\,p}{p-1}}\right)\leq C\,\left(1-\theta^{\frac{N-s\,p}{p-1}}\right).
\]
Therefore by hypothesis we get
\[
\frac{U(r)}{\widetilde\Gamma(r)}\geq A-C\,\left(1-\theta^{\frac{N-s\,p}{p-1}}\right),\qquad \mbox{ for }  r\in [\theta\, R, R],
\]
which gives the first claim. The proof of the other statement is similar: for any $R\leq r\leq R/\theta$ it holds
\[
\frac{U(r)}{\widetilde\Gamma(r)}-\frac{U(R)}{\widetilde\Gamma(R)}\leq U(R)\,\left(\frac{1}{\widetilde\Gamma(r)}-\frac{1}{\widetilde\Gamma(R)}\right)\leq  \frac{C}{R^{\frac{N-sp}{p-1}}}\,\left(r^{\frac{N-s\,p}{p-1}}-R^{\frac{N-s\,p}{p-1}}\right)\leq C\,\left(\theta^{-\frac{N-s\,p}{p-1}}-1\right),
\]
which gives
\[
\frac{U(r)}{\widetilde\Gamma(r)}\leq B+C\,\left(\theta^{-\frac{N-sp}{p-1}}-1\right),\qquad \mbox{ for } r\in [R, R/\theta].
\]
This completes the proof.
\end{proof}

\noindent
We are ready for the proof of the main result. 
\begin{theorem}
There exists $U_\infty>0$ such that
$$
\lim_{r\to +\infty} r^{\frac{N-s\,p}{p-1}}U(r)=U_\infty.
$$ 
\end{theorem}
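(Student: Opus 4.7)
Plan. Let $w(r) := r^{(N-s\,p)/(p-1)}\,U(r) = U(r)/\widetilde\Gamma(r)$ for $r \ge 1$. By Corollary \ref{lower-Dec}, $w$ is bounded between two positive constants, so
\[
A := \liminf_{r\to\infty} w(r),\qquad B := \limsup_{r\to\infty} w(r)
\]
satisfy $0 < A \le B < \infty$. The conclusion $\lim_{r\to\infty} w(r) = U_\infty$ amounts to proving $A = B$ and then setting $U_\infty := A$.

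I would argue by contradiction. Suppose $A < B$ and choose $A < c < d < B$. Since $A < c$, there are arbitrarily large radii $R$ with $w(R) < c$. The aim is to show that any such $R$ forces $w(r) \le d$ for all $r \ge R$, contradicting $B > d$. The forward propagation splits into two stages. First, Lemma \ref{lm:intervalli} (upper-bound case) covers the transition annulus $[R, R/\theta]$ directly, giving $w \le c + \delta$ there for $\delta$ small and $\theta = \theta(N,p,s,\delta,U) < 1$ close to $1$. Second, Theorem \ref{comparison} is applied on $\overline{B_{R/\theta}}^{\,c}$ with the barrier $V := d\,\widetilde\Gamma$, modified inside $B_{R/\theta}$ by a compactly supported perturbation so that $V \ge U$ on $\overline{B_{R/\theta}}$; the effect of this modification on $(-\Delta_p)^s V$ in the exterior is controlled by Proposition \ref{nonlocalb}.

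The heart of the proof is the differential inequality $(-\Delta_p)^s V \ge \mathcal{S}_{p,s}\,U^{p^*-1}$ on $\overline{B_{R/\theta}}^{\,c}$. By Lemma \ref{lm:stroncalaplaciani!} the left side is of order $d^{p-1}\,r^{-(N+s\,p)}$, while by the upper bound of Corollary \ref{lower-Dec} the right side is of order $r^{-(N + s\,p/(p-1))}$. These rates coincide only at $p = 2$. The main obstacle is that they diverge for $p \ne 2$ in opposite directions: the source decays faster than $(-\Delta_p)^s(d\,\widetilde\Gamma)$ for $1 < p \le 2$ (where the naive upper barrier is a supersolution at infinity) and slower for $p > 2$ (where it fails). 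In the problematic regime one instead propagates a lower bound forward, taking the barrier $V = c'\,\widetilde\Gamma \le U$ on $\overline{B_{R/\theta}}$ and the reversed differential inequality $(-\Delta_p)^s V \le \mathcal{S}_{p,s}\,U^{p^*-1}$, which now holds for large $r$ by the same rate comparison reversed. The algebraic inequalities \eqref{licciz}, \eqref{27} for $p \ge 2$, and \eqref{seconda}, \eqref{ineqb} for $1 < p \le 2$, supply the sign control on the $J_p$-combinations in the nonlocal integral needed to verify each barrier inequality. Propagating forward a tight upper bound when $p \le 2$ yields $B \le A$; symmetrically, propagating forward a tight lower bound when $p \ge 2$ yields $A \ge B$. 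Either way $A < B$ is contradicted, so $A = B$ and the limit $U_\infty$ exists and is positive.
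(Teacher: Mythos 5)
Your high-level plan mirrors the paper closely: set $m=\liminf U/\widetilde\Gamma$, $M=\limsup U/\widetilde\Gamma$, assume $m<M$, split into $p>2$ and $1<p<2$, use the transition Lemma~\ref{lm:intervalli}, build a barrier out of $\widetilde\Gamma$ via Proposition~\ref{nonlocalb}, invoke the comparison principle of Theorem~\ref{comparison}, and close via the rate comparison between $(-\Delta_p)^s\widetilde\Gamma\sim |x|^{-(N+sp)}$ (Lemma~\ref{lm:stroncalaplaciani!}) and $U^{p^*-1}\sim |x|^{-(N+sp/(p-1))}$ (Corollary~\ref{lower-Dec}). The dichotomy you identify --- upper barrier as a supersolution for $p\le 2$, lower barrier for $p\ge 2$ --- is exactly the one used.

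However, there is a genuine gap in how you describe the barrier, and as literally stated the comparison inequality fails. You take $V=d\,\widetilde\Gamma$ ``modified inside $B_{R/\theta}$ by a compactly supported perturbation so that $V\ge U$ on $\overline{B_{R/\theta}}$.'' To enforce $V\ge U$ the natural modification is a \emph{nonnegative} bump near the origin, since $U/\widetilde\Gamma$ can be as large as $M+\varepsilon_0$ on $[R_0,R]$ and larger still inside $B_{R_0}$. But by Proposition~\ref{nonlocalb}, \emph{raising} the barrier on a compact set \emph{lowers} $(-\Delta_p)^s V$ in the exterior: with the sign convention of $h$, a positive $v$ gives $h<0$. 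Concretely, raising $V/\widetilde\Gamma$ from $d$ to $M+\varepsilon_0$ on $B_R\setminus B_{R_0}$ contributes a negative term of size $\gtrsim (M-d)^{p-1}R^{sp}/|x|^{N+sp}$, which \emph{swamps} the baseline $(-\Delta_p)^s(d\,\widetilde\Gamma)\sim d^{p-1}/|x|^{N+sp}$ as $R\to\infty$, and the rate comparison you invoke cannot repair this because it carries no $R^{sp}$ gain. The ingredient you are missing is that the paper's barrier is \emph{non-monotone} near $r=R$: for $1<p<2$ the step function drops from $M+\varepsilon_0$ down to $(M+m)/2$ on the annulus $B_{R/\sqrt\theta}\setminus B_R$ (still $\ge U$ there, precisely because Lemma~\ref{lm:intervalli} guarantees $U/\widetilde\Gamma\le(M+m)/2$ on $[R,R/\theta]$), before rising to its terminal value $M-\varepsilon$. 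That dip of size $\sim (M-m)/2$ over an annulus of width $\sim R$ is what produces a \emph{positive} contribution of order $(M-m)^{p-1}R^{sp}/|x|^{N+sp}$, via \eqref{ineqb}, and it is this $R^{sp}$-weighted gain --- not the baseline of $(-\Delta_p)^s(d\,\widetilde\Gamma)$ --- that absorbs both the $(\varepsilon+\varepsilon_0)^{p-1}R^{sp}$ error from $B_R\setminus B_{R_0}$ and the $C(\varepsilon_0)$ error from $B_{R_0}$, and then dominates $U^{p^*-1}$ for $R$ large. The symmetric point (a \emph{bump} in the lower barrier, controlled via \eqref{27}) is required for $p>2$. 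Without making this dip/bump explicit, with size tied to $M-m$, the plan does not close: the rate comparison identifies which side has a chance of working but does not by itself yield the required supersolution/subsolution inequality once the modification near the origin is taken into account.

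A minor additional point: Lemma~\ref{lm:intervalli} only bounds the transition annulus by the \emph{middle} value $(M+m)/2$, not by $c+\delta$ with $c$ close to $m$; the barrier's terminal value must therefore be chosen as $m+\varepsilon$ (or $M-\varepsilon$), not an arbitrary $d\in(m,M)$, so that the dip/bump value $(M+m)/2$ sits on the correct side of the terminal value, with gap $\sim (M-m)/2$.
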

\begin{proof}
%In the following, the dependence of the various constants from $N$, $p$ and $s$ will be omitted. Moreover, 
We can suppose that $p\neq 2$, since for $p=2$ the function $U$ has an explicit expression.
By virtue of Corollary \ref{lower-Dec} we readily have
\[
\frac{1}{C}\leq m:=\liminf_{r\to +\infty} \frac{U(r)}{\widetilde\Gamma(r)}\leq \limsup_{r\to +\infty}\frac{U(r)}{\widetilde\Gamma(r)}=:M\leq C,
\]
with $C$ depending on $U$ as well.
Suppose by contradiction that $M-m>0$, and fix $0<\varepsilon_0<(M-m)/4$.  
\vskip.2cm\noindent
$\bullet$ \underline{\em Case $p>2$}.
There exists $R_0=R_0(\varepsilon_0)>2$ such that 
\[
\frac{U(r)}{\widetilde\Gamma(r)}\geq m-\varepsilon_0,\quad \mbox{ for }r\geq R_0,
\]
and we can choose an arbitrarily large $R>R_0$ such that 
$$
\frac{U(R)}{\widetilde\Gamma(R)}\geq M-\frac{M-m}{4}.
$$ 
Consider $\delta=(M-m)/4$. By Lemma \ref{lm:intervalli}, there exists $\theta<1$ so that for any such $R$ it holds
\begin{equation}
\label{uvgeq}
\frac{U(r)}{\widetilde\Gamma(r)}\geq \frac{M+m}{2},\qquad \mbox{ for }  r\in [\theta\, R, R].
\end{equation}
Since $R$ can be chosen arbitrarily large, we can suppose 
$\theta R>R_0$ as well. Consider, for any $0<\varepsilon<(M-m)/4$, the lower barrier $w(r)=g(r)\,\widetilde\Gamma(r)$ where $g$ is the following step function
\[
g(r)=\begin{cases}
0&\text{if $r<R_0$},\\
m-\varepsilon_0&\text{if $R_0\leq r<\theta R$},\\
\frac{M+m}{2}&\text{if $\theta R\leq r<\sqrt{\theta}R$},\\
m+\varepsilon &\text{if $\sqrt{\theta}\,R<r$}.
\end{cases}
\]
It is easily seen that\footnote{As a set $E$ occuring in the definition $\widetilde{D}^{s,p}(\overline{B_R}^{\,c})$ one can take for example $E=\overline B_{\sqrt{\theta}\,R}^c$.} $w\in \widetilde{D}^{s,p}(\overline{B_R}^{\,c})$. Moreover,
by using \eqref{uvgeq}, it is readily verified that $w\leq U$ in $\overline{B_R}$. We claim that, for sufficiently small $\varepsilon_0$ and $\varepsilon$ and sufficiently large $R$,  it holds
\[
(-\Delta_p)^s w\leq (-\Delta_p)^s U,\qquad \mbox{ in }\overline{B_R}^{\,c}.
\]
This would end the proof, since Theorem \ref{comparison} would yield $U\geq w$ in $\mathbb{R}^N$ and then 
\[
m=\liminf_{r\to +\infty} r^{\frac{N-s\,p}{p-1}}\,U(r)=\liminf_{r\to +\infty} \frac{U(r)}{\widetilde\Gamma(r)}\geq \liminf_{r\to +\infty} g(r)= m+\varepsilon,
\]
giving a contradiction.
The function $w-(m+\varepsilon)\,\widetilde \Gamma$ is supported in 
$B_{\sqrt{\theta}\,R}\Subset B_{R}$ and thus using Proposition \ref{nonlocalb} with
\[
\Omega=\overline{B_R}^{\,c},\qquad u=(m+\varepsilon)\,\widetilde\Gamma,\qquad f= (-\Delta_p)^s\Big((m+\varepsilon)\,\widetilde\Gamma\Big) ,\qquad v=w-(m+\varepsilon)\,\widetilde\Gamma,
\] 
and \eqref{splv}, for any $|x|>R$ it holds 
\begin{equation}
\label{uno}
\begin{split}
(-\Delta_p)^s w(x)&= (m+\varepsilon)^{p-1}\,(-\Delta_p)^s\widetilde\Gamma(x)\\
&\quad+\int_{B_{\sqrt{\theta}\,R}}\frac{J_p\big((m+\varepsilon)\,\widetilde\Gamma(x)-w(y)\big)-J_p\big((m+\varepsilon)\,(\widetilde\Gamma(x)-\widetilde\Gamma(y))\big)}{|x-y|^{N+s\,p}}\, dy\\
&\leq \frac{C}{|x|^{N+s\,p}}+\int_{B_{\sqrt{\theta}\,R}}\frac{h(x, y)}{|x-y|^{N+s\,p}}\, dy,
\end{split}
\end{equation}
where 
\[
h(x, y)=J_p\big((m+\varepsilon)\,(\widetilde\Gamma(y)-\widetilde\Gamma(x))\big)-J_p\big(w(y)-(m+\varepsilon)\,\widetilde\Gamma(x)\big).
\]
We now decompose the last integral in \eqref{uno} as follows
\begin{equation}
\label{scassamento}
\int_{B_{\sqrt{\theta}\,R}}\, dy=\int_{B_{R_0}}\, dy+\int_{B_{\theta\,R}\setminus B_{R_0}}\, dy+\int_{B_{\sqrt{\theta}\,R}\setminus B_{\theta\,R}}\, dy,
\end{equation}
and proceed to estimate each term separately.
\par
Being $R_0=R_0(\varepsilon_0)$ and $h$ universally bounded, it holds 
\begin{equation}
\label{due}
\int_{B_{R_0}}\frac{h(x, y)}{|x-y|^{N+s\,p}}\, dy\leq \|h\|_{L^\infty(\mathbb{R}^{2\,N})}\,\frac{\omega_N\,R_0^N}{\Big||x|-R_0\Big|^{N+s\,p}}\leq \frac{C(\varepsilon_0)}{|x|^{N+s\,p}}\,\left(1-\theta\right)^{-N-s\,p},
\end{equation}
where we used that (recall that we are assuming $\theta\,R> R_0$)
\[
\Big||x|-R_0\Big|\ge \left(1-\frac{R_0}{R}\right)\,|x|\ge (1-\theta)\,|x|,\qquad \mbox{ for }|x|>R.
\]
For the second integral in \eqref{scassamento}, we notice that for $y\in B_{\theta\, R}\setminus B_{ R_0}$ and $x\in B_R^c$ we have 
\[
h(x, y)=J_p\big((m+\varepsilon)\,(\widetilde\Gamma(y)-\widetilde\Gamma(x))\big)-J_p\big((m+\varepsilon)\,(\widetilde\Gamma(y)-\widetilde\Gamma(x))-(\varepsilon+\varepsilon_0)\,\widetilde\Gamma(y)\big).
\]
Observe that by \eqref{licciz}, with simple manipulations we get
\[
\begin{split}
h(x, y)&\le c\, \Big[(m+\varepsilon)^{p-2}+(\varepsilon+\varepsilon_0)^{p-2}\Big]\,(\varepsilon+\varepsilon_0)\,\widetilde \Gamma(y),
\end{split}
\]
for $x\in B_R^c$, $y\in B_{\theta R}\setminus B_{R_0}$ and $c=c(p)>0$. Therefore, since
\[
|x-y|\ge \Big||x|-|y|\Big|\ge |x|-\theta\,R\ge \left(1-\theta\right)\,|x|,\qquad \mbox{ for } x\in  B_R^c,\ y\in B_{\theta\, R},
\] 
recalling the definition of $\widetilde\Gamma$ we get
\begin{equation}
\label{tre}
\begin{split}
\int_{B_{\theta R}\setminus B_{R_0}}\frac{h(x, y)}{|x-y|^{N+s\,p}}\, dy&\leq \frac{C\,(\varepsilon+\varepsilon_0)}{(1-\theta)^{N+s\,p}\,|x|^{N+s\,p}}\int_{B_{\theta R}\setminus B_{ R_0}}\frac{1}{|y|^{N-s\,p}}\, dy\\
&\leq  C\,\frac{(\varepsilon+\varepsilon_0)}{(1-\theta)^{N+s\,p}}\,\frac{(\theta\,R)^{s\,p}}{|x|^{N+s\,p}},
\end{split}
\end{equation}
where $C=C(N,s,p,M+m)>0$.
For the third integral in \eqref{scassamento}, for $y\in B_{\sqrt{\theta}R}\setminus B_{\theta R}$ we have
\[
\begin{split}
h(x, y)&=J_p\Big((m+\varepsilon)(\widetilde\Gamma(y)-\widetilde\Gamma(x))\Big)-J_p\left((m+\varepsilon)\,(\widetilde\Gamma(y)-\widetilde\Gamma(x))+\left(\frac{M-m}{2}-\varepsilon\right)\,\widetilde\Gamma(y)\right)\\
&\leq J_p\Big(\underbrace{(m+\varepsilon)\,(\widetilde\Gamma(y)-\widetilde\Gamma(x))}_a\Big)-J_p\Big(\underbrace{(m+\varepsilon)\,(\widetilde\Gamma(y)-\widetilde\Gamma(x))}_{a}+\underbrace{\left(\frac{M-m}{4}\right)\,\widetilde\Gamma(y)}_b\Big),
\end{split}
\]
since $\varepsilon<(M-m)/4$. The inequality \eqref{27} thus gives
\[
h(x, y) \leq -2^{2-p}\,\left(\frac{M-m}{4}\right)^{p-1}\,\widetilde\Gamma(y)^{p-1}.
\]
Therefore,  using 
\[
|x-y|\leq 2\,|x|,\qquad \mbox{ for } |x|>R, \ |y|<\sqrt{\theta}\,R,
\]
we obtain
\begin{equation}
\label{quattro}
\begin{split}
\int_{B_{\sqrt{\theta}\,R}\setminus B_{\theta\, R}}\frac{h(x, y)}{|x-y|^{N+s\,p}}\, dy &\leq -\frac{c\,(M-m)^{p-1}}{|x|^{N+s\,p}}\int_{B_{\sqrt{\theta}\,R}\setminus B_{\theta\, R}} |y|^{s\,p-N}\, dy\\
&\leq -c\,\theta^\frac{s\,p}{2}\,\Big(1-\theta^\frac{s\,p}{2}\Big)\,(M-m)^{p-1}\,\frac{R^{s\,p}}{|x|^{N+s\,p}},
\end{split}
\end{equation}
for a constant $c=c(N,s,p)>0$. 
Gathering toghether the estimates \eqref{uno}, \eqref{due}, \eqref{tre} and \eqref{quattro} we proved
\[
\begin{split}
(-\Delta_p)^s w(x)&\leq \left(C+\frac{C(\varepsilon_0)}{(1-\theta)^{N+s\,p}}\right)\,\frac{1}{|x|^{N+s\,p}}\\
&\quad -\left[c\,\Big(1-\theta^\frac{s\,p}{2}\Big)\,(M-m)^{p-1}-\frac{C\,(\varepsilon+\varepsilon_0)}{(1-\theta)^{N+s\,p}}\right]\,\frac{R^{s\,p}\,\theta^{s\,p}\,}{|x|^{N+s\,p}}.
\end{split}
\]
Thus we can choose $\varepsilon+\varepsilon_0$ small enough (depending only on $N,p,s,M-m$ and the chosen minimizer $U$), so that the second term above is negative.
For any such a choice we have, for any $|x|> R$,
\[
(-\Delta_p)^s w(x)\leq \frac{C(\varepsilon_0)}{|x|^{N+s\,p}},\qquad (-\Delta_p)^s U(x)= U(x)^{p^*-1}\geq \frac{1}{C\,|x|^{N+\frac{s\,p}{p-1}}},
\]
where in the last estimate we used Corollary \ref{lower-Dec}.
Since $p> 2$, for sufficiently large $R$ it holds
\[
 \frac{1}{C\,|x|^{N+\frac{s\,p}{p-1}}}\geq \frac{R^{s\,p\,\frac{p-2}{p-1}}}{C\,|x|^{N+s\,p}}\geq \frac{C\,(\varepsilon_0)}{|x|^{N+s\,p}},
\]
and thus the claim follows.
\vskip.2cm\noindent
$\bullet$  \underline{\em Case $1<p<2$}. There exists $R_0=R_0(\varepsilon_0)>2$ such that 
\[
\frac{U(r)}{\widetilde\Gamma(r)}\leq M+\varepsilon_0,\quad \mbox{ for } r\geq R_0
\]
and we can choose an arbitrarily large $R>R_0$ such that 
$$
\frac{U(R)}{\widetilde\Gamma(R)}\leq m+\frac{M-m}{4}. 
$$
As before, we consider $\delta=(M-m)/4$ in Lemma \ref{lm:intervalli}: there exists $\theta<1$ so that for any such $R$ it holds
\begin{equation}
\label{uvleq}
\frac{U(r)}{\widetilde\Gamma(r)}\leq \frac{M+m}{2},\qquad \mbox{ for every } r\in [R, R/\theta].
\end{equation}
Since $U\in L^\infty(\mathbb{R}^N)$, there exists $\overline C>0$ such that $U\leq \overline C\, \widetilde\Gamma$ in $\mathbb{R}^N$, then for any $0<\varepsilon<(M-m)/4$ we consider the upper barrier $w(r)=g(r)\,\widetilde\Gamma(r)$, where 
\[
g(r)=
\begin{cases}
\overline C&\text{if $r<R_0$},\\
M+\varepsilon_0&\text{if $R_0\leq r<R$},\\
\frac{M+m}{2}&\text{if $R\leq r<R/\sqrt{\theta}$},\\
M-\varepsilon &\text{if $R/\sqrt{\theta}<r$}.
\end{cases}
\]
Again, it is easy to verify that $w\in \widetilde{D}^{s,p}(\overline{B_R}^{\,c})$.
Using \eqref{uvleq}, we can verify that $w\geq U$ in $B_{R/\theta}$. We claim that, for sufficiently small $\varepsilon_0$ and $\varepsilon$ and sufficiently large $R$,  it holds
\[
(-\Delta_p)^s w\geq (-\Delta_p)^s U,\qquad \mbox{ in } B^c_{R/\theta}.
\]
This would end the proof, since the comparison principle of Theorem \ref{comparison} would yield $U\leq w$ in $\mathbb{R}^N$ and then 
\[
M=\limsup_{r\to +\infty} r^{\frac{N-s\,p}{p-1}}\,U(r)=\limsup_{r\to+\infty} \frac{U(r)}{\widetilde\Gamma(r)}\leq \limsup_{r\to +\infty} g(r)= M-\varepsilon,
\]
which gives again a contradiction. 
The function $w-(M-\varepsilon)\,\widetilde\Gamma$ is supported in 
$B_{R/\sqrt{\theta}}\Subset B_{R/\theta}$ and thus using again Proposition \ref{nonlocalb} 
with
\[
\Omega=\overline{B_{R/\theta}}^{\,c},\qquad u=(M-\varepsilon)\,\widetilde \Gamma,\qquad f= (-\Delta_p)^s\Big((M-\varepsilon)\,\widetilde\Gamma\Big) ,\qquad v=w-(M-\varepsilon)\,\widetilde \Gamma,
\] 
and \eqref{splv}, for any $|x|>R/\theta$ it holds 
\begin{equation}
\begin{split}
\label{unobis}
(-\Delta_p)^s w(x)&= (M-\varepsilon)^{p-1}\,(-\Delta_p)^s\,\widetilde\Gamma(x) \\
&\quad+\int_{B_{R/\sqrt{\theta}}}\frac{J_p\big((M-\varepsilon)\,\widetilde\Gamma(x)-w(y)\big)-J_p\big((M-\varepsilon)\,(\widetilde\Gamma(x)-\widetilde\Gamma(y))\big)}{|x-y|^{N+s\,p}}\, dy \\
&\geq \frac{1}{C\,|x|^{N+sp}}+\int_{B_{R/\sqrt{\theta}}}\frac{h(x, y)}{|x-y|^{N+sp}}\, dy,
\end{split}
\end{equation}
where 
\[
h(x, y)=J_p\big((M-\varepsilon)\,(\widetilde\Gamma(y)-\widetilde\Gamma(x))\big)-J_p\big(w(y)-(M-\varepsilon)\,\widetilde\Gamma(x)\big).
\]
As above, we now decompose the last integral in \eqref{unobis} as
\[
\int_{B_{R/\sqrt{\theta}}}\, dy=\int_{B_{R_0}}\, dy+\int_{B_{R}\setminus B_{R_0}}\, dy+\int_{B_{R/\sqrt{\theta}}\setminus B_{R}}\, dy,
\]
and proceed to estimate each term separately.
\par
Being $R_0=R_0(\varepsilon_0)$ and $h$ universally bounded, as before we get 
\begin{equation}
\label{duebis}
\int_{B_{R_0}}\frac{h(x, y)}{|x-y|^{N+sp}}\, dy\geq -\frac{C(\varepsilon_0)}{|x|^{N+s\,p}},
\end{equation}
where this time we used that (recall that we are assuming $R>R_0$)
\[
\Big||x|-R_0\Big|\ge \left(1-\frac{R_0}{R}\,\theta\right)\,|x|\ge (1-\theta)\,|x|,\qquad \mbox{ for }|x|>R/\theta.
\]
For $y\in B_{R}\setminus B_{ R_0}$ we have 
\[
h(x, y)=J_p\Big((M-\varepsilon)\,(\widetilde\Gamma(y)-\widetilde\Gamma(x))\Big)-J_p\Big((M-\varepsilon)\,(\widetilde\Gamma(y)-\widetilde\Gamma(x))+(\varepsilon+\varepsilon_0)\,\widetilde\Gamma(y)\Big),
\]
and by subaddivity of $\tau\mapsto \tau^{p-1}$, we get
\[
h(x, y)\geq -(\varepsilon+\varepsilon_0)^{p-1}\,\widetilde\Gamma(y)^{p-1}.
\]
Therefore, the analogue of \eqref{tre} is now 
\begin{equation}
\label{trebis}
\int_{B_{R}\setminus B_{ R_0}}\frac{h(x, y)}{|x-y|^{N+sp}}\, dy\geq-C\,(\varepsilon+\varepsilon_0)^{p-1}\frac{R^{s\,p}}{|x|^{N+s\,p}},  
\end{equation}
and again $C=C(N,s,p,M+m)>0$. For the previous estimate we also used that
\[
|x-y|\ge |x|-R\ge \left(1-\theta\right)\,|x|,\qquad \mbox{ for } |x|>R/\theta,\ |y|<R.
\] 
For $y\in B_{R/\sqrt{\theta}}\setminus B_{R }$ and $x\in B^c_{R/\theta}$ we have
\[
\begin{split}
h(x, y)&=J_p\Big(\underbrace{(M-\varepsilon)\,(\widetilde\Gamma(y)-\widetilde\Gamma(x))}_a\Big)\\
&-J_p\Big(\underbrace{(M-\varepsilon)\,(\widetilde\Gamma(y)-\widetilde\Gamma(x))}_a-\underbrace{\left(\frac{M-m}{2}-\varepsilon\right)\,\widetilde\Gamma(y)}_b\Big).
\end{split}
\]
Clearly
\[
0\leq a=(M-\varepsilon)\,(\widetilde\Gamma(y)-\widetilde\Gamma(x))\leq (M-\varepsilon)\,\widetilde \Gamma(y)=:A,
\]
so that \eqref{ineqb} provides
\[
h(x, y)\geq \max\left\{(M-\varepsilon)^{p-1}-\left(\frac{M+m}{2}\right)^{p-1},\,\left(\frac{M-m}{2}-\varepsilon\right)^{p-1}2^{1-p}\right\} \,\widetilde\Gamma(y)^{p-1}.
\]
Proceeding as for \eqref{quattro} and using
\[
|x-y|\leq 2\,|x|,\qquad \mbox{ for } x\in B^c_{R/\theta}, \ y\in B_{R/\sqrt{\theta}},
\]
we thus obtain
\begin{equation}
\label{quattrobis}
\int_{B_{R/\sqrt{\theta}}\setminus B_{R}}\frac{h(x, y)}{|x-y|^{N+s\,p}}\, dy \ge \frac{c}{|x|^{N+s\,p}}\int_{B_{R/\sqrt{\theta}}\setminus B_{R}} |y|^{s\,p-N}\, dy \ge c\,\frac{R^{s\,p}}{|x|^{N+s\,p}},
\end{equation}
for a small constant $c$ depending only on $M$ and $m$.
Gathering together the estimates \eqref{unobis}, \eqref{duebis}, \eqref{trebis} and \eqref{quattrobis}, we proved
\[
(-\Delta_p)^s w(x)\geq -\frac{C(\varepsilon_0)}{|x|^{N+s\,p}}+\Big(c-C\,(\varepsilon+\varepsilon_0)^{p-1}\Big)\,\frac{R^{s\,p}}{|x|^{N+s\,p}}.
\]
in $B_{R/\theta}^c$. We can thus choose $\varepsilon_0$ and $\varepsilon$ small enough so that the second term above is positive. For any such choice we have, for any $|x|>  R/\theta$,
\[
(-\Delta_p)^s w(x)\geq -\frac{C(\varepsilon_0)}{|x|^{N+s\,p}}+\frac{c}{2}\frac{R^{s\,p}}{|x|^{N+s\,p}},
\]
and for sufficiently large $R$ so that $c\,R^{s\,p}>4\,C(\varepsilon_0)$ it holds
\[
(-\Delta_p)^s w(x)\geq \frac{c}{4}\frac{R^{s\,p}}{|x|^{N+s\,p}}.
\]
By using Corollary \ref{lower-Dec} and the fact that $1<p<2$, for every $|x|\ge R/\theta$ we get
\[
(-\Delta_p)^s U(x)= U^{p^*-1}(x)\leq \frac{C}{|x|^{N+\frac{s\,p}{p-1}}}\leq \frac{C\,\theta^{s\,p\,\frac{2-p}{p-1}}}{R^{s\,p\,\frac{2-p}{p-1}}|x|^{N+s\,p}}.
\]
We thus conclude that $(-\Delta_p)^sU\leq (-\Delta_p)^s w$ in $\overline{B_{R/\theta}}^{\,c}$ for $R$ sufficiently large, as desired.
\end{proof}

\appendix

\section{Power functions}

\label{sec:appA}

\noindent
We have the following result on power functions.
\begin{lemma}
\label{lm:sobolevpovero}
Let $0<(N-s\,p)/p<\beta<N/(p-1)$. For every $R>0$, the function $x\mapsto |x|^{-\beta}$ belongs to $\widetilde D^{s,p}(\overline{B_R}^{\,c})$. 
\end{lemma}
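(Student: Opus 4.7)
The plan is to verify the three conditions defining membership in $\widetilde{D}^{s,p}(\overline{B_R}^{\,c})$ directly for $u(x):=|x|^{-\beta}$. Let us pick the enlarged set $E:=\overline{B_{R/2}}^{\,c}$, so that $E^c=\overline{B_{R/2}}$ is compact and $\mathrm{dist}(E^c,\overline{B_R}^{\,c})=R/2>0$. A scaling $x\mapsto x/R$ reduces the whole matter to the case $R=2$, so from now on I assume $E=\overline{B_1}^{\,c}$ and work with $u(x)=|x|^{-\beta}$.

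The easy conditions come first. The local integrability $u\in L^{p-1}_{\mathrm{loc}}(\mathbb{R}^N)$ is equivalent to $\beta(p-1)<N$, which is exactly the upper bound $\beta<N/(p-1)$. The integrability $u\in L^{p^*}(\overline{B_R}^{\,c})$ reduces, by radial computation, to $\beta\,p^*>N$, i.e.\ $\beta>(N-sp)/p$, which is the lower bound on $\beta$. Both hold by hypothesis.

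The real content is to bound the Gagliardo seminorm $[u]_{W^{s,p}(E)}$. The idea is the standard splitting of the double integral according to whether $(x,y)$ lies in the near-diagonal region $\mathcal{A}:=\{|x-y|\le |x|/2\}\cap (E\times E)$ or in its complement $\mathcal{B}$ inside $E\times E$. On $\mathcal{A}$, one has $|y|\sim |x|$ and by the mean value theorem applied to $t\mapsto t^{-\beta}$,
\[
|u(x)-u(y)|\le C\,\min(|x|,|y|)^{-\beta-1}\,|x-y|\le C\,|x|^{-\beta-1}\,|x-y|.
\]
Plugging this in and performing the $y$-integration (the inner integral $\int_{|h|\le|x|/2}|h|^{\,p-N-sp}\,dh$ converges because $s<1$ yields an exponent $>-N$, producing $C|x|^{p-sp}$), one is left with
\[
\iint_{\mathcal{A}}\frac{|u(x)-u(y)|^p}{|x-y|^{N+sp}}\,dx\,dy\le C\int_{|x|>1}|x|^{-\beta p-sp}\,dx,
\]
which is finite precisely because $\beta>(N-sp)/p$. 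On $\mathcal{B}$, I use the crude bound $|u(x)-u(y)|^p\le 2^{p-1}(|x|^{-\beta p}+|y|^{-\beta p})$, symmetry in $(x,y)$, and the computation $\int_{|h|>|x|/2}|h|^{-N-sp}\,dh=C|x|^{-sp}$, leading to the same tail integral $\int_{|x|>1}|x|^{-\beta p-sp}\,dx<\infty$.

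The only step that needs some care is verifying that the exponents in the inner integrations over $y$ (in both $\mathcal{A}$ and $\mathcal{B}$) fall in the right range: near-diagonal integrability uses $s<1$, while the tail convergence in $x$ uses exactly the lower bound $\beta>(N-sp)/p$. I expect no subtleties beyond this bookkeeping; no comparison principles or nonlocal estimates are needed, because all we are proving is that a pure power sits in the Tilde space.
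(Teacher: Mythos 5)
Your argument is correct in substance but follows a genuinely different route from the paper's. The paper passes to polar coordinates, reduces the double integral to a one-dimensional integral in the ratio $\xi = t/\varrho \in (0,1)$ weighted by the sphere-pairing integral $\int_{\mathbf{S}^{N-1}\times\mathbf{S}^{N-1}} |\omega_1-\xi\omega_2|^{-N-sp}\,d\omega_1\,d\omega_2$, and then proves the key estimate that this sphere integral is $O\big((1-\xi)^{-1-sp}\big)$. This is a fairly delicate but explicit computation, and it sets up the same $\Phi(\varrho)$ machinery used again in Lemma \ref{power-lemma} to compute $(-\Delta_p)^s|x|^{-\beta}$; the polar-coordinate route thus does double duty. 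Your approach is the standard near-diagonal/far-field dyadic splitting, with the mean-value theorem giving $|u(x)-u(y)| \lesssim |x|^{-\beta-1}|x-y|$ on $\{|x-y|\le|x|/2\}$ and a crude $L^\infty$-type bound off the diagonal. It is more elementary, uses no structure of the sphere, and would work verbatim for any radially monotone power-like profile; on the other hand, it cannot be recycled for the explicit computation of $C(\beta)$ in the next lemma.

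One point you should tighten: your set $\mathcal{B}=\{|x-y|>|x|/2\}$ is not symmetric in $(x,y)$, so the phrase ``symmetry in $(x,y)$'' does not directly dispatch the $|y|^{-\beta p}$ term after you split $|u(x)-u(y)|^p \le 2^{p-1}(|x|^{-\beta p}+|y|^{-\beta p})$. The cheapest fix is to symmetrize the decomposition from the outset, e.g.\ take $\mathcal{A}=\{|x-y|\le \tfrac12\min(|x|,|y|)\}$, and then by the genuine $x\leftrightarrow y$ symmetry of the Gagliardo integrand restrict to $|x|\le|y|$, where on $\mathcal{B}$ one simply has $|u(x)-u(y)|\le|x|^{-\beta}$ with $|x|=\min(|x|,|y|)$. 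Alternatively, one can check directly that your $\mathcal{B}$ actually satisfies $|x-y|>\tfrac14\max(|x|,|y|)$, from which both terms yield $\int_{|x|>1}|x|^{-\beta p-sp}\,dx<\infty$. Either patch is a few lines; the rest of the argument (both integrability conditions, the convergence of the inner integral via $s<1$, and the tail via $\beta>(N-sp)/p$) is sound.
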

\begin{proof} 
A direct computation shows that $x\mapsto |x|^{-\beta}$ belongs to $L^{p-1}_{\mathrm{loc}}(\mathbb{R}^N)\cap L^{p^*}(B_R^c)$, 
when $\beta$ is as in the statement. We take $r<R$, then $E=\overline{B_r}^{\,c}\supset \overline{B_R}^{\,c}$ and we need to show
\begin{equation}
\label{claimnps}
\Big[|x|^{-\beta}\Big]_{W^{s,p}(B_r^c)}<+\infty,\qquad \text{for }\,\, \frac{N-s\,p}{p}<\beta.
\end{equation}
We compute in polar coordinates
\[
\begin{split}
\int_{B_r^c\times B_r^c}&\frac{||x|^{-\beta}-|y|^{-\beta}|^p}{|x-y|^{N+sp}}\, dx\, dy= \int_{\mathbf{S}^{N-1}\times \mathbf{S}^{N-1}}\int_{r}^{+\infty}\int_r^{+\infty}\frac{|\varrho^{-\beta}-t^{-\beta}|^p\,\varrho^{N-1}\,t^{N-1}}{|\varrho\,\omega_1-t\,\omega_2|^{N+sp}}\, d\varrho\, dt\, d\omega_1\, d\omega_2\\
&=2\int_{r}^{+\infty}\frac{\varrho^{-\beta\, p}\,\varrho^{2\,N-2}}{\varrho^{N+sp}}\int_r^{\varrho}\left|1-\left(\frac{t}{\varrho}\right)^{-\beta}\right|^p\int_{\mathbf{S}^{N-1}\times \mathbf{S}^{N-1}}\frac{d\omega_1\, d\omega_2}{|\omega_1-(t\,\omega_2)/\varrho|^{N+s\,p}}\left(\frac{t}{\varrho}\right)^{N-1}\, dt\, d\varrho\\
&=2\int_r^{+\infty}\frac{\varrho^{-\beta\, p}\,\varrho^{2\,N-1}}{\varrho^{N+s\,p}}\int_{r/\varrho}^1|1-\xi^{-\beta}|^p\,\xi^{N-1}\int_{\mathbf{S}^{N-1}\times \mathbf{S}^{N-1}}\frac{d\omega_1\, d\omega_2}{|\omega_1-\xi\,\omega_2|^{N+s\,p}}\, d\xi\, d\varrho.
\end{split}
\]
Let us now prove that for $0<\xi<1$ it holds
\[
\int_{\mathbf{S}^{N-1}\times \mathbf{S}^{N-1}}\frac{d\omega_1\, d\omega_2}{|\omega_1-\xi\,\omega_2|^{N+s\,p}}\leq \frac{C}{(1-\xi)^{1+s\,p}}.
\]
Without loss of generality, we may assume that $\xi\geq 1/2$, since for $0<\xi<1/2$ the integral
is uniformly bounded.
By rotational invariance, we have
\[
\int_{\mathbf{S}^{N-1}\times \mathbf{S}^{N-1}}\frac{d\omega_1\, d\omega_2}{|\omega_1-\xi\,\omega_2|^{N+s\,p}}=
|\mathbf{S}^{N-1}|\int_{\mathbf{S}^{N-1}}\frac{d\omega_2}{|{\bf e}_1-\xi\,\omega_2|^{N+s\,p}},
\]
where ${\bf e}_1=(1,0,\dots,0)$. By changing variable $\omega_2=(t,z)$ with 
$$
t=\pm\sqrt{1-|z|^2},\qquad z\in B_1'\subset \mathbb{R}^{N-1},
$$ 
we  therefore get (the constant $C$ may vary from a line to another)
\begin{align*}
\int_{\mathbf{S}^{N-1}}\frac{d\omega_2}{|{\bf e}_1-\xi\,\omega_2|^{N+s\,p}}&= \int_{\mathbf{S}^{N-1}\setminus B_1({\bf e_1})}\frac{d\omega_2}{|{\bf e}_1-\xi\,\omega_2|^{N+s\,p}}+\int_{\mathbf{S}^{N-1}\cap B_1({\bf e_1})}\frac{d\omega_2}{|{\bf e}_1-\xi\,\omega_2|^{N+s\,p}}\\
&\leq C\left(1+ \int_{B_1'}\frac{dz}{((1-\xi\, t)^2+\xi^2\,|z|^2)^{\frac{N+s\,p}{2}}}\right) \\
&\leq C\left(1+\int_{B_1'}\frac{dz}{((1-\xi)^2+\xi^2\,|z|^2)^{\frac{N+s\,p}{2}}} \right)\\
&\leq C\left(1+\frac{1}{(1-\xi)^{1+s\,p}}\int_{B_{\frac{\xi}{1-\xi}}'}\frac{1}{
(1+|y|^2)^{\frac{N+s\,p}{2}}}dy\right)\\
&\leq C\left(1+\frac{1}{(1-\xi)^{1+s\,p}}\int_{\mathbb{R}^{N-1}}\frac{1}{
(1+|y|^2)^{\frac{N+s\,p}{2}}}dy\right)
\end{align*}
which proves the claim. Taking into account that for $0< \xi <1$ it also holds
\[
\frac{|1-\xi^{-\beta}|^p}{|1-\xi|^{1+s\,p}}\leq C\,(\xi^{-\beta\, p}+|1-\xi|^{p\,(1-s)-1})
\]
we therefore get
\[
\Big[|x|^{-\beta}\Big]_{W^{s,p}(B_r^c)}^p\leq C\int_r^{+\infty}\varrho^{N-1-p\,(s+\beta)}\,d\varrho\int_{r/\varrho}^1  \xi^{N-1}\,\big(\xi^{-\beta\, p}+|1-\xi|^{p\,(1-s)-1}\big)\, d\xi.
\]
All the integrals are now explicitly computable and one can readily get \eqref{claimnps}.
\end{proof}

\begin{lemma}
\label{power-lemma}
Let $0<(N-s\,p)/p<\beta<N/(p-1)$. For every $R>0$, it holds
\[
(-\Delta_p)^s |x|^{-\beta}=C(\beta)\, |x|^{-\beta\,(p-1)-s\,p}\quad \mbox{ weakly in } \overline{B_R}^{\,c},
\]
where the constant $C(\beta)$ is given by
\begin{equation}
\label{formulaC}
C(\beta)=2\,\int_0^1 \varrho^{s\,p-1}\,\left[1-\varrho^{N-s\,p-\beta\,(p-1)}\right]\,\left|1-\varrho^{\beta}\right|^{p-1}\, \Phi(\varrho)\,d\varrho,
\end{equation}
and
\begin{equation}
\label{fihona}
\Phi(\varrho)=\mathcal{H}^{N-2}(\mathbf{S}^{N-2})\,\int_{-1}^1 \frac{(1-t^2)^\frac{N-3}{2}}{\Big(1-2\,t\,\varrho+\varrho^2\Big)^\frac{N+s\,p}{2}}\, dt.
\end{equation}
\end{lemma}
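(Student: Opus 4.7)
The plan is to deduce the formula from two separate computations: a pointwise principal-value evaluation of $(-\Delta_p)^s |x|^{-\beta}$ at any $x\ne 0$, obtained by scaling and rotational invariance, followed by a density argument identifying the distribution $(-\Delta_p)^s u$ supplied by Proposition \ref{deltaps} with the resulting pointwise function. Note that $u(x)=|x|^{-\beta}$ belongs to $\widetilde{D}^{s,p}(\overline{B_R}^{\,c})$ by Lemma \ref{lm:sobolevpovero}, so the left-hand side is a well-defined element of $(D^{s,p}_0(\overline{B_R}^{\,c}))^*$.

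\emph{Reduction by scaling.} For fixed $x\ne 0$, change variables $y=|x|\,z$ in the singular integral defining $(-\Delta_p)^s u(x)$. The kernel $|x-y|^{-(N+sp)}$ scales like $|x|^{-(N+sp)}$, the factor $|x|^N$ comes from $dy$, and $J_p(|x|^{-\beta}-|y|^{-\beta})=|x|^{-\beta(p-1)}J_p(1-|z|^{-\beta})$. This produces the factor $|x|^{-\beta(p-1)-sp}$ and reduces the principal value to one centered at $x/|x|$. The rotational invariance of the measure on $\mathbb{R}^N$ and of $|e-z|^{N+sp}$ under simultaneous rotation of $e$ and $z$ lets us replace $x/|x|$ by $\mathbf e_1$. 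Thus
\[
(-\Delta_p)^s u(x)=C(\beta)\,|x|^{-\beta(p-1)-sp},\qquad C(\beta):=2\,\mathrm{PV}\int_{\mathbb{R}^N}\frac{J_p(1-|z|^{-\beta})}{|\mathbf e_1-z|^{N+sp}}\,dz,
\]
provided the latter principal value exists.

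\emph{Identification of $C(\beta)$.} Write $z=\varrho\,\omega$, $\omega\in\mathbf{S}^{N-1}$, and integrate the angular variable first. Using the standard parameterization $\omega=(t,\sqrt{1-t^2}\,\eta)$ with $\eta\in\mathbf{S}^{N-2}$ and $|\mathbf e_1-\varrho\omega|^2=1-2t\varrho+\varrho^2$, the angular integral yields exactly $\Phi(\varrho)$ from \eqref{fihona}. The $\varrho$-integral on $(0,+\infty)$ is then split at $\varrho=1$. On the piece $(1,+\infty)$ I substitute $\varrho=1/\tau$ and exploit the key identity
\[
\Phi(1/\tau)=\tau^{N+sp}\,\Phi(\tau),\qquad \tau\in(0,1),
\]
which follows at once by factoring $\tau^2$ out of $\tau^2-2t\tau+1$ in the denominator. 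Combining this with
\[
J_p(1-\varrho^{-\beta})=-\varrho^{-\beta(p-1)}\,|1-\varrho^\beta|^{p-1},\qquad J_p(1-\tau^\beta)=|1-\tau^\beta|^{p-1},\qquad 0<\varrho,\tau<1,
\]
the two half-line integrals fuse into a single integral on $(0,1)$, and the $\varrho$-factors combine to give $\tau^{sp-1}\bigl(1-\tau^{N-sp-\beta(p-1)}\bigr)$. This is precisely formula \eqref{formulaC}.

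\emph{Pointwise to weak.} To conclude in weak sense on $\overline{B_R}^{\,c}$, test against $\varphi\in C^\infty_c(\overline{B_R}^{\,c})$; by Theorem \ref{density} this suffices. One writes the bilinear form in Proposition \ref{deltaps} as a symmetric double integral, exploits the integrability controls already established therein to apply Fubini and symmetrize, and identifies the inner integral with the pointwise value $C(\beta)|x|^{-\beta(p-1)-sp}$ almost everywhere via the computation above.

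The main technical obstacle is the principal value near the diagonal $\varrho=1$, $\omega=\mathbf e_1$: the kernel has an order $|\mathbf e_1-z|^{-(N+sp)}$ singularity while $J_p(1-|z|^{-\beta})$ vanishes of order $|1-|z||^{p-1}$. The net integrand behaves like $(1-\varrho)^{p-sp-1}$, which is integrable precisely because $s<1$; the vanishing of the bracket $1-\tau^{N-sp-\beta(p-1)}$ at $\tau=1$ provides the extra $(1-\tau)$ factor needed for absolute integrability of \eqref{formulaC}. Verifying these cancellations (together with integrability at $\tau\to 0$, which is ensured by $sp>0$ and the assumption $\beta<N/(p-1)$) is what makes the interchange of limits in the reduction legal and yields the clean closed form for $C(\beta)$.
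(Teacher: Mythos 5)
Your overall strategy coincides with the paper's: scaling and rotational invariance to reduce to a radial principal value, angular integration to produce $\Phi$, the inversion $\varrho\mapsto 1/\varrho$ together with the identity $\Phi(1/\varrho)=\varrho^{N+sp}\Phi(\varrho)$, and fusion of the two half-line integrals into \eqref{formulaC}. Where you are too brisk is precisely the step you yourself flag as the obstacle, and it contains a genuine gap. The inner integral $\int_{\mathbb{R}^N}J_p(|x|^{-\beta}-|y|^{-\beta})\,|x-y|^{-N-sp}\,dy$ is \emph{not} absolutely convergent in general: near $y=x$ the integrand is of order $|x-y|^{p-1-N-sp}$, locally integrable only when $p(1-s)>1$, so ``apply Fubini and symmetrize'' does not produce a pointwise Lebesgue value of $(-\Delta_p)^su$. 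Correspondingly, each of the two half-line pieces of your radial integral individually behaves like $(1-\varrho)^{p-2-sp}$ near $\varrho=1$ --- not $(1-\varrho)^{p-sp-1}$ as you wrote, which is the exponent only \emph{after} fusion --- and hence each piece diverges when $p(1-s)\le 1$. The cancellation making the PV finite is therefore essential, not just a bookkeeping device ensuring integrability of the final formula.

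The paper resolves both difficulties with a single device which your proof omits: it truncates the absolutely convergent double integral (against $\varphi(x)-\varphi(y)$) over the complement of the symmetric conical set $\mathcal O_\delta=\{(x,y):(1-\delta)|x|\le|y|\le(1+\delta)|x|\}$. Because $\mathcal O_\delta$ is symmetric in $(x,y)$, the truncated double integral can be legitimately symmetrized; the resulting inner integral $f_\delta(x)$, taken over $\mathcal K_\delta(x)^c$, is now absolutely convergent and homogeneous of degree $-\beta(p-1)-sp$, giving $C(\beta;\delta)=2\int_{|\varrho-1|\ge\delta}\varrho^{N-1}J_p(1-\varrho^{-\beta})\,\Phi(\varrho)\,d\varrho$. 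The outer limit $\delta\to 0$ is handled by dominated convergence. The inner limit $C(\beta;\delta)\to C(\beta)$ is where the quantitative bounds $\Phi(\varrho)\le C(1-\varrho)^{-1-sp}$ and $\Phi(\varrho)\le C$ for $\varrho$ bounded away from $1$ are needed: after the substitution $\varrho\mapsto 1/\varrho$ the excluded intervals do not align, leaving a residual $\int_{1-\delta}^{1/(1+\delta)}\varrho^{sp-1}(1-\varrho^\beta)^{p-1}\Phi(\varrho)\,d\varrho$ whose vanishing must be verified (it is of order $\delta^{p-1-sp}$ or $\delta^{p(1-s)}$ depending on the sign of $p-1-sp$). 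You should incorporate this regularization explicitly; without it the symmetrization and the fusion steps are not justified.
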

\begin{proof}
Observe that
$$
|x|^{-\beta\,(p-1)-s\,p} \in L^{(p*)'}(B_R^c), \,\,\quad\text{for any $\beta>(N-s\,p)/p$}.
$$
Then, by Theorem \ref{density} and Proposition \ref{deltaps} it suffices to show that
\[
\int_{\mathbb{R}^{2N}}\frac{J_p(|x|^{-\beta}-|y|^{-\beta})}{|x-y|^{N+s\,p}}\,(\varphi(x)-\varphi(y))\,dx\,dy=C(\beta)\,\int_\Omega |x|^{-\beta\,(p-1)-s\,p}\,\varphi\,dx,
\]
for an arbitrary $\varphi\in C^{\infty}_c(\overline{B_R}^{\,c})$. For every such a $\varphi$ we consider the double integral
\[
\begin{split}
\int_{\mathbb{R}^{2N}}\frac{J_p(|x|^{-\beta}-|y|^{-\beta})}{|x-y|^{N+s\,p}}\,(\varphi(x)-\varphi(y))\,dx\,dy.
\end{split}
\]
We observe that this is absolutely convergent, indeed
\[
\begin{split}
\int_{\mathbb{R}^{2N}}&\frac{|J_p(|x|^{-\beta}-|y|^{-\beta})|}{|x-y|^{N+s\,p}}\,|\varphi(x)-\varphi(y)|\,dx\,dy\\
&=\int_{B_R^c\times B_R^c}\frac{|J_p(|x|^{-\beta}-|y|^{-\beta})|}{|x-y|^{N+s\,p}}\,|\varphi(x)-\varphi(y)|\,dx\,dy\\
&\quad +2\,\int_{B_R} \int_{\mathrm{supp}(\varphi)} \frac{|J_p(|x|^{-\beta}-|y|^{-\beta})|}{|x-y|^{N+s\,p}}\,|\varphi(y)|\,dx\,dy\\
&\le \Big[|x|^{-\beta}\Big]_{W^{s,p}(B_R^c)}\,[\varphi]_{W^{s,p}(B_R^c)}+C\,\|\varphi\|_{L^\infty}\,|\mathrm{supp}(\varphi)|\,\int_{B_R} |x|^{-\beta\,(p-1)}\,dx,
\end{split}
\]
and both terms are finite, thanks to Lemma \ref{lm:sobolevpovero}.
For $\delta>0$ we consider the conical set
\[
\mathcal{O}_\delta=\{(x,y)\in\mathbb{R}^{2\,N}\, :\, (1-\delta)\,|x|\le |y|\le (1+\delta)\,|x|\},
\]
then by the Dominated Convergence Theorem
\[
\begin{split}
\lim_{\delta\searrow 0}\int_{\mathcal{O}_\delta^c}& \frac{J_p(|x|^{-\beta}-|y|^{-\beta})}{|x-y|^{N+s\,p}}\,(\varphi(x)-\varphi(y))\,dy\,dx\\
&=\int_{\mathbb{R}^{2N}}\frac{J_p(|x|^{-\beta}-|y|^{-\beta})}{|x-y|^{N+s\,p}}\,(\varphi(x)-\varphi(y))\,dx\,dy.
\end{split}
\]
We now observe that
\[
\begin{split}
\int_{\mathcal{O}_\delta^c}& \frac{J_p(|x|^{-\beta}-|y|^{-\beta})}{|x-y|^{N+s\,p}}\,(\varphi(x)-\varphi(y))\,dy\,dx=2\,\int_{\mathbb{R}^N}\left(\int_{\mathcal{K}_\delta(x)^c} \frac{J_p(|x|^{-\beta}-|y|^{-\beta})}{|x-y|^{N+s\,p}}\,dy\right)\,\varphi(x)\,dx,
\end{split}
\]
where for every $x\in\mathbb{R}^N$
\[
\mathcal{K}_\delta(x)=\{y\in\mathbb{R}^N\, :\, (1-\delta)\,|x|\le |y|\le (1+\delta)\,|x|\},
\]
and of course $\mathcal{K}_\delta(x)=\mathcal{K}_\delta(x')$ whenever $|x|=|x'|$.
We set
\[
f_\delta(x)=2\,\int_{\mathcal{K}_\delta(x)^c} \frac{J_p(|x|^{-\beta}-|y|^{-\beta})}{|x-y|^{N+s\,p}}\,dy,\qquad x\in\mathbb{R}^N\setminus\{0\},
\]
it is easily seen that $f_\delta$ is a radial function, homogeneous of degree $-\beta\,(p-1)-s\,p$ (see \cite[Lemma 6.2]{Brasco}). Thus for $x\not=0$ we have
\begin{equation}
\label{pseudohomo}
f_\delta(x)=|x|^{-\beta\,(p-1)-s\,p}\,f_{\delta}(\omega),\qquad \mbox{ for }\quad \omega=\frac{x}{|x|}\in\mathbf{S}^{N-1}.
\end{equation}
	We set
	\[
	C(\beta;\delta):=f_{\delta}(\omega)=2\,\int_{\mathcal{K}_\delta(\omega)^c} \frac{J_p(1-|y|^{-\beta})}{|\omega-y|^{N+s\,p}}\,dy,\qquad \omega\in\mathbf{S}^{N-1},
	\]
	which is independent of the direction $\omega$, by radiality of $f_\delta$. By taking the average over $\mathbf{S}^{N-1}$ and
	proceeding as in \cite[Lemma B.2]{Brasco}, we get
	\[
	C(\beta;\delta)=2\,\int_{|\varrho-1|\ge \delta} \varrho^{N-1}\,|1-\varrho^{-\beta}|^{p-2}\,(1-\varrho^{-\beta})\,\Phi(\varrho)\,d\varrho,
	\]
where $\Phi$ is defined in \eqref{fihona}. We now decompose the integral defining $C(\beta;\delta)$ and perform a change of variables, i.e.
	\[
	\begin{split}
	C(\beta;\delta)&=-2\,\int_0^{1-\delta} \varrho^{N-1}\,|1-\varrho^{-\beta}|^{p-1}\,\Phi(\varrho)\,d\varrho+2\,\int_{1+\delta}^\infty \varrho^{N-1}\,|1-\varrho^{-\beta}|^{p-1}\,\Phi(\varrho)\,d\varrho\\
	&=-2\,\int_0^{1-\delta} \varrho^{N-1-\beta\,(p-1)}\,|\varrho^{\beta}-1|^{p-1}\,\Phi(\varrho)\,d\varrho\\
	&\quad +2\,\int_0^{1/(1+\delta)} \varrho^{-N-1}\,|1-\varrho^{\beta}|^{p-1}\,\Phi(1/\varrho)\,d\varrho.
	\end{split}
	\]
Finally, observe that
\[
\Phi(1/\varrho)=\varrho^{N+s\,p}\,\Phi(\varrho),
\]
thus the quantity $C(\beta;\delta)$ can be written as
\begin{equation}
\label{bikini}
\begin{split}
C(\beta;\delta)&=2\,\int_0^{1-\delta} \left(1-\varrho^{N-s\,p-\beta\,(p-1)}\right)\,\varrho^{s\,p-1}\,(1-\varrho^{\beta})^{p-1}\,\Phi(\varrho)\,d\varrho\\
	&\quad +2\,\int_{1-\delta}^{1/(1+\delta)} \varrho^{s\,p-1}\,(1-\varrho^{\beta})^{p-1}\,\Phi(\varrho)\,d\varrho.
\end{split}
\end{equation}
Recall that $\varphi$ is compactly supported in $\overline{B_R}^{\,c}$, thus by using \eqref{pseudohomo} we can estimate
\[
\begin{split}
\left|\int_{\Omega} f_\delta\,\varphi\,dx-C(\beta)\,\int_\Omega |x|^{-\beta\,(p-1)-s\,p}\,\varphi\,dx\right|&\le \|\varphi\|_\infty\,R^{-\beta\,(p-1)-s\,p}\,|\mathrm{supp}(\varphi)|\,\Big|C(\beta;\delta)-C(\beta)\Big|.
\end{split}
\]
In order to prove that $C(\beta;\delta)$ converges to $C(\beta)$ as $\delta$ goes to $0$, we decompose the function $\Phi$ defined in \eqref{fihona} as follows
\[
\begin{split}
\Phi(\varrho)&=\int_{-1}^{1/2} \frac{(1-t^2)^\frac{N-3}{2}}{(1-2\,t\,\varrho+\varrho^2)^\frac{N+s\,p}{2}}\, dt+\int_{1/2}^{1} \frac{(1-t^2)^\frac{N-3}{2}}{(1-2\,t\,\varrho+\varrho^2)^\frac{N+s\,p}{2}}\, dt=:\Phi_1(\varrho)+\Phi_2(\varrho),
\end{split}
\]
where we omitted the dimensional constant $\mathcal{H}^{N-2}(\mathbf{S}^{N-2})$ for simplicity. Let us start estimating $\Phi_1$. If we use that 
\[
1-2\,t\,\varrho+\varrho^2=(\varrho-t)^2+(1-t^2)\ge \frac{3}{4},\qquad \mbox{ if }-1\le t\le\frac{1}{2},
\]
we get
\begin{equation}
\label{phi1}
0\le\Phi_1(\varrho)\le C,\qquad 0<\varrho<1.
\end{equation}
We now consider $\Phi_2(\varrho)$, discussing separately the cases $0<\varrho<1/2$ and $1/2\le \varrho <1$. We observe that for $0<\varrho<1/2$ we have
\[
1-2\,t\,\varrho+\varrho^2=(1-\varrho)^2+2\,\varrho\,(1-t)\ge \frac{1}{4},\qquad \mbox{ if } \frac{1}{2}\le t\le 1.
\]
Then we get again
\begin{equation}
\label{phi2}
0\le\Phi_2(\varrho)\le C,\qquad \mbox{ if } 0<\varrho<\frac{1}{2}.
\end{equation}
We are left with the term $\Phi_2(\varrho)$ for $1/2\le \varrho<1$. With simple manipulations\footnote{We use the change of variables
\(
\tau=\frac{2\,\varrho}{(1-\varrho)^2}\,(1-t).
\)
} 
we can write it as
\[
\Phi_2(\varrho)=\frac{(2\,\varrho)^{-\frac{N-1}{2}}}{(1-\varrho)^{1+s\,p}}\int_{0}^{\frac{\varrho}{(1-\varrho)^2}} \frac{\left(2-\frac{(1-\varrho)^2}{2\,\varrho}\,\tau\right)^\frac{N-3}{2}\,\tau^\frac{N-3}{2}}{(1+\tau)^\frac{N+s\,p}{2}}\, d\tau.
\]
In particular, we get
\begin{equation}
\label{phi22}
0\le\Phi_2(\varrho)\le C\,(1-\varrho)^{-1-s\,p},\qquad \mbox{ if }\frac{1}{2}\le\varrho<1.
\end{equation}
By using \eqref{phi1}, \eqref{phi2} and \eqref{phi22}, we thus obtain for the first integral in \eqref{bikini}
\[
\lim_{\delta\searrow 0}2\,\int_0^{1-\delta} \left(1-\varrho^{N-s\,p-\beta\,(p-1)}\right)\,\varrho^{s\,p-1}\,(1-\varrho^{\beta})^{p-1}\,\Phi(\varrho)\,d\varrho=C(\beta),
\]
and observe that the latter is finite, thanks to \eqref{phi22}. It is only left to show that the other integral in \eqref{bikini} converges to $0$. Still by \eqref{phi1} and \eqref{phi22}, we obtain
\[
\begin{split}
\lim_{\delta\searrow 0}\int_{1-\delta}^{1/(1+\delta)} &\varrho^{s\,p-1}\,(1-\varrho^{\beta})^{p-1}\,\Phi(\varrho)\,d\varrho\\
&\le C\,\lim_{\delta\searrow 0} \int_{1-\delta}^{1/(1+\delta)} \varrho^{s\,p-1}\,(1-\varrho^{\beta})^{p-1}\,d\varrho\\
&+C\,\lim_{\delta\searrow 0} \int_{1-\delta}^{1/(1+\delta)} \varrho^{s\,p-1}\,(1-\varrho^{\beta})^{p-1}\,(1-\varrho)^{-1-s\,p}\,d\varrho\\
&\le C\,\lim_{\delta\searrow 0} \int_{1-\delta}^{1/(1+\delta)} (1-\varrho)^{p-2-s\,p}\,d\varrho\\
&= \frac{C}{p-1-s\,p}\,\lim_{\delta\searrow 0}\,\left[-\left(\frac{\delta}{1+\delta}\right)^{p-1-s\,p}+\delta^{p-1-s\,p}\right],
\end{split}
\]
where we assumed for simplicity that $p-1-s\,p\not=0$.
If $p-1-s\,p>0$, the last term converges to $0$. If $p-1-s\,p<0$, we have
\[
\begin{split}
\left(\frac{\delta}{1+\delta}\right)^{p-1-s\,p}-\delta^{p-1-s\,p}&= \delta^{p-1-s\,p}\,\Big[(1+\delta)^{s\,p+1-p}-1\Big]\\
&\simeq (s\,p+1-p)\,\delta^{p-s\,p},\qquad \mbox{ as } \delta \searrow 0,
\end{split}
\]
and thus the integral converges to $0$ again. Finally, the borderline case $p-1-s\,p=0$ is treated similarly, we leave the details to the reader.
\par
In conclusion, we get
\[
\lim_{\delta\searrow 0}\int_{\Omega} f_\delta\,\varphi\,dx=C(\beta)\,\int_\Omega |x|^{-\beta\,(p-1)-s\,p}\,\varphi\,dx,
\]
as desired.
\end{proof}
\begin{remark}
The previous result was proved in \cite[Lemma 3.1]{FS} for the limit case $\beta=(N-s\,p)/p$. Our argument is different, since we rely on elementary estimates for the function $\Phi$, rather than on special properties of hypergeometric and beta functions like in \cite{FS}. 
\end{remark}
\noindent
Observe that the choice $\beta=(N-s\,p)/(p-1)$ is feasible in the previous results, since
\[
\frac{N-s\,p}{p}<\frac{N-s\,p}{p-1}<\frac{N}{p-1}.
\]
Moreover, with such a choice we have $C(\beta)=0$ in \eqref{formulaC}.
Then from Lemmas \ref{lm:sobolevpovero} and \ref{power-lemma},
we get the following.

\begin{theorem}
\label{fundamental}
	For any $R>0$, $\Gamma(x)=|x|^{-\frac{N-s\,p}{p-1}}$ belongs to $\widetilde{D}^{s,p}(\overline{B_R}^{\,c})$ and weakly solves $(-\Delta_p)^su=0$ in  $\overline{B_R}^{\,c}$.
\end{theorem}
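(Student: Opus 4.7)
The plan is to invoke Lemmas \ref{lm:sobolevpovero} and \ref{power-lemma} with the specific exponent $\beta=(N-s\,p)/(p-1)$, reducing the whole statement to a routine verification.

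First I would check that this $\beta$ lies in the admissible range $(N-s\,p)/p<\beta<N/(p-1)$ required by both lemmas. The left inequality follows from $p-1<p$ together with $N-s\,p>0$, and the right inequality is immediate from $s\,p>0$. Hence Lemma \ref{lm:sobolevpovero} applies and delivers $\Gamma\in \widetilde{D}^{s,p}(\overline{B_R}^{\,c})$ for every $R>0$, which is the first half of the conclusion.

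Next I would apply Lemma \ref{power-lemma} to obtain
\[
(-\Delta_p)^s\Gamma=C(\beta)\,|x|^{-\beta(p-1)-s\,p}\qquad\text{weakly in }\overline{B_R}^{\,c}.
\]
The key observation is that the chosen $\beta$ satisfies $\beta(p-1)=N-s\,p$, so the exponent $N-s\,p-\beta(p-1)$ appearing in formula \eqref{formulaC} vanishes. Consequently the bracket $1-\varrho^{N-s\,p-\beta(p-1)}$ in the integrand of $C(\beta)$ is identically zero on $(0,1)$, which forces $C(\beta)=0$. The stated weak equation $(-\Delta_p)^s u=0$ then follows.

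There is no real obstacle here: the whole content of the theorem has already been done in the two preceding lemmas, and what remains is the dimensional bookkeeping of checking the inclusion $(N-s\,p)/p<(N-s\,p)/(p-1)<N/(p-1)$ and the algebraic identity $\beta(p-1)=N-s\,p$ that kills the constant $C(\beta)$. If any care is needed, it is only in pointing out that the representation \eqref{formulaC} is valid precisely on the open range used in Lemma \ref{power-lemma}, and that $\beta=(N-s\,p)/(p-1)$ is an interior point of this range, so no limiting argument is required.
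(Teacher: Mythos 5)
Your proposal is correct and matches the paper's argument exactly: the paper also observes that $\beta=(N-s\,p)/(p-1)$ lies in the admissible interval $((N-s\,p)/p,\,N/(p-1))$, invokes Lemma \ref{lm:sobolevpovero} for the membership in $\widetilde{D}^{s,p}(\overline{B_R}^{\,c})$, and uses the identity $\beta(p-1)=N-s\,p$ to conclude that the bracket $1-\varrho^{N-s\,p-\beta(p-1)}$ in \eqref{formulaC} vanishes, so $C(\beta)=0$ in Lemma \ref{power-lemma}. Nothing is missing.
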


\vskip15pt

\end{document}